\documentclass[11pt, a4paper]{amsart}
\usepackage[utf8]{inputenc}
\usepackage{amsmath}
\usepackage{amssymb}
\usepackage{amsfonts}
\usepackage{amscd}
\usepackage{enumerate}
\usepackage[margin = 0.9in]{geometry}
\usepackage{amsthm}
\usepackage{mathrsfs}
\usepackage{mathtools}
\usepackage{amssymb}
\usepackage[all]{xy}
\usepackage{xcolor}
\usepackage{graphics}
\usepackage{lscape}
\usepackage{array}
\usepackage{microtype}
\usepackage{setspace}
\usepackage{adjustbox}
\usepackage{palatino}
\usepackage{eulervm}
\usepackage{parskip}
\usepackage{marvosym}
\usepackage{tikz-cd} 
\usetikzlibrary{graphs,decorations.pathmorphing,decorations.markings}
\usepackage{stmaryrd} 
\usepackage{upgreek} 
\usepackage{xcolor}
\usepackage{centernot} 
\usepackage[utf8]{inputenc}
\usepackage{comment}
\usepackage[shortlabels]{enumitem}
\usepackage{xy}

\makeatletter

\def\l@section{\@tocline{1}{0pt}{0pt}{2.3em}{}}
\def\l@subsection{\@tocline{2}{0pt}{2em}{3em}{}}
\def\l@subsubsection{\@tocline{3}{0pt}{4em}{4em}{}}

\makeatother

\numberwithin{equation}{section}
\newtheorem*{theorem*}{Theorem}
\newtheorem*{definition*}{Definition}
\newtheorem*{theorem_A}{Theorem A}
\newtheorem*{theorem_B}{Theorem B}

\newtheorem*{conjecture_A}{Conjecture A}

\newtheorem{theorem}{Theorem}[section]
\newtheorem{lemma}[theorem]{Lemma}
\newtheorem{proposition}[theorem]{Proposition}
\newtheorem{corollary}[theorem]{Corollary}
\newtheorem{conjecture}[theorem]{Conjecture}
\newtheorem{remark}[theorem]{Remark}
\theoremstyle{definition}
\newtheorem{definition}[theorem]{Definition}
\newtheorem{def/prop}[theorem]{Definition/Proposition}

\newtheorem{example}[theorem]{Example}
\theoremstyle{remark}

\usepackage{hyperref}\hypersetup{colorlinks}

\usepackage{color} 

\definecolor{darkred}{rgb}{1,0,0} 
\definecolor{darkgreen}{rgb}{0,1,0}
\definecolor{darkblue}{rgb}{0, 0, 1}
\definecolor{darkpurple}{RGB}{170, 51, 106}

\hypersetup{colorlinks,
linkcolor=darkblue,
filecolor=darkgreen,
urlcolor=darkred,
citecolor=darkpurple}

\DeclareMathAlphabet\mathbfcal{OMS}{cmsy}{b}{n}

\DeclareMathOperator{\Hom}{Hom}

\DeclareMathOperator{\End}{End}

\DeclareMathOperator{\id}{1}
\DeclareMathOperator{\Spec}{Spec}

\DeclareMathOperator{\Coh}{Coh}

\DeclareMathOperator{\Higgs}{Higgs}

\DeclareMathOperator{\Mor}{Mor}
\DeclareMathOperator{\Maps}{Maps}
\DeclareMathOperator{\anMaps}{anMaps}
\DeclareMathOperator{\an}{an}

\DeclareMathOperator{\ad}{ad}

\DeclareMathOperator{\Loc}{Loc}

\DeclareMathOperator{\Sym}{Sym}

\DeclareMathOperator{\Rep}{Rep}

\DeclareMathOperator{\Ob}{Ob}

\DeclareMathOperator{\Hodge}{Hodge}
\DeclareMathOperator{\Deligne}{Del}
\DeclareMathOperator{\Tw}{Tw}

\DeclareMathOperator{\Betti}{Betti}

\DeclareMathOperator{\triv}{triv}
\DeclareMathOperator{\Triv}{Triv}

\DeclareMathOperator{\Perf}{Perf}

\DeclareMathOperator{\Dol}{Dol}
\DeclareMathOperator{\dR}{dR}
\DeclareMathOperator{\Hod}{Hod}
\DeclareMathOperator{\Sim}{Sim}
\DeclareMathOperator{\Del}{Del}
\renewcommand{\top}{\mathrm{top}}
\DeclareMathOperator{\QC}{QC}
\DeclareMathOperator{\QA}{QA}

\DeclareMathOperator{\BBB}{BBB}
\DeclareMathOperator{\sst}{sst}
\DeclareMathOperator{\st}{st}

\DeclareMathOperator{\im}{im}

\newcommand{\shear}{{\mathbin{\mkern-6mu\fatslash}}}
\DeclareRobustCommand{\unshear}{\text{\reflectbox{$\shear$}}}

\DeclareMathOperator{\qalg}{\mathrm{qalg}}
\DeclareMathOperator{\QAlg}{\mathrm{QAlg}}

\newcommand{\Hhom}{\mathcal{H}om}

\DeclareMathOperator{\Ind}{\mathrm{Ind}}
\DeclareMathOperator{\discrete}{\mathrm{disc}}

\newcommand{\Cc}{\mathcal{C}}

\newcommand{\Ee}{\mathcal{E}}
\newcommand{\Ff}{\mathcal{F}}

\newcommand{\Kk}{\mathcal{K}}
\newcommand{\Ll}{\mathcal{L}}
\newcommand{\Mm}{\mathcal{M}}

\newcommand{\Oo}{\mathcal{O}}

\newcommand{\Rr}{\mathcal{R}}
\newcommand{\Ss}{\mathcal{S}}
\newcommand{\Tt}{\mathcal{T}}
\newcommand{\Uu}{\mathcal{U}}

\newcommand{\Yy}{\mathcal{Y}}
\newcommand{\Zz}{\mathcal{Z}}

\newcommand{\Ggr}{\mathbb{G}_{\mathrm{gr}}}

\newcommand{\bB}{\mathbf{B}}
\newcommand{\C}{\mathbf{C}}

\renewcommand{\H}{\mathbf{H}}

\renewcommand{\O}{\mathrm{O}}
\newcommand{\OO}{\mathbf{O}}

\newcommand{\B}{\mathrm{B}}
\renewcommand{\P}{\mathbf{P}}

\newcommand{\g}{\mathrm{g}}

\newcommand{\ol}[1]{\overline{#1}}

\newcommand{\wt}[1]{\widetilde{#1}}

\renewcommand{\AA}{\mathbb{A}}

\newcommand{\CC}{\mathbb{C}}
\newcommand{\DD}{\mathbb{D}}

\newcommand{\GG}{\mathbb{G}}
\newcommand{\RR}{\mathbb{R}}
\newcommand{\ZZ}{\mathbb{Z}}
\newcommand{\LL}{\mathbb{L}}

\newcommand{\PP}{\mathbb{P}}

\newcommand{\TT}{\mathbb{T}}

\renewcommand{\to}{\longrightarrow}

\newcommand{\acts}{\curvearrowright}
\newcommand{\racts}{\curvearrowleft}

\newcommand\Quotient[2]{
\mathchoice
{
\text{\raise1ex\hbox{\thinspace $#1$}\Big{/} \lower1ex\hbox{$#2$} \thinspace}%
}
{
#1\,/\,#2
}
{
#1\,/\,#2
}
{
#1\,/\,#2
}
}

\newcommand\GIT[2]{
\mathchoice
{
\text{\raise1ex\hbox{\thinspace $#1$}\Big{/}\!\!\!\!\Big{/} \lower1ex\hbox{$#2$} \thinspace}%
}
{
#1\,/\,#2
}
{
#1\,/\,#2
}
{
#1\,/\,#2
a       }
}

\thanks{
E.Y.C. would like to recognize the support of the Swiss National Science Foundation No. 196960 and the JSPS Postdoctoral Fellowship during the completion of this project.
E.F. was partially supported by the Spanish Ministry of Science and Innovation, through the ‘Severo Ochoa Programme for Centres of Excellence in R$\&$D’ (CEX2019-000904-S), and through projects PID2022-141387NB-C22 and PID2025-174260NB-C21.}

\begin{document}

\author[E. Chen]{Eric Yen-Yo Chen}
\address{E. Y. Chen, \newline\indent \'Ecole Polytechnique F\'ed\'erale de Lausanne, 
\newline\indent CH-1015 Lausanne, Switzerland.}
\email{eric.chen@epfl.ch}

\author[E. Franco]{Emilio Franco}
\address{E. Franco,
\newline\indent Depto. Matem\'aticas, Facultad de Ciencias, 
\newline\indent Universidad Aut\'onoma de Madrid
\newline\indent Campus de Cantoblanco 28049, Madrid, Espa\~na.}
\email{emilio.franco@uam.es}

\title{Quasi-algebraic quantization for the B-twist Langlands TQFT}

\begin{abstract}
This is the first part of a program to construct hyperholomorphic families of boundary conditions for the Kapustin--Witten B-twist of the Langlands QFT, otherwise known as \textit{(BBB)-branes}. We define the category of quasi-algebraic sheaves over the Deligne moduli stack, which serves as an analog of the twistor space of Hitchin's moduli stack. This allow us to construct a representation of a simplified version of the Moore--Tachikawa category which is motivated by the relative Langlands program in the sense of Ben-Zvi--Sakellaridis--Venkatesh.
\end{abstract}

\maketitle

\begingroup
\hypersetup{linkcolor=black}
\tableofcontents
\endgroup

\section{Introduction}

\subsubsection*{Background and motivation}

S-duality, as envisioned by Kapustin--Witten \cite{kapustin&witten} and Gaiotto--Witten \cite{gaiotto&witten}, suggests that Langlands duality should be understood as a manifestation of a deeper symmetry between four-dimensional quantum field theories. From this perspective, the categories of boundary conditions associated to the A and B-twists originate from different representations of a category encoding the symplectic geometry of Hamiltonian group actions, the {\it Moore--Tachikawa category} \cite{Moore--Tachikawa}. The traditional geometric formulations of Langlands duality are primarily concerned with the shallowest stratum (the objects) of the above mentioned category. Regarding the deeper strata, or higher morphisms of the Moore--Tachikawa category, a major step was taken by Ben-Zvi--Sakellaridis--Venkatesh \cite{BZSV} through their \textit{relative Langlands program}. Their work initiates the study of the higher categorical structures underlying Langlands duality by singling out a well behaved class of Hamiltonian spaces, termed \emph{hyperspherical}, for which the Langlands duality is described. Hyperspherical Hamiltonian actions are completely determined by a certain group-theoretic piece of data called {\it BZSV triple}, and, out of the latter, Ben-Zvi--Sakellaridis--Venkatesh construct \textit{$L$-sheaves} on the stack of local systems. With it, the authors achieve in {\it op.cit.} a geometrization of $L$-functions, objects of great importance and long history in classical number theory. 

The aim of this paper (and its sequels) is to obtain a $B$-twist formulation of the above mentioned boundary conditions, incorporating the constructions provided by \cite{BZSV}. 

Kapustin--Witten \cite{kapustin&witten} introduced {\it (BBB)-branes} as the appropriate candidates describing boundary conditions within the B-twist. The hyperK\"ahler structure of the Hitchin moduli space is central in the characterization of (BBB)-branes which are, in their simplest incarnation, hyperholomorphic bundles over the Hitchin moduli space. Making use of the Kaledin--Verbitsky twistor correspondence \cite{kaledin&verbitsky} one can reformulate the construction of (BBB)-branes by considering holomorphic bundles on the twistor space satisfying a certain horizontality condition. This is the approach undertaken by the second named author and Hanson \cite{franco&hanson, hanson_1, hanson_2} consisting of two steps. Firstly, the construction of the {\it analytic Deligne moduli stack} by gluing (the analytification of) two copies of the Hodge moduli stack of $\Lambda$-connections, following Deligne--Simpson construction of the twistor space of the Hitchin moduli space \cite{simpson_hodge_2}. Secondly giving rise to a dg-category of (BBB)-branes over the Deligne moduli stack that include and generalize the initial construction of Kapustin--Witten. The (BBB)-branes constructed in {\it op.cit.} arise from the subcategory of analytic GAGA sheaves on the Deligne moduli stack after equipping the complexes with certain decorations related to the so-called {\it horizontal twistor lines}. We recall that, in the work of Hitchin--Karlhede--Lindstr\"om--Ro\v{c}ek \cite{HKLR}, horizontal twistor lines are part of the data necessary for the reconstruction of the hyperK\"ahler structure on the slices of the twistor space.

Building on the ideas of \cite{HKLR}, Katzarkov--Pandit--Spaide and, recently, Kryczka--Tannaka--Yau, advanced in \cite{KPS, kryczka&tannaka&yau} towards a notion of hyperK\"ahler structure in the setting of derived geometry. This task is still incomplete, lacking from an adequate notion of horizontal twistor lines. Never-the-less, Kryczka--Tannaka--Yau introduce the preliminary notion of shifted derived pre-twistor family of hyperKähler type \cite{kryczka&tannaka&yau}, a property that can be applied to the (analytification of the) Deligne moduli stack.

\subsubsection*{Summary of the paper}

Our first contribution is the introduction of the notion of {\it quasi-algebraic} (derived) stacks and their associated categories of sheaves. This notion interpolates between algebraic and analytic geometry, and the associated sheaf categories inherit essential properties from their algebraic underpinning. For instance, certain finiteness properties of Hom-spaces, the adjoint pair of shriek-pullback and pushforward functors, as well as base change theorems and the projection formula. This is crucial, as the constructions described below will make use of these tools and their relations.   

We then apply the quasi-algebraic framework to the various moduli stacks of non-abelian Hodge theory, obtaining the {\it Deligne moduli stack} and its relative analogue. The latter is a particular class of quasi-algebraic derived stacks that play the role of the total space of hyperholomorphic bundles over twistor space. The relative Deligne moduli stacks are naturally equipped with a morphism $\theta$ to the (absolute) Deligne moduli stack, pushing along which we obtain the sought-after (BBB)-branes. 

In the central part of the paper we address the construction of a version of the Moore--Tachikawa category, whose members are, roughly speaking, equipped with pre-quantization data. We define the {\it polarized Moore--Tachikawa} $2$-category $\Cc$ by considering the same objects as the Moore--Tachikawa category ({\it i.e.} reductive groups) and we stipulate that morphisms come with \textit{polarization data} analogous to those arising in the context of geometric quantization. Our main result provides a B-twist representation of $\Cc$ in dg-categories: 

\begin{theorem_A}[Theorem \ref{th representation of Cc}]
Let $C$ be a smooth projective curve. There exists a representation $\bB$ of the polarized Moore--Tachikawa $2$-category $\Cc$ sending their objects $G \in \Ob(\Cc)$ to the dg-category of quasi-algebraic sheaves on the Deligne moduli stack with structure group $G$ over the curve $C$. 
\end{theorem_A}

We highlight that the use of quasi-algebraic sheaves is key for {\bf Theorem A}. This is so as the analytic framework (like the one used in previous work of the second author \cite{franco&hanson}) lacks from the tools necessary for such task.

The image under $\bB$ of a $1$-morphism is obtained by integral functors whose kernels are provided by push-forward under the above mentioned morphisms $\theta$. In our second main result, these kernels are shown to restrict to the $L$-sheaves appearing in \cite{BZSV} whenever they arise from a BZSV triple. On the other hand, when the BZSV data is simply a representation of cotangent type, these kernels restrict to Gaiotto's (BBB)-branes.

\begin{theorem_B}[Corollaries \ref{co relation with BZSV} and \ref{co relation with Gaiotto}]
Let $M$ be a $1$-morphism in $\Cc$ associated to a BZSV triple with structure group $G$. 
\begin{enumerate}
    \item The restriction of $\bB(M)$ to the stack of $G$-local systems $\Loc_G(C)$ is coincides with the corresponding (unnormalized) BZSV $L$-sheaf.
    \item Suppose $M = T^*V$ is a $G$-representation of cotangent type. The restriction of $\bB(M)$ to the stack of $G$-Higgs bundles $\Higgs_G(C)$ coincides with Gaiotto's (BBB)-branes (denoted $\mathcal{V}(C,G,M)$ in \cite{Gaiotto}). 
\end{enumerate}
\end{theorem_B}

These constructions fit naturally into our broader program of constructing the B-twist of the Langlands TQFT, where we shall enhance the target of $\bB$ to a category of quasi-algebraic sheaves with hyperholomorphic structure.

\begin{conjecture_A}
There exists a representation of the polarized Moore--Tachikawa $2$-category\footnote{More precisely, one should restrict to a certain quasi-affine subcategory. See Section \ref{subsect next steps} for more discussion on this point.} $\Cc$ sending their objects $G \in \Ob(\Cc)$ to the dg-category of (BBB)-branes over the Deligne moduli stack associated to $G$. 
\end{conjecture_A}

We conclude by discussing how the dg-category of (BBB)-branes can be built out of the category of quasi-algebraic sheaves, once one distinguishes an appropriate class of horizontal twistor lines (hence completing the twistor structure of the Deligne moduli stack). We will address {\bf Conjecture A} in subsequent work, building off of {\bf Theorem A}.

\subsubsection*{Organization of the paper}

The paper is organized as follows. Quasi-algebraic stacks and their sheaves are introduced in Section \ref{sc qalg}. In Section \ref{section nonabelian Hodge review} we provide the construction of the relative Deligne moduli stack. The category $\Cc$ is defined in Section \ref{sc polarized MT}. Our main result amounts to Theorem \ref{th representation of Cc}, whose proof spans Sections \ref{sc 1-mor} and \ref{sc 2-mor}. Section \ref{sc BZSV} contains our second main result, Corollaries \ref{co relation with BZSV} and \ref{co relation with Gaiotto}, providing the relation of our representation with the $L$-sheaves appearing in \cite{BZSV} and the (BBB)-branes of \cite{Gaiotto}. In Section \ref{sc BBB-branes} we discuss the construction of (BBB)-branes and explain to some extent a sequel to the present work treating {\bf Conjecture A}.

\subsubsection*{Acknowledgements}
The authors would like to thank David Ben-Zvi, Robert Hanson, Tamás Hausel, Hiraku Nakajima, and Dimitri Wyss for inspiring and instructive conversations that refined our understanding of (BBB)-branes, leading to this work. 

\section{Quasi-algebraic sheaves}
\label{sc qalg}

\subsection{Quasi-algebraic stacks} \label{subsect: gluingOnDeligneStack}

Following Deligne, Simpson constructed \cite{simpson_hodge_2} the twistor space of the moduli space of Higgs bundles by gluing the Hodge moduli space over a curve $\Cc$ and that of its complex conjugate curve $\ol{\Cc}$. A similar gluing construction between the Hodge stacks is used in \cite{franco&hanson} to define the Deligne moduli stack. As we eventually seek to generalize this construction, we introduce in this section the notion of {\it quasi-algebraic} stacks which formalizes the analytic gluing of algebraic stacks. We then define quasi-algebraic sheaves on quasi algebraic stacks and study their properties and their behavior under the six-functor formalism.

We denote by $\mathrm{dAff}$ the $\infty$-category of derived affine scheme almost of finite presentation and by $\mathrm{dSt}$ the $\infty$-category of derived stacks. The $\infty$-categories $\mathrm{dAn}$ and $\mathrm{dAnSt}$ of derived analytic spaces and derived analytic stacks, respectively, were developped in \cite{porta_GAGA, lurie_11_c, porta&yue_dAn}. Many of the structural properties of $\mathrm{dAff}$ and $\mathrm{dSt}$ admit natural analogues in their analytic counterparts $\mathrm{dAn}$ and $\mathrm{dAnSt}$. The work of Holstein and Porta \cite[Section 3]{holstein&porta} provides a derived analog of the usual analytification functor, giving rise to the functors
\[
(\cdot)^{\an} : \mathrm{dAff} \to \mathrm{dAn} \quad \text{and} \quad (\cdot)^{\an} : \mathrm{dSt} \to \mathrm{dAnSt},
\]
which preserve colimits and finite limits.

\begin{definition}
A {\it quasi-algebraic stack} is a tuple $Z = (Z_0, \Rr, \mu_1, \mu_2)$, where $\Rr$ is an analytic stack, $Z_0$ is an algebraic stack, and 
\[\begin{tikzcd}
	\Rr & {Z_0^{\mathrm{an}}}
	\arrow["{\mu_2}"', shift right, from=1-1, to=1-2]
	\arrow["{\mu_1}", shift left, from=1-1, to=1-2]
\end{tikzcd}
\]
is an (analytically open) equivalence relation of analytic stacks. Naturally associated to a quasi-algebaric stack is its analytification 
$$Z^{\mathrm{an}} := [Z_0^{\mathrm{an}}/\mathcal{R}]$$
defined as the analytic quotient stack of $Z_0^{\mathrm{an}}$ by the equivalence relation $\mathcal{R}$, which is an analytic stack.
\end{definition}

In other words, we may think of a quasi-algebraic stack $Z$ as an analytic stack equipped with a preferred algebraic atlas $Z_0$, with possibly analytic gluing data.

The notion of morphisms of quasi-algebraic stacks is the natural one. 
\begin{definition}
A {\it morphism of quasi-algebraic stacks} $\theta : Z = (Z'_0, \mathcal{R}') \to Z = (Z_0, \mathcal{R})$ is given by a tuple of morphisms $\theta = (\vartheta , \theta_0)$, where $\vartheta : \Rr' \to \Rr$ is a morphism of analytic stacks, $\theta_0: Z'_0 \to Z_0$ is a morphism of algebraic stacks, such that 
\[\begin{tikzcd}
	\Rr' & {(Z'_0)^{\mathrm{an}}} \\
	\Rr & {Z_0^{\mathrm{an}}}
	\arrow["{\mu_2'}"', shift right, from=1-1, to=1-2]
	\arrow["{\mu_1}'", shift left, from=1-1, to=1-2]
	\arrow["\vartheta"', from=1-1, to=2-1]
	\arrow["{\theta_0^{\mathrm{an}}}", from=1-2, to=2-2]
	\arrow["{\mu_2}"', shift right, from=2-1, to=2-2]
	\arrow["{\mu_1}", shift left, from=2-1, to=2-2]
\end{tikzcd}
\]
is a morphism of groupoids in analytic stacks. 
\end{definition}

Our notion of quasi-algebraic stacks $\QAlg$ may be regarded as an instance of a \textit{geometric context} in the terminology of \cite[Definition 2.2]{porta&yu_rep}. 

\begin{definition}
We consider the \textit{quasi-algebraic} site $\QAlg$ of algebraic stacks with the analytically open topology, i.e., objects of $\QAlg$ are algebraic stacks, and a covering in $\QAlg$ of some $X$ is an open covering of analytic stacks $\mathcal{U} \to X^{\mathrm{an}}$. 
\end{definition}

\begin{remark} \label{remark qalg geometric context}
Since representable presheaves on $\QAlg$ are sheaves, we may consider higher geometric stacks in the sense of Definition 2.8 of \textit{op. cit} following \cite{simpson_geometricity}, and quasi-algebraic stacks are of geometricity $\leq 0$.
\end{remark}

By the universal property of quotients, we see that a morphism of quasi-algebraic stacks $\theta: Z \to Z'$ induces a morphism of their analytifications
$$\theta^{\mathrm{an}}: Z^{\mathrm{an}} \to (Z')^{\mathrm{an}}.$$
This functor is faithful, although it is not full, as one can consider strictly analytic morphisms (not induced from the GAGA principle). We say that $\theta$ is an \textit{analytic equivalence} if $\theta^{\mathrm{an}}$ is an equivalence of analytic stacks (in other words, we do not insist that the morphism $\theta_0$ on algebraic atlases is an equivalence).

The fibre product construction extends naturally to the quasi-algebraic framework. 
\begin{definition}
Given two quasi-algebraic morphisms $\theta': Z' \to Z$ and $\theta'' : Z'' \to Z$ we define their {\it quasi-algebraic fibre product} as 
\[
Z' \times_{Z} Z'' := \left ( Z_0' \times_{Z_0} Z_0'', \Rr' \times_{\Rr} \Rr'', \wt \mu_1 , \wt \mu_2 \right ), 
\]
where $\wt \mu_i$ is the morphism provided by the universal property of pull-backs in the commuting diagram below,
\[
\begin{tikzcd}
\Rr' \times_{\Rr} \Rr'' & & \Rr' &
\\
 & Z_0' \times_{Z_0} Z_0'' & & Z_0'
\\
\Rr'' & & \Rr &
\\
 & Z_0'' & & Z_0 .
\arrow[from=1-1, to=1-3]
\arrow[from=1-1, to=3-1]
\arrow[from=1-3, to=3-3]
\arrow[from=3-1, to=3-3]
\arrow[from=2-2, to=2-4]
\arrow[from=2-2, to=4-2]
\arrow[from=2-4, to=4-4, "\theta_0'"]
\arrow[from=4-2, to=4-4, "\theta_0''"']
\arrow[from=1-1, to=2-2, dashed, "\wt \mu_i"]
\arrow[from=1-3, to=2-4, "\mu_i'"]
\arrow[from=3-1, to=4-2, "\mu_i''"']
\arrow[from=3-3, to=4-4, "\mu_i"]
\end{tikzcd}
\]
\end{definition}

Finally, since classical truncation commutes with analytification, it is natural to make the following
\begin{definition}
The {\it truncation} of a derived quasi-algebraic stack $Z = (Z_0, \Rr)$ is the quasi-algebraic stack 
\[
t_0(Z) := \left ( t_0(Z_0), t_0(\Rr),  t_0(\mu_1), t_0(\mu_2) \right ). 
\]
\end{definition}

\begin{remark} \label{rm Alg to QAlg}
Any algebraic stack $Y$ is, in a trivial manner, a quasi-algebraic stack, $Y^{\qalg}=(Y, Y^{\an}, \id, \id)$. Similarly, morphisms between algebraic stacks naturally provide morphisms of quasi-algebraic ones giving rise to the functor $(\cdot)^{\qalg} : \mathrm{dSt} \to \QAlg$. In view of Remark \ref{remark qalg geometric context} we are equivalently considering a morphism of geometric contexts from algebraic stacks with the Zariski topology to $\qalg$.

In general, any particular algebraic atlas $Y_0 \to Y$ gives rise to the quasi-algebraic stack $(Y_0, R^{\an}, p_1^{\an}, p_2^{\an} )$, where $R = Y_0 \times_Y Y_0$ and $p_i$ is the projection onto the $i$-th factor. From the composition $\wt Y_0 \stackrel{f}{\to} Y_0 \to Y$ one can infer a morphism of groupoids $(\wt R \rightrightarrows \wt Y_0) \to (R \rightrightarrows Y_0)$ which induces a morphism of quasi-algebraic stacks 
\[
\nu_f : (\wt Y_0, \wt R^{\an}) \to (Y_0, R^{\an}),
\]
giving rise to an analytic isomorphism. Nevertheless, the above morphism might not have a quasi-algebraic inverse (see Example \ref{ex PP^1 qalg} below, for instance).
\end{remark}

We now have all the ingredients to consider group actions within the quasi-algebraic framework.

\begin{definition}
Given an algebraic group $G$ and a quasi-algebraic stack $Z$, we say that $G^{\qalg} \times Z \to Z$ is an {\it action of $G$ on $Z$} if it satisfies the standard associative and identity axioms up to coherent higher homotopies.
\end{definition}

We shall drop $(\cdot)^{\qalg}$ from the notation of group actions whenever its clear.

\begin{example}
\label{ex PP^1 qalg}
We consider $\P^1$ to be the quasi-algebraic scheme $(\AA^1 \sqcup \AA^1, (\AA^1 - 0)^{\an})$, with the usual gluing datum twisted by a minus sign
\[\begin{tikzcd}
	{(\mathbb{A}^1-0)^{\mathrm{an}}} & {\mathbb{A}^1 \sqcup \mathbb{A}^1}
	\arrow["{\lambda \mapsto -\lambda^{-1}}"', shift right, from=1-1, to=1-2]
	\arrow["{\lambda \mapsto \lambda}", shift left, from=1-1, to=1-2].
\end{tikzcd}\]
We will denote by $\AA^1_{\Hod}$ and $\AA^1_{\ol \Hod}$, respectively, the first and second factors of the atlas of the quasi-algebraic scheme $\PP^1$, both isomorphic to $\AA^1$.

There is an evident quasi-algebraic morphism
\begin{equation} \label{eq nu morphism}
\nu : \P^1 = (\AA^1 \sqcup \AA^1, (\AA^1-0)^{\mathrm{an}}) \to (\PP^1)^{\qalg} = (\PP^1, (\PP^1)^{\mathrm{an}})
\end{equation}
which is an analytic equivalence, $(\P^1)^{\an} = (\PP^1)^{\an}$, with no quasi-algebraic inverse, as any morphism $\PP^1 \to \AA^1$ is forcely constant.

The usual conjugation $\lambda \mapsto \ol\lambda$ in $\AA^1 - 0$ extends to the real structure 
\begin{equation} \label{eq chi P^1}
\chi_{\P^1} : \P^1 \to \P^1 
\end{equation}
provided by the antipodal map, $\lambda \mapsto - \ol{\lambda}^{-1}$. Note that $\chi$ has no fixed points, differing from the usual conjugation in $\PP^1$, which fixes the unit circle. The pair $((\P^1)^{\an}, \chi_{\P^1}^{\an})$ appears in the literature under the name of {\it twistor line}.
\end{example}

\subsection{Categories of quasi-algebraic sheaves} 

Following \cite{BZSV}, given a derived stack $X$ we denote by $\QC(X)$ the dg-enhanced derived category of quasi-coherent sheaves. Consider the dg-subcategory $\Coh(X) \subset \QC(X)$ of bounded complexes with coherent cohomology and let us denote by $\QC^!(X)$ the ind-completion of $\Coh(X)$. The category $\QC^!(X)$ is monoidal, with monoidal structure $\otimes^!:= \Delta^!( \, \cdot \, \boxtimes \, \cdot \, )$, where $\Delta: X \to X \times X$ is the diagonal morphism.

Since quasi-algebraic stacks admit an algebraic cover, it is natural to consider certain analytic sheaves equipped with algebraic structure when pulled back to its atlas.

\begin{definition}
 Consider a quasi-algebraic stack $Z = (Z_0, \Rr, \mu_1, \mu_2)$. We define the dg-category $\QA^!(Z)$ of {\it quasi-algebraic ind-coherent sheaves on} $Z$ as the homotopy limit in dg-categories along the diagram
\begin{equation}
\label{eq def qalg indcoh}
\begin{tikzcd}
	{} & {\QC^!(\Rr)} & {\QC^!(Z_0)}
	\arrow["{\mu_2^! \circ ( \, \cdot \, )^{\mathrm{an}}}", shift left, from=1-3, to=1-2]
	\arrow["{\mu_1^!\circ ( \, \cdot \, )^{\mathrm{an}}}"', shift right, from=1-3, to=1-2]
\end{tikzcd}.
\end{equation}
\end{definition}

\begin{remark}
    In principle, one can imagine defining a category of quasi-algebraic quasi-coherent sheaves using the same diagram, with $!$-pullbacks replaced by $*$-pullbacks. However, it appears that the quasi-coherent theory of analytification presents serious difficulties which we will not address in the current discussion. 
\end{remark}

Note that the canonical sheaf $\omega_Z = (\omega_{Z_0}, \gamma)$ is a well-defined object of $\QA^!(Z)$, where $\gamma$ is the canonical isomorphism $\mu_1^! \omega_{Z_0}^{\mathrm{an}} \simeq \mu_2^!\omega_{Z_0}^{\mathrm{an}}$ is naturally an object of $\QA^!(Z)$.

Since the diagram \eqref{eq def qalg indcoh} factors through the analytification functor $( \, \cdot \, )^{\mathrm{an}}: \QA^!(Z_0) \to \QA^!(Z_0^{\mathrm{an}})$, by descent of analytic sheaves and the universal property of $\QA^!(Z)$ there is a canonical analytification functor on quasi-algebraic sheaves
\[
(\,  \cdot \, )^{\mathrm{an}}: \QA^!(Z) \to \QA^!(Z^{\mathrm{an}})
\]
which simply sends a quasi-algebraic sheaf $\Ee = (\Ee_0, \gamma)$ to $(\Ee_0^{\mathrm{an}}, \gamma)$ viewed as descent datum for an analytic sheaf on $Z^{\mathrm{an}}$.

Recall the functor $(\cdot)^{\qalg} : \mathrm{dSt} \to \qalg$ sending the stack $Y$ to the quasi-algebraic stack $Y^{\qalg} = (Y, Y^{\an}, \id^{\an}, \id)$, and note that there exists a corresponding functor between the associated categories of sheaves,
\[
(\cdot)^{\qalg} : \QC^!(Y) \to \QA^!(Y^{\qalg}),
\]
sending $\Ee$ to the pair $(\Ee, \id)$. By the properties of the analytification functor, $(\cdot)^{\qalg}$ is faithful, but fails to be full or essentially surjective.

\begin{remark} 
    We observe that, by the GAGA principle, the analytification functor is conservative if $Z^{\mathrm{an}}$ is a proper analytic space. In that case, $(\cdot)^{\qalg}$ inherits this property too.
\end{remark}

Given $\Ee = (\Ee_0, \gamma), \Ff = (\Ff_0, \gamma')$ in $\QA^!(Z)$ one can define internal Hom-sheaves
\[
\Hhom_Z(\Ee,\Ff) := \left(\Hhom_{Z_0}(\Ee_0, \Ff_0), \gamma'' \right ), 
\]
as objects of $\QA^!(Z)$, where $\gamma''$ is the natural identification 
$$\mu_1^!\Hhom_{Z_0}(\Ee_0, \Ff_0)^{\mathrm{an}} \simeq \Hhom_{\Rr}(\mu_1^!\Ee_0^{\mathrm{an}}, \mu_1^!\Ff_0^{\mathrm{an}}) \simeq \Hhom_{\Rr}(\mu_2^!\Ee_0^{\mathrm{an}}, \mu_2^!\Ff_0^{\mathrm{an}}) \simeq \mu_2^!\Hhom_{Z_0}(\Ee_0, \Ff_0)^{\mathrm{an}},$$
where, in the first and third equivalences we appeal to the projection formula (see statement (6) of \cite[pg.276]{gaitsgory&rozenblyum_1}) and the fact that $\mu_i^! = \mu_i^*$ for both $i = \{1,2\}$ as they are open immersions.

Similarly, we set
\[
\Ee \otimes^! \Ff := (\Ee_0 \otimes^! \Ff_0, \gamma'''),
\]
where $\gamma'''$ is the natural identification
$$\mu_1^!(\Ee_0^{\mathrm{an}} \otimes^! \Ff_0^{\mathrm{an}}) \simeq \mu_1^!\Ee_0^{\mathrm{an}} \otimes^! \mu_1^!\Ff_0^{\mathrm{an}} \simeq \mu_2^!\Ee_0^{\mathrm{an}} \otimes^! \mu_2^!\Ff_0^{\mathrm{an}} \simeq \mu_2^!(\Ee_0^{\mathrm{an}} \otimes^! \Ff_0^{\mathrm{an}}).$$
In the first and third equivalences we appeal to the $\otimes^!$-monoidality of the $!$-pullback, thus defining a monoidal structure, 
\[
\otimes^! : \QA^!(Z) \times \QA^!(Z) \to \QA^!(Z).
\]
compatible with the usual monoidal structure on $\QA^!(Z_0)$ and $\QA^!(Z^{\mathrm{an}})$. With respect to these monoidal structures, $\omega_Z$ is the monoidal unit for $\otimes^!$. In order to simplify notation, we sometimes write $\mathbf{1}_Z \in \QA^!(Z)$ to refer to the $!$-monoidal unit.

One can equip the categories of quasi-algebraic sheaves with push-forwards and pull-backs under quasi-algebraic morphisms as follows.

\begin{lemma} \label{lm qalg push and pull}
Associated to the morphism of quasi-algebraic stacks $\theta : Z \to Z'$, there exist functors
\[
\theta_{*} : \QA^!(Z) \to \QA^!(Z')
\]
and
\[
\theta^! : \QA^!(Z') \to \QA^!(Z),
\]
compatible with $\vartheta_*$ and $\theta_{0, *}$, and with $\vartheta^!$ and $\theta_{0}^!$, respectively. Furthermore, one has the adjuntion
\begin{equation} \label{eq qalg adjunction}
\theta_* \dashv \theta^!.
\end{equation}
\end{lemma}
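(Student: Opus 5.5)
We construct both functors componentwise on the limit diagram \eqref{eq def qalg indcoh}, defining $\QA^!(Z)$. An object of $\QA^!(Z)$ is a pair $\Ee=(\Ee_0,\gamma)$ with $\Ee_0\in\QC^!(Z_0)$ and $\gamma:\mu_1^!\Ee_0^{\an}\simeq\mu_2^!\Ee_0^{\an}$, together with the higher coherence data making it a limit along the full simplicial (Čech) diagram of the groupoid $\Rr\rightrightarrows Z_0^{\an}$. A morphism $\theta=(\vartheta,\theta_0)$ is a morphism of groupoids, so it induces a morphism of the associated simplicial objects. Our plan is to produce natural transformations between the two cosimplicial diagrams of categories and then pass to limits.

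\textbf{The pullback.} Set $\theta^!(\Ee_0,\gamma):=(\theta_0^!\Ee_0,\gamma_\theta)$. Here $\gamma_\theta$ is the composite
\[
\mu_1'^!(\theta_0^!\Ee_0)^{\an}\simeq \mu_1'^!(\theta_0^{\an})^!\Ee_0^{\an}\simeq \vartheta^!\mu_1^!\Ee_0^{\an}\xrightarrow{\vartheta^!\gamma}\vartheta^!\mu_2^!\Ee_0^{\an}\simeq \mu_2'^!(\theta_0^!\Ee_0)^{\an}.
\]
It uses three inputs: compatibility of analytification with $!$-pullback (Holstein--Porta), the identities $\theta_0^{\an}\circ\mu_i'=\mu_i\circ\vartheta$, and functoriality of $(\cdot)^!$. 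For the higher coherences, we note that $!$-pullback is a functor out of the $(\infty,1)$-category of (analytic or algebraic) stacks, and analytification is a natural transformation between these functors. The morphism of Čech nerves therefore gives a morphism of cosimplicial diagrams, and hence a functor on limits. Compatibility with $\vartheta^!$ and $\theta_0^!$ holds by construction.

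\textbf{The pushforward.} We set $\theta_*(\Ee_0,\gamma)=(\theta_{0,*}\Ee_0,\gamma^\theta)$. For $\gamma^\theta$ we need base change: an isomorphism $\mu_i^!(\theta_{0,*}\Ee_0)^{\an}\simeq\vartheta_*\mu_i'^!\Ee_0^{\an}$. This requires two facts.
\begin{enumerate}[(i)]
\item $(\theta_{0,*}\Ee_0)^{\an}\simeq\theta^{\an}_{0,*}\Ee_0^{\an}$. On ind-coherent sheaves this is a GAGA-type statement: it holds for proper $\theta_0$, and more generally must be imposed as a hypothesis or verified for the morphisms at hand.
\item The squares formed by $\mu_i',\mu_i,\vartheta,\theta_0^{\an}$ satisfy base change for $(\cdot)^!$ against $(\cdot)_*$. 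Since the $\mu_i$ are open immersions, $\mu_i^!=\mu_i^*$, and we would invoke base change for open restrictions in the analytic $\QC^!$ formalism. Strictly, this requires the squares to be cartesian, or at least that $\Rr'$ be an open substack of $\Rr\times_{Z_0^{\an}}Z_0'^{\an}$ with a correction term. We would first try to show that restricting to an open commutes with pushforward after shrinking, using the locality of $*$-pushforward on the target.
\end{enumerate}
Granting these, coherences again follow by naturality in the cosimplicial diagram.

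\textbf{The adjunction.} The adjunction $\theta_*\dashv\theta^!$ comes from the levelwise adjunctions $\theta_{0,*}\dashv\theta_0^!$ and $\vartheta_*\dashv\vartheta^!$. The levelwise unit and counit are compatible with the transition functors precisely because of the base change isomorphisms above (a Beck--Chevalley condition). A levelwise adjunction between cosimplicial diagrams of categories that satisfies Beck--Chevalley induces an adjunction on limits, by the standard result on limits of adjunctions in $\mathrm{Pr}^L$ (Lurie, HA 4.7.4). Concretely, $\Hom_{\QA^!(Z')}(\theta_*\Ee,\Ff)$ is computed as an equalizer of $\Hom$'s at levels $Z_0'$ and $\Rr'$, and each term is identified by the levelwise adjunctions.

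\textbf{Main obstacle.} The hardest step is the pushforward, namely facts (i) and (ii). Analytification of ind-coherent pushforward is only controlled for proper, or suitably finite-type, morphisms, and the groupoid squares need not be cartesian. We would try first to reduce to the case where $\Rr'\to\Rr\times_{Z_0^{\an}}Z_0'^{\an}$ is an open immersion, which holds in the examples such as $\nu$ in \eqref{eq nu morphism}. If this reduction fails, we would instead define $\theta_*$ abstractly as the right adjoint of $\theta^!$ would not exist, so the fallback is to take $\theta_*$ as the left adjoint of $\theta^!$, available via the adjoint functor theorem since $\theta^!$ preserves limits and is accessible. We would then verify the claimed compatibility with $\theta_{0,*}$ on the atlas.
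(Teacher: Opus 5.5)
\textbf{Verdict: genuine gap.} Your overall route is the same as the paper's. You get both functors from the universal property of the limit \eqref{eq def qalg indcoh} applied to the levelwise functors, you assemble the adjunction from the levelwise ones, and your $\theta^!$ is exactly the paper's. The gap is that for $\theta_*$ and for the adjunction you never give an argument valid for an arbitrary morphism, and the missing steps cannot be supplied in that generality.

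\textbf{(i) and (ii) are assumed, and both fail.} They already fail for $\nu$ in \eqref{eq nu morphism}, the example you single out. Write it as $\theta\colon Z\to Z'$ with $Z=\P^1$ and $Z'=(\PP^1)^{\qalg}$.
\begin{itemize}
\item For (i): $\theta_0\colon\AA^1\sqcup\AA^1\to\PP^1$ is not proper. So $(\theta_{0,*}\Ee_0)^{\an}$ allows only finite-order poles at the missing point, while $\theta^{\an}_{0,*}\Ee_0^{\an}$ allows essential singularities.
\item For (ii): the comparison map $\Rr=(\AA^1-0)^{\an}\to\Rr'\times_{(Z_0')^{\an}}Z_0^{\an}=(\AA^1\sqcup\AA^1)^{\an}$ is only an open immersion. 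Hence $(\mu_1')^!\theta^{\an}_{0,*}$ sees both charts, whereas $\vartheta_*\mu_1^!$ sees only the overlap. No ``shrinking'' repairs this.
\end{itemize}

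\textbf{The fallback fails too.} $\theta^!$ need not preserve limits. For $j\colon\GG_m\hookrightarrow\AA^1$, $j^!=j^*$ does not commute with products, since $(\prod_n\CC[x])[x^{-1}]\to\prod_n\CC[x^{\pm1}]$ misses $(x^{-n})_n$. So the adjoint functor theorem gives nothing. In any case, a left adjoint of a restriction would be an extension by zero, not $\theta_{0,*}$.

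\textbf{The levelwise adjunctions are not available in general.} The adjunctions $\theta_{0,*}\dashv\theta_0^!$ and $\vartheta_*\dashv\vartheta^!$ that you feed into the limit-of-adjunctions argument hold for ind-coherent sheaves only for proper maps. For an open immersion the adjunction runs the other way, $j^!=j^*\dashv j_*$. Concretely, $\Hom^0(j_*\Oo_{\GG_m},\Oo_{\AA^1})=0$, while $\Hom^0(\Oo_{\GG_m},j^!\Oo_{\AA^1})=\CC[x^{\pm1}]$. So for $\theta=j^{\qalg}$ the adjunction asserted in the statement is false.

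\textbf{How this compares with the paper.} These defects are not peculiar to your write-up. The paper's proof simply asserts that $(\theta_{0,*},\vartheta_*)$ is a morphism of diagrams, and that the adjunction ``follows from the corresponding algebraic statement''. Your (i)--(ii) are exactly what that assertion silently uses. What is missing in both is a hypothesis rather than an idea. Suppose $\theta_0$ is proper and the groupoid squares are cartesian, i.e.\ $\Rr\simeq\Rr'\times_{(Z_0')^{\an}}Z_0^{\an}$ via $\mu_i$ for $i=1,2$, so that $\vartheta$ is proper as well. Then:
\begin{itemize}
\item (i) is relative GAGA for proper maps;
\item (ii) is base change along the open immersions $\mu_i'$;
\item the levelwise adjunctions hold and satisfy Beck--Chevalley.
\end{itemize}
Under these hypotheses your argument goes through. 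Without them, the statement in its stated generality is contradicted by the $j^{\qalg}$ example above.
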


\begin{proof}
The morphism of diagrams
\[\begin{tikzcd}
	{\QC^!(Z_0)} & {\QC^!(Z_0')} \\
	{\QC^!(\Rr)} & {\QC^!(\Rr')}
	\arrow["{\theta_{0,*}}", from=1-1, to=1-2]
	\arrow[shift right, from=1-1, to=2-1]
	\arrow[shift left, from=1-1, to=2-1]
	\arrow[shift right, from=1-2, to=2-2]
	\arrow[shift left, from=1-2, to=2-2]
	\arrow["{\vartheta_*}", from=2-1, to=2-2]
\end{tikzcd}\]
induces, by universal property of $\QA^!(Z')$, a functor $\theta_*: \QA^!(Z) \to \QA^!(Z')$.

The construction of $\theta^!$ is analogous. Replacing $\theta_{0,*}$ and $\vartheta_*$ in the previous diagram by $\theta_0^!$ and $\vartheta^!$, respectively, and appealing now to the universal property of $\QA^!(Z)$, we obtain a functor $\theta^!: \QA^!(Z') \to \QA^!(Z)$.

Finally, adjunction in the quasi-algebraic setting follows from the corresponding algebraic statement and the fact that the analytification functor is conservative.
\end{proof}

\begin{example} \label{ex line bundles on PP^1 twist}
By the above, one can construct for every $n \in \ZZ$ the line bundle $\Oo_{\P^1}(n) \in \QA^!(\P^1)$ by pulling-back the corresponding line bundle under \eqref{eq nu morphism},
\[
\Oo_{\P^1}(n) := \nu^!\Oo_{\PP^1}(n)^{\qalg}.
\]
\end{example}

The relations between the four {\it quasi-algebraic functors} $\theta^!$, $\theta_*$, $\Hom_Z$ and $\otimes^!$ are naturally inherited by the analog relations of their algebraic counterparts. 

\begin{lemma}[Quasi-algebraic base change]
For a Cartesian square of quasi-algebraic stacks
\[
\begin{tikzcd}
Z' & Z
\\
Y' & Y
\arrow[from=1-1, to=1-2, "\rho'"]
\arrow[from=1-1, to=2-1, "\theta'"']
\arrow[from=1-2, to=2-2, "\theta"]
\arrow[from=2-1, to=2-2, "\rho"'],
\end{tikzcd}
\]
one has the isomorphism  
\[
\theta_*\rho^! \cong (\rho')^! \theta'_*
\]
of functors from $\QA^!(Z)$ to $\QA^!(Y')$.
\end{lemma}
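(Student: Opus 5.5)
The statement as printed has its functors in the wrong order for them to compose. The intended isomorphism, and the one I will prove, is
\[
\rho^! \theta_* \cong \theta'_* (\rho')^!
\]
of functors $\QA^!(Z) \to \QA^!(Y')$. The approach is to deduce it from base change on the two layers of the defining diagram \eqref{eq def qalg indcoh}: the algebraic atlases and the analytic equivalence relations.

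\textbf{Step 1: unwind the Cartesian square.} By the definition of the quasi-algebraic fibre product, $Z' = Z \times_Y Y'$ has atlas $Z'_0 = Z_0 \times_{Y_0} Y'_0$ and relation $\Rr_{Z'} = \Rr_Z \times_{\Rr_Y} \Rr_{Y'}$. So the square splits into two Cartesian squares:
\begin{itemize}
\item a square of algebraic stacks with maps $\theta_0, \rho_0, \theta'_0, \rho'_0$;
\item a square of analytic stacks with maps $\vartheta, \varrho, \vartheta', \varrho'$.
\end{itemize}
These are compatible with the structure maps $\mu_i$. By Lemma \ref{lm qalg push and pull}, each of $\theta_*, \theta'_*, \rho^!, (\rho')^!$ is induced, through the universal property of the homotopy limit, from the corresponding functors on $\QC^!(Z_0)$ and $\QC^!(\Rr)$. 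Hence both sides of the claimed isomorphism are induced from morphisms of the diagrams \eqref{eq def qalg indcoh}.

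\textbf{Step 2: base change on each layer.} On the atlas I invoke the algebraic base change isomorphism $\rho_0^! \theta_{0,*} \cong \theta'_{0,*} (\rho'_0)^!$ for ind-coherent sheaves \cite{gaitsgory&rozenblyum_1}. On the relation I invoke the analogous isomorphism $\varrho^! \vartheta_* \cong \vartheta'_* (\varrho')^!$ for analytic stacks \cite{porta_GAGA, holstein&porta}.

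\textbf{Step 3: assemble the natural isomorphism.} To obtain an isomorphism of the induced functors on the limits, I must check that these two layerwise isomorphisms intertwine the structure isomorphisms. Concretely, the following must commute up to coherent homotopy:
\begin{enumerate}
\item analytification $(\cdot)^{\an}$ with $\theta_{0,*}$ and $\rho_0^!$, i.e.\ $(\theta_{0,*}\Ee)^{\an} \simeq \vartheta_{0,*}^{\an}\Ee^{\an}$ and similarly for $!$-pullback;
\item the pullbacks $\mu_i^! = \mu_i^*$ (open immersions) with pushforward and $!$-pullback along the relation, via open base change.
\end{enumerate}
Granting these, the algebraic and analytic base change maps assemble into a morphism of limit diagrams, and hence into a natural transformation $\rho^!\theta_* \Rightarrow \theta'_*(\rho')^!$ on $\QA^!$. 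As in the proof of Lemma \ref{lm qalg push and pull}, it is enough to check that this transformation is an equivalence after forgetting to the atlas component $\QC^!(Y'_0)$, because equivalences in a limit of categories are detected componentwise. There it is exactly the algebraic base change isomorphism from Step 2.

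\textbf{Expected obstacle.} The delicate point is item (1) of Step 3: compatibility of analytification with $\theta_{0,*}$, which is a GAGA-type statement. It is automatic for proper morphisms, but not in general for ind-coherent pushforward.

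My first attempt would be to define the transformation so that it never needs $(\theta_{0,*}\Ee)^{\an}$ directly. The idea is to check the cocycle compatibility only after applying $\mu_i^!$, where all maps are open immersions into analytifications, so only open base change is required. If that does not suffice, I would add hypotheses making $\theta_0$ proper, or of finite type with Porta's relative GAGA available, to supply the commutation of pushforward with analytification.
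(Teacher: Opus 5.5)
Your reading of the statement is right. As printed, $\theta_*\rho^!$ does not compose, and the intended isomorphism is $\rho^!\theta_*\cong\theta'_*(\rho')^!$.

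Your argument is sound relative to the framework the statement presupposes, namely the functors of Lemma~\ref{lm qalg push and pull}. It takes a different route from the paper, which deduces the lemma in one line from $\theta\circ\rho'\cong\rho\circ\theta'$ and the adjunction $\theta_*\dashv\theta^!$.

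Taken literally, the paper's argument only produces the Beck--Chevalley comparison map $\theta'_*(\rho')^!\Rightarrow\rho^!\theta_*$, using the unit for $\rho$ and the counit for $\rho'$. Cartesianness of the square is never used, so invertibility is not really addressed. Moreover, in the ind-coherent setting $f_*\dashv f^!$ only holds for proper $f$; for an open embedding the adjunction runs the other way.

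Your route supplies the missing content and bypasses that adjunction:
\begin{itemize}
\item you use Cartesianness layer by layer;
\item you import algebraic ind-coherent base change on the atlases, which requires no properness;
\item you conclude by conservativity of the forgetful functor $\QA^!(Y')\to\QC^!(Y'_0)$. This holds because the relation component of a morphism in the equalizer is $\mu_1^!(\cdot)^{\an}$ of its atlas component.
\end{itemize}
In particular, base change on the analytic relations enters only as a comparison map in the coherence check, not as an isomorphism. So you do not need to locate an analytic ind-coherent base change theorem in the literature.

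Your expected obstacle is genuine, but it is not specific to base change. Items (1) and (2) of your Step 3 are exactly the commutation data, e.g.\ $\mu_i^!\bigl((\theta_{0,*}\Ee_0)^{\an}\bigr)\simeq\vartheta_*\mu_i^!(\Ee_0^{\an})$. These are needed for $\theta_*$ and $\rho^!$ to be induced from a morphism of the diagrams \eqref{eq def qalg indcoh} in the first place, so they are the content of Lemma~\ref{lm qalg push and pull}. Granting that lemma, as the statement does, you may use them freely. All that then remains is that the algebraic base change map intertwines them.

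Your proposed workaround would not remove the issue, for two reasons:
\begin{itemize}
\item After applying $\mu_i^!$ you still analytify a non-proper pushforward; already for $\AA^1\to\mathrm{pt}$ one is comparing polynomials with entire functions.
\item The pushforward half of item (2) needs the squares formed by $\mu_i$, $\theta_0^{\an}$ and $\vartheta$ to be Cartesian. A quasi-algebraic morphism need not satisfy this; the morphism $\nu$ in \eqref{eq nu morphism} is a counterexample.
\end{itemize}
So any properness-type hypothesis belongs in Lemma~\ref{lm qalg push and pull}, not in the base change lemma.
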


\begin{proof}
This follows naturally from the functorial identification $\theta \circ \rho' \cong \rho \circ \theta'$ and the quasi-algebraic adjunction \eqref{eq qalg adjunction}.
\end{proof}

\begin{lemma}[Quasi-algebraic projection formula]
For any morphism of quasi-algebraic stacks $\theta : Z \to Y$, given $\Ee \in \QA^!(Z)$ and $\Ff \in \QA^!(Y)$, one has the isomorphism
\[
\theta_* \left ( \Ee \otimes^! \theta^! \Ff \right ) \cong \left ( \theta_* \Ee \right ) \otimes^! \Ff.
\]
\end{lemma}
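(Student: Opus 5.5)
The plan is to reduce the quasi-algebraic statement to the algebraic projection formula on the atlas $Z_0 \to Y_0$ and the analytic one on $\Rr_Z \to \Rr_Y$. Then we check that these two isomorphisms are compatible with the gluing data, so that they assemble into an isomorphism in the homotopy limit $\QA^!(Y)$.

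Write $\theta = (\vartheta, \theta_0)$, $\Ee = (\Ee_0, \gamma_\Ee)$ and $\Ff = (\Ff_0, \gamma_\Ff)$.

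First, we compute both sides componentwise. By Lemma \ref{lm qalg push and pull}, $\theta^!$ and $\theta_*$ are compatible with $\theta_0^!$, $\theta_{0,*}$ on the atlas. By construction, $\otimes^!$ on $\QA^!(Z)$ is computed by $\otimes^!$ on $Z_0$ with the glued identification $\gamma'''$. Hence the underlying objects on $Y_0$ are $\theta_{0,*}(\Ee_0 \otimes^! \theta_0^! \Ff_0)$ and $\theta_{0,*}\Ee_0 \otimes^! \Ff_0$. The projection formula for $\QC^!$ on derived stacks (statement (6) of \cite[pg.276]{gaitsgory&rozenblyum_1}, already invoked for internal Hom) gives a natural isomorphism
\[
\alpha_0 : \theta_{0,*}\left(\Ee_0 \otimes^! \theta_0^!\Ff_0\right) \simeq \theta_{0,*}\Ee_0 \otimes^! \Ff_0 .
\]
The same statement in the analytic setting gives $\alpha_{\Rr}$ on the relation stacks for $\vartheta$. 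Here I would use the analytic six-functor formalism for ind-coherent sheaves, which I take as available alongside the $!$-pullbacks and pushforwards already used to define $\vartheta_*$ and $\vartheta^!$.

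Second, we check that $\alpha_0$ defines a morphism in the limit. Concretely, under $\mu_i^! \circ (\cdot)^{\an}$ the isomorphism $\alpha_0^{\an}$ should go to $\alpha_{\Rr}$ for $i = 1, 2$, compatibly with the gluing isomorphisms on both sides. This splits into two compatibilities. The first is analytification commuting with $\theta_{0,*}$, $\theta_0^!$, $\otimes^!$ and the projection-formula morphism; the latter is built from the unit and counit of $\theta_{0,*} \dashv \theta_0^!$ and the monoidality of $!$-pullback. The second is the base change of $\mu_i^!$ past the pushforwards. Since the $\mu_i$ are open immersions, $\mu_i^! = \mu_i^*$, so this is base change along an open embedding in the square relating $\Rr_Z \to \Rr_Y$ to $Z_0^{\an} \to Y_0^{\an}$. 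I would arrange this by viewing the projection-formula map as adjoint to $\Ee \otimes^! \theta^!\Ff \to \theta^!(\theta_*\Ee \otimes^! \Ff)$. That map is built from the unit $\Ee \to \theta^!\theta_*\Ee$ and the monoidality $\theta^!(A \otimes^! B) \simeq \theta^!A \otimes^! \theta^!B$, and both of these are defined in $\QA^!(Z)$ using only Lemma \ref{lm qalg push and pull} and the definition of $\otimes^!$.

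Finally, the resulting morphism in $\QA^!(Y)$ is an isomorphism because its image on the atlas, $\alpha_0$, is one. Limits of dg-categories detect isomorphisms componentwise, or equivalently by the conservativity argument used for \eqref{eq qalg adjunction}.

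The main obstacle is the second step, specifically the base change needed to commute $\mu_i^!$ with $\vartheta_*$ and $\theta_{0,*}^{\an}$. The commuting square of groupoids in the definition of a quasi-algebraic morphism is not obviously Cartesian, so base change is not automatic. I would first try to prove that the square is Cartesian whenever $\Rr$ is an open equivalence relation, since then $\Rr_Z$ should be identified with $\Rr_Y \times_{Y_0^{\an}} Z_0^{\an}$. Failing that, I would use quasi-algebraic base change, the lemma just proved, to supply the needed isomorphism.
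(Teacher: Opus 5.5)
This is essentially the paper's proof, which is just the one line ``algebraic projection formula plus conservativity''; your construction of the comparison map from the unit and the monoidality of $\theta^!$, followed by checking it through the conservative forgetful functor $\QA^!(Y)\to\QC^!(Y_0)$, simply spells that out. The base-change step you single out is not really part of this lemma: the compatibility $(\mu^Y_i)^!\big(\theta_{0,*}(\,\cdot\,)\big)^{\an}\simeq\vartheta_*(\mu^Z_i)^!(\,\cdot\,)^{\an}$ is already presupposed in Lemma~\ref{lm qalg push and pull} to define $\theta_*$, so your proof and the paper's both inherit it from there. It also cannot be obtained from your two fallbacks: Cartesianness fails already for $\nu:\P^1\to(\PP^1)^{\qalg}$, where $\Rr_Z=(\AA^1-0)^{\an}$ but $\Rr_Y\times_{Y_0^{\an}}Z_0^{\an}\simeq(\AA^1\sqcup\AA^1)^{\an}$, and quasi-algebraic base change itself rests on that same lemma.
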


\begin{proof}
The proof follows immediately from corresponding statement in the algebraic setting together with the fact that the analytification functor is conservative.
\end{proof}

\subsection{Shearing and the exponential sheaf}

Our goal in this section is to describe a certain quasi-algebraic sheaf which will be of eventual importance to us, called the \textit{exponential sheaf}. After describing Koszul duality in terms of the so-called {\it shear functor}, Ben-Zvi--Sakellaridis--Venkatesh  introduced this sheaf as the Koszul dual of a sky-scraper sheaf over $\AA^1$. 

We first extend the shear functor to the quasi-algebraic setting, recalling, along the way, its original formulation provided by \cite{BZSV} in the algebraic context. Remaining still in this framework, we extend to the total spaces of tangent and cotangent bundles over $\PP^1$ the funtors giving rise to Koszul duality and the construction of the exponential sheaf. Finally, we show how the latter induces its quasi-algebraic analog.

\subsubsection{Quasi-algebraic shearing} We will need to introduce the technical device of \textit{shearing} \cite[Section 6]{BZSV} in the quasi-algebraic context, which we will restrict to the simpler case of even weight actions.

Let $Z = (Z_0, \Rr, \mu_1, \mu_2)$ be a quasi-algebraic stack with even $\GG_m$-action, i.e., that the induced $\GG_m$-action on $Z_0$ factors through the squaring map $x \mapsto x^2$. Suppose $\Ee, \Ff \in \QA^!(Z)$ are two quasi-algebraic sheaves on $Z$ equipped with a quasi-algebraic morphism $f :\Ee \to \Ff$. This includes the data of
\begin{itemize}
    \item ind-coherent sheaves $\Ee_0$ and $\Ff_0 \in \mathrm{QC}^!(Z_0)$ with an algebraic morphism $f_0: \Ee_0 \to \Ff_0$,
    \item and identifications
    \[
    \mu_1^!\Ee^{\an}_0 \overset{\sim}{\to} \mu_2^! \Ee^{\an}_0  \quad \text{and} \quad \mu_1^!\Ff^{\an}_0 \overset{\sim}{\to} \mu_2^! \Ff^{\an}_0
    \]
    such that the diagram
    \begin{equation}
        \begin{tikzcd}
	{\mu_1^!\Ee^{\an}_0} & {\mu_2^!\Ee^{\an}_0} \\
	{\mu_1^!\Ff^{\an}_0} & {\mu_1^!\Ff^{\an}_0}
	\arrow[from=1-1, to=1-2]
	\arrow["{\mu_1^!f^{\an}_0}"', from=1-1, to=2-1]
	\arrow["{\mu_1^!f^{\an}_0}", from=1-2, to=2-2]
	\arrow[from=2-1, to=2-2]
\end{tikzcd}
    \end{equation}
    commutes. 
\end{itemize}

Ben-Zvi--Sakellaridis--Venkatesh introduced the \textit{sheared category} $\QC^!(Z_0)^\shear$ as the dg-category with the same underlying objects as $\QC^!(Z_0)$, but with {\it sheared} homomorphisms \cite[Section 6.1]{BZSV}
\[
    \mathrm{Hom}(\Ee_0,\Ff_0)^\shear := \bigoplus_{k \in \mathbf{Z}} \mathrm{Hom}(\Ee_0,\Ff_0)_k[k].
\]
where $(-)_i$ denotes the $\GG_m$-weight space of weight $k$. This gives rise to the {\it shear functor}
\[
\QC^!(Z_0) \ni \Ee \longmapsto \Ee^\shear \in \QC^!(Z_0)^\shear.
\]

Since the morphisms $f_0$ live in the algebraic category $\QC^!(Z_0)$, the $\GG_m$-action on $\mathrm{Hom}_{Z_0}(\Ee_0,\Ff_0)$ induces a $\GG_m$-action on $\mathrm{Hom}_Z(\Ee,\Ff)$ which decomposes this space into a direct sum over even $\GG_m$-weights\footnote{Note that this would not be the case for the category of analytic ind-coherent sheaves $\QC^!(Z^{\an})$ with the action of $(\GG_m)^{\an}$: the algebraic structure underlying quasi-algebraic sheaves is crucial if $Z$ is not projective.}. This allow us to extend the above to the quasi-algebraic framework.

\begin{definition}
Given a quasi-algebraic stack $Z$ equipped with an even $\GG_m$-action, define the \textit{sheared category} $\QA^!(Z)^\shear$ of quasi-algebraic sheaves on $Z$ to be the dg-category with the same underlying objects as $\QA^!(Z)$, and morphisms
\[
\mathrm{Hom}(\Ee,\Ff)^\shear := \bigoplus_{k \in \mathbf{Z}} \mathrm{Hom}(\Ee,\Ff)_k[k].
\]
We will write 
$$\QA^!(Z) \ni \Ee \longmapsto \Ee^\shear \in \QA^!(Z)^\shear$$
for corresponding objects under shearing. 
\end{definition}

We record some evident functorial properties of the shearing operation. 

\begin{remark}
If $f: X \to Y$ is a $\GG_m$-equivariant morphism of quasi-algebraic stacks, then the pushforward and exceptional pullback functors $f_*, f^!$ induce analogues on the sheared categories $f_*^\shear: \QA^!(X)^\shear \to \QA^!(Y)^\shear$ and $(f^!)^\shear: \QA^!(Y)^\shear \to \QA^!(X)^\shear$, and we may drop the superscript $(-)^\shear$ on the functors when the context is clear. 
\end{remark}

\begin{remark}
When $Z$ is equipped with the trivial $\GG_m$-action, the sheared category of sheaves is canonically equivalent to the usual one, and we denote by
\begin{equation}
    (-)^\unshear: \QA^!(Z)^\shear \to \QA^!(Z)
\end{equation}
this canonical identification. 
\end{remark}

\subsubsection{Koszul duality and line bundles over the projective line}

The introduction of shearing allowed Ben-Zvi--Sakellaridis--Venkatesh to provide a precise formulation of Koszul duality on the affine line \cite[Appendix A]{BZSV}. Our goal in this section is to provide a functor, covering the total spaces of tangent and cotangent bundles over $\PP^1$, which restricts pointwise to the functor giving rise to Koszul duality.

Considering $\AA^1$ as $\Spec(k[x_0])$ with $x_0$ of weight $2$, one has that $k[x_0]^{\shear} = k[x_2]$, with $x_2$ having cohomological degree $-2$. One has the identification of module categories
\[
k[x_0]-\mathrm{mod} = \QC(\AA^1) \quad \text{and} \quad k[x_0]^{\shear}-\mathrm{mod} = \QC(\AA^1)^{\shear}.
\]
Consider the augmentation object to be $k_0$, the sky-scraper at $0 \in \AA^1$, and its sheared version $k_0^{\shear}$. Observe that the {\it Koszul dual} algebra amounts to
\[
k[y_{-1}] = \End_{k[x_0]^{\shear}}\left ( k_0^{\shear}, k_0^{\shear} \right )^{\mathrm{op}},
\]
with $y_{-1}$ in cohomological degree $1$. Note that $\Spec(k[y_{-1}]) = \AA^1[-1]$. One can define the {\it Morita functors}
\[
(\cdot) \otimes_{k[x_0]^{\shear}} k_0^{\shear} : k[x_0]^{\shear}-\mathrm{mod} \leftrightarrow k[y_{-1}]-\mathrm{mod} : \Hom_{k[y_{-1}]}\left (k^{\shear}, (\cdot) \right ),
\]
sending perfect complexes of $k[x_0]^{\shear}$-modules to bounded complexes of $k[y_{-1}]$-modules with coherent cohomology
\[
\Perf(\AA^1)^\shear \simeq \Coh(\mathbb{A}^1[-1]).
\]
Passing to their $\mathrm{Ind}$-completions one obtains
\[
\QC(\AA^1)^\shear \simeq \QC^!(\mathbb{A}^1[-1]).
\]
as described in \cite[pg. 363]{BZSV}. Furthermore, since the $\GG_m$-weights of $x$ and $y$ are opposite, shearing by the inverse of the previous action, the above gives rise to \cite[pg. 365]{BZSV}
\begin{equation} \label{eq Koszul duality over fibres}
\Kk_{\AA^1} : \QC(\AA^1) \stackrel{\simeq}{\to} \QC^!(\mathbb{A}^1[-1])^\shear.
\end{equation}

Let $\O(2)$ and $\O(-2)$ denote, respectively, the total space of the tangent $T_{\PP^1} \cong \Oo_{\PP^1}(2)$ and cotangent $T^*_{\PP^1} \cong \Oo_{\PP^1}(-2)$ bundles of $\PP^1$. Let us equip $\O(-2) \to \PP^1$ with a $\GG_m$-action of weight $2$ extending to the trivial action on $\PP^1$. Note that 
\[
\O(2) = \Spec_{\PP^1} \left ( \Sym^\bullet \Oo_{\PP^1}(-2) \right ), \quad \text{and} \quad \O(-2) = \Spec_{\PP^1} \left ( \Sym^\bullet \Oo_{\PP^1}(2) \right ),
\]
so, $\GG_m$ acts on the fibres of $\Oo_{\PP^1}(-2)$ and $\Oo_{\PP^1}(2)$ with weights $-2$ and $2$, respectively. With respect to this action, set
\[
S := \Sym^\bullet \Oo_{\PP^1}(2) \quad \text{and} \quad S^{\shear} := \Sym^\bullet \Oo_{\PP^1}(2)^{\shear},
\]
and consider the module categories
\[
S-\mathrm{mod} = \QC(\O(-2)) \quad \text{and} \quad S^{\shear}-\mathrm{mod} = \QC(\O(-2))^{\shear}.
\]
Let us consider in $\QC(\O(-2))^{\shear}$ the augmentation object $\delta_0^{\shear}$, where $\delta_0$ amounts to the push-forward of $\Oo_{\PP^1}$ under the $0$-section $\PP^1 \to \O(-2)$. Recall that $\delta_0$ is resolved by $q^*\Oo_{\PP^1}(2) \stackrel{s}{\to} \Oo_{\O(-2)}$, where $s$ is the tautological section of $q^*\Oo_{\PP^1}(-2)$ over $\O(-2)$. With such a choice, set
\[
\Lambda := \End_{S^{\shear}} \left (\delta_0^{\shear}, \delta_0^{\shear} \right )^{\mathrm{op}}, 
\]
and observe that 
\[
\Lambda \cong \Sym^\bullet \left ( \Oo_{\PP^1}(-2)[-1] \right ), 
\]
so, its relative $\Spec$ gives rise to
\[
\Spec_{\PP^1} \Lambda \cong \O(2)[-1],
\]
the self-intersection of the $0$-section within the total space of the cotangent bundle of $\PP^1$. Consider as well the Morita functors
\[
(\cdot) \otimes_{S^{\shear}} \delta_0^{\shear} : S^{\shear}-\mathrm{mod} \leftrightarrow \Lambda-\mathrm{mod} : \Hom_{\Lambda}\left (\delta_0^{\shear}, (\cdot) \right ).
\]
Since $\delta_0^{\shear}$ is perfect, its tensor product preserves coherence,
\[
(\cdot) \otimes_{S^{\shear}} \delta_0^{\shear} : \Coh(\O(-2))^{\shear} \cong \Perf(\O(-2))^{\shear} \to \Coh(\O(2)[-1]),
\]
where we recall that $\O(2)$ is smooth, promoting to their corresponding $\Ind$-completions,
\[
(\cdot) \otimes_{S^{\shear}} \delta_0^{\shear} : \QC(\O(-2))^{\shear} \to \QC^!(\O(2)[-1]),
\]
Finally, negating the $\GG_m$-action, provides a functor
\begin{equation} \label{eq Koszul duality over O2}
\Kk_{\O(-2)} : \QC(\O(-2)) \to \QC^!(\O(2)[-1])^{\shear}.
\end{equation}

We next show that \eqref{eq Koszul duality over O2} restricts pointwise to \eqref{eq Koszul duality over fibres}.

\begin{lemma} \label{lm pointwise restriction of Koszul}
Consider the closed immersions $\imath_x:\AA^1 \hookrightarrow \O(-2)$ and $i_x:\AA^1[-1] \hookrightarrow \O(2)[-1]$ of the fibres of the projections $q : \O(-2) \to \PP^1$ and $\dot{q} : \O(2)[-1] \to \PP^1$ over the same point $x \in \PP^1$ ({\it i.e. } the image of the compositions are $\im(q \circ \imath_x) = \{ x \} = \im(\dot{q} \circ i_x)$). Then, 
\[
\Kk_{\AA^1}\left (\imath_x^*(\cdot) \right ) \cong i_x^*\Kk_{\O(-2)}(\cdot).
\]
\end{lemma}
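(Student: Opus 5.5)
Both functors in the statement are built from the same three steps: shearing, tensoring with the augmentation object, and negating the $\GG_m$-action. The plan is to show that each step commutes with restriction to the fibre over $x$, using the fact that the whole construction over $\O(-2)$ is linear over $\PP^1$. Fix $x \in \PP^1$ and let $j_x : \{x\} \hookrightarrow \PP^1$ be the inclusion. Then $\imath_x$ and $i_x$ are the base changes of $j_x$ along $q$ and $\dot q$. Since $\GG_m$ acts trivially on $\PP^1$, these base changes are $\GG_m$-equivariant. As in the remark on equivariant morphisms, $\imath_x^*$ and $i_x^*$ therefore induce functors on the sheared categories, and these commute with $(\cdot)^{\shear}$ and with negating the action. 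So it suffices to compare the Morita functors
\[
(\cdot)\otimes_{S^{\shear}}\delta_0^{\shear} \qquad\text{and}\qquad (\cdot)\otimes_{k[x_0]^{\shear}} k_0^{\shear}
\]
after restriction.

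\textbf{Step 1: restrict the algebras and the augmentation.} Choose a trivialization of $\Oo_{\PP^1}(2)$ at $x$. Then $j_x^*S = \Sym^\bullet(\Oo_{\PP^1}(2)|_x) \cong k[x_0]$, with the generator in weight $2$, so $\imath_x^*$ is the functor $(\cdot)\otimes_S k[x_0]$. For the augmentation, restricting the Koszul resolution $q^*\Oo_{\PP^1}(2)\xrightarrow{s}\Oo_{\O(-2)}$ of $\delta_0$ gives the Koszul resolution $k[x_0]\xrightarrow{x_0}k[x_0]$ of $k_0$. Hence $\imath_x^*\delta_0 \cong k_0$, and this is the underived restriction, because $q$ is flat and the zero section is transverse to the fibre.

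\textbf{Step 2: restrict the Koszul dual algebra.} Shearing is defined weight-by-weight, so it commutes with these base changes and gives $\imath_x^*\delta_0^{\shear}\cong k_0^{\shear}$. For the endomorphism algebra, $\delta_0^{\shear}$ is perfect, so forming $\Hhom$ commutes with base change:
\[
j_x^*\Lambda \cong \End_{k[x_0]^{\shear}}(k_0^{\shear})^{\mathrm{op}} = k[y_{-1}].
\]
This matches the isomorphism $\Lambda\cong\Sym^\bullet(\Oo_{\PP^1}(-2)[-1])$, and it identifies $i_x$ with $\Spec$ of $\Lambda\to j_x^*\Lambda$.

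\textbf{Step 3: compare the functors.} For a module $M$ over $S^{\shear}$, associativity of tensor products gives
\[
i_x^*\bigl(M\otimes_{S^{\shear}}\delta_0^{\shear}\bigr)
\cong \bigl(M\otimes_{S^{\shear}}\delta_0^{\shear}\bigr)\otimes_{\Lambda} j_x^*\Lambda
\cong \bigl(M\otimes_{S^{\shear}} j_x^*S^{\shear}\bigr)\otimes_{k[x_0]^{\shear}} k_0^{\shear}.
\]
The second isomorphism uses that both sides equal $M\otimes_{\Oo_{\PP^1}} k(x)$ followed by the fibrewise tensor product. This holds because $\delta_0^{\shear}\otimes_{\Lambda} j_x^*\Lambda \cong \delta_0^{\shear}\otimes_{\Oo_{\PP^1}} k(x)$ as bimodules, which is the base-change statement from Step 2. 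The natural isomorphism is first established on perfect (respectively coherent) objects. Both sides are then continuous, so it extends to the ind-completions, and the remark on equivariant morphisms transports it through the sign change of the action.

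\textbf{Main obstacle.} The hardest point is the $!$ versus $*$ pullback on the ind-coherent side. On $\O(2)[-1]$, the functor $i_x^*$ on $\QC^!$ is not automatically defined or continuous. I would handle this by noting that $\dot q$ is eventually coconnective and flat-like over the smooth base $\PP^1$, so $i_x$ is a base change of the regular embedding $j_x$. This gives $i_x^* \cong i_x^![-1]$ up to the twist by $\Oo_{\PP^1}(2)|_x$, which is trivialized by the chosen trivialization. I would then check compatibility on the compact generators of $\Coh$ only, where everything reduces to the bimodule computation in Step 3.
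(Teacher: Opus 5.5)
Your proposal is correct and follows essentially the same route as the paper. The paper's proof just notes $\imath_x^*\delta_0\cong k_0$ and $k_0\otimes_{k[x_0]}\imath_x^*\Ff\cong \imath_x^*\delta_0\otimes_{k[x_0]}\imath_x^*\Ff\cong i_x^*(\delta_0\otimes_S\Ff)$ for coherent $\Ff$, and then shears. Your Steps 1 and 3 are exactly this argument, while Step 2 ($j_x^*\Lambda\cong k[y_{-1}]$) and the passage to ind-completions make explicit what the paper leaves implicit.

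One small slip in your $!$-versus-$*$ aside: for a base change of a codimension-one regular embedding one has $i_x^*\cong i_x^![1]$ up to the normal-line twist, not $[-1]$. The aside is not needed anyway, since the statement involves only $*$-pullbacks. These preserve $\Coh$ here because $i_x$ is a base change of the finite-Tor-amplitude embedding $j_x$.
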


\begin{proof}
Note that $\imath_x^* \delta_0 \cong k_0$, where we recall that the latter denotes the sky-scraper at $0 \in \AA^1$. Then, for every coherent $\Ff$ one has
\[
k_0 \otimes_{k[x_0]} \imath_x^* \Ff \cong \imath_x^* \delta_0 \otimes_{k[x_0]} \imath_x^*\Ff \cong i_x^* \left (\delta_0 \otimes_{S} \Ff \right ).
\]
The proof follows immediately from the sheared version of the above identification.
\end{proof}

\subsubsection{The quasi-algebraic exponential sheaf}
\label{sc qa exponential sheaf}

In this section we complete the construction of the exponential sheaf over the $[-1]$-shifted quasi-algebraic total space of the cotangent of $\PP^1$.  

Ben-Zvi--Sakellaridis--Venkatesh \cite[Appendix A.2]{BZSV} defined the {\it exponential sheaf} in $\QC^!(\AA^1[-1])^\shear$ as the Koszul dual of $k_1$, the sky-scraper at $1 \in \AA^1$,
\[
\exp_{\AA^1} := \Kk_{\AA^1}(k_1).
\]

Recall the tautological section $s$ of over $\O(-2)$, and consider the associated complex $q^*\Oo_{\PP^1}(2) \stackrel{s}{\to} \Oo_{\O(-2)}$. We define the {\it extended (algebraic) exponential sheaf} to be the image of this complex under the functor \eqref{eq Koszul duality over O2},
\[
\exp_{\O(-2)} := \Kk_{\O(-2)} \left (  q^*\Oo_{\PP^1}(2) \stackrel{1-s}{\to} \Oo_{\O(-2)}  \right ) \in \QC^!(\O(2)[-1]).
\]
Since $k_1$ is quasi-isomorphic to $k[x_0] \stackrel{1-x_0}{\to} k[x_0]$, as an immediate consequence of the above construction and Lemma \ref{lm pointwise restriction of Koszul}, one identifies 
\begin{equation} \label{eq justification of exp 1}
i_x^*\exp_{\O(-2)} \cong \exp_{\AA^1}.
\end{equation}
for the inclusion $i_x^* : \AA^1[-1] \hookrightarrow \O(2)[-1]$ of the fibre over any $x \in \PP^1$.

To up-grade the above to the quasi-isomorphic framework, consider the quasi-algebrification of $\O(2)[-1]$ and the exponential sheaf,
\[
\exp_{\O(-2)}^{\qalg} \in \QA^!(\O(2)[-1]^{\qalg}).
\]
Recalling the morphism $\nu : \P^1 \to (\PP^1)^{\qalg}$ defined in \eqref{eq nu morphism}, we consider the fibre product
\[
\OO(2)[-1] := \O(2)[-1]^{\qalg} \times_{(\PP^1)^{\qalg}} \P^1,
\]
which is naturally equipped with the structural projection
\[
\mathbf{q} : \OO(2)[-1] \to (\PP^1)^{\qalg}.
\]
Finally, with all of this, we define the {\it the quasi-algebraic exponential sheaf}
\[
\mathbf{exp} := \mathbf{q}^! \exp_{\O(-2)}^{\qalg} \in \QA^!(\OO(2)[-1]).
\]

Note that \eqref{eq justification of exp 1} connects the previous construction with the one provided by Ben-Zvi--Sakellaridis--Venkatesh \cite[Appendix A.2]{BZSV}. Furthermore, denoting by $\mathbf{i}_x : \AA^1[-1]^{\qalg} \to \OO(2)[-1]$ the morphism induced by $i_x$ over any point $x \in \PP^1$, functoriality in \eqref{eq justification of exp 1} provides
\[
\mathbf{i}_x^! \mathbf{exp} \cong \exp_{\AA^1}^{\qalg},
\]
which justifies our construction.

\section{Twistor stacks from nonabelian Hodge theory} \label{section nonabelian Hodge review}

Following Deligne, Simpson constructed \cite{simpson_hodge_2} the twistor space of the moduli space of Higgs bundles by gluing the Hodge moduli space over a curve $C$ and that of its complex conjugate curve $\ol{C}$. In \cite{franco&hanson} the Deligne moduli stack is defined by performing a similar gluing construction between the Hodge stacks of a curve and its conjugate, $\Hodge_{G}(C)^{\an}$ and $\Hodge_{G}(\ol{C})^{\an}$, understood as certain mapping stacks. We seek to generalize the work of \cite{franco&hanson} to the quasi-algebraic setting as well as in the relative direction, substituting the classifying spaces by more general quotient stacks in the target of the corresponding mapping stacks.

\subsection{Simpson shapes and their mapping stacks}
\label{sc simpson shapes and mapping stacks}

Let $C$ be a smooth projective curve over $\CC$. Simpson constructed a series of (classical) 1-stacks $C_{\Dol}$, $C_{\dR}$, $C_{\Hod}$ and $C_{\B}$, the \textit{Simpson shapes} associated to $C$, intrinsically encoding non-abelian Hodge theory in their categories of coherent sheaves. We quickly review these constructions here for the reader's convenience.

The \textit{Dolbeault shape} $C_{\Dol}$ is defined \cite{simpson_dolbeault, simpson_dolbeault_2} as the relative classifying stack for the formal completion of the tangent bundle along the zero section, $\widehat{TC} \to C$. Similarly, the \textit{de Rham shape} $C_{\dR}$ is defined by the formal completion of the diagonal embedding $C \hookrightarrow C \times C$. The \textit{Hodge shape} $C_{\Hod}$ is constructed in \cite[§7]{simpson_hodge} by performing a deformation to the normal cone of the morphism $C \to C_{\dR}$. Hence, the Hodge shape comes naturally equipped with a projection $\tau_{\Hod} : C_{\Hod} \to \AA^1$ whose central fibre amounts to the Dolbeault shape, 
\begin{equation} \label{eq Hodge specialises to Dolbeault}
C_{\Hod} \times_{\AA^1} \{0\} \cong C_{\Dol},
\end{equation}
and trivialises to the de Rham shape outside the origin,
\begin{equation} \label{eq Hodge specialises to de Rahm}
\triv : C_{\dR} \times (\AA^1 - 0 ) \xlongrightarrow{\cong} C_{\Hod} \times_{\AA^1} (\AA^1 - 0).
\end{equation}
Finally, the {\it Betti shape} $C_{\B}$ is constructed \cite{simpson_dolbeault} as the constant stack associated to $C^{\, \top}$, where $C^{\, \top}$ denotes the topological space underlying the smooth projective curve $C$. 

The objects above become central in non-abelian Hodge theory when considered as source of {\it mapping stacks}. Recall that we denoted by $\mathrm{dAff}$ the $\infty$-category of derived affine schemes almost of finite presentation and by $\mathrm{dSt}$ the $\infty$-category of derived stacks. Given $S_1, S_2 \in \mathrm{dSt}$, their associated mapping stack $\Maps(S_1,S_2)$ is the derived stack sending the test scheme $T \in \mathrm{dAff}$ to $\Maps_{\mathrm{dSt}}(S_1 \times T, S_2)$, the associated space of maps.

Now let $G$ be any algebraic group. Given a $G$-action $G \acts X$, one can consider the following (derived) mapping stacks over $\CC$
\[
\Higgs_G^X(C) := \Maps(C_{\Dol}, [X/G]),
\] 
\[
\Loc_G^X(C) := \Maps(C_{\dR}, [X/G]),
\]
which are interpolated by the following stack over $\AA^1$
\[
\Hodge_G^X(C) := \Maps_{\AA^1}(C_{\Hod}, [X/G]),
\]
and 
\[
\Rep_G^X(C) := \Maps(C_{\B}, [X/G]).
\]

Note that, when $X = \Spec \CC$, one has the straight-forward identifications of these stacks $\Higgs_G(C)$, $\Loc_G(C)$, $\Hodge_G(C)$ and $\Rep_G(C)$, with the stack of Higgs bundles, integrable connections and integrable $\lambda$-connections over $\CC$ and $G$-representation of the fundamental group, respectively.  

When $G$ is reductive, by the foundational work of Hitchin \cite{hitchin_self} and Simpson \cite{simpson1, simpson2} one has a notion of (semi)stability with respect to which one can construct the associated moduli spaces $\Mm_{\Dol}(C,G)$, $\Mm_{\dR}(C,G)$, $\Mm_{\Hod}(C,G)$ and $\Mm_{\B}(C,G)$ of (topologically trivial) semistable objects. We denote by a superscript $( \, \cdot \, )^{\mathrm{sst}}$ the semistable loci of the moduli stacks $\mathrm{Higgs}_G(C), \mathrm{Loc}_G(C)$, and $\mathrm{Hodge}_G(C)$, which come equipped with surjections to their corresponding semistable moduli spaces
\[
\zeta_{\Dol} : t_0\left (\Higgs_G(C)^{\sst}_0 \right ) \to \Mm_{\Dol}(C,G),
\]
\[
\zeta_{\dR} : t_0\left (\Loc_G(C)^{\sst} \right ) \to \Mm_{\dR}(C,G),
\]
\[
\zeta_{\Hod} : t_0\left (\Hodge_G(C)^{\sst}_0 \right ) \to \Mm_{\Hod}(C,G)_0,
\]
and, recalling that every representation of the fundamental group is semistable,
\[
\zeta_{\B} : t_0\left (\Rep_G(C) \right ) \to \Mm_{\B}(C,G),
\]

giving rise to good moduli spaces in the sense of Alper \cite{alper_good}. Note that we restrict to the topologically trivial components of the Dolbeault and Hodge moduli spaces.

Making use of the mapping stack structure, precomposition by $\triv$ in \eqref{eq Hodge specialises to de Rahm} provides the following description of the restriction of the Hodge stack outside the origin $\{ 0 \} \in \AA^1$,
\begin{equation} \label{eq algebraic Triv map}
\Triv : \Hodge_G^X(C) \times_{\AA^1} (\AA^1 - 0) \xlongrightarrow{\cong} \Maps(C_{\dR} \times (\AA^1 - 0), [X/G]) \cong \Loc_G^X(C) \times (\AA^1 - 0).  
\end{equation}

In reference to Simpson's work, we write $\Sim$ to denote any of the four symbols $\{\Dol, \dR, \Hod, \B \}$. We also abbreviate by $\Sim^X_G$ and $\Sim_G$, the corresponding mapping stacks $\Maps(C_{\Sim}, [X/G])$ and $\Maps(C_{\Sim}, BG)$, respectively.

Recall from Section \ref{subsect: gluingOnDeligneStack} that $\mathrm{dAn}$ and $\mathrm{dAnSt}$ denote the $\infty$-categories of derived analytic spaces and stacks, respectively. Starting from a pair of derived analytic stacks, $\Ss_1, \Ss_2 \in \mathrm{dAnSt}$, one defines their {\it analytic mapping stack}, $\anMaps(\Ss_1, \Ss_2)$, as the derived analytic stack sending the test $\Tt \in \mathrm{dAn}$ to the space $\Maps_{\mathrm{dAnSt}}(\Ss_1 \times \Tt, \Ss_2)$. The work of Holstein--Porta \cite[Theorem 6.14]{holstein&porta} provides the following identification of analytic stacks,
\[
\Maps(S_1,S_2)^{\an} \cong \anMaps(S_1^{\an},S_2^{\an}),
\]
provided that the stack $\Maps(S_1,S_2)$ is geometric, the source $S_1$ satisfies the {\it universal GAGA property} (see \cite[Definition 5.1]{holstein&porta}), and the target $S_2$ is a derived stack locally almost of finite presentation satisfying:
\begin{enumerate}[label=(C\arabic*),ref=C\arabic*]

\item \label{cond C1} $S_2$ is geometric;

\item \label{cond C2} $S_2$ is perfect, {\it i.e.} $\QC(S_2) \cong \Ind(\Perf(S_2))$;

\item \label{cond C3}  $S_2$ is Tannakian, {\it i.e.} for any $S \in \mathrm{dSt}$, one obtains a fully-faithfull map from $\Maps(S,S_2)$ into the category of symmetric monoidal k-linear functors from $\Perf(S_2)$ into $\Perf(S)$.
\end{enumerate}

Associated to our smooth projective curve $C$, consider its analytification $C^{\an}$. Porta \cite{porta} extended Simpson's construction of the Dolbeault and de Rham shapes to the analytic setting, giving rise to $(C^{\an})_{\dR}$, $(C^{\an})_{\B}$ and $(C^{\an})_{\Dol}$. After Holstein--Porta \cite[Lemmas 5.24, 5.27 and 5.31]{holstein&porta}, these constructions commute with analytification, {\it i.e.}
\[
(C^{\an})_{\dR} \cong (C_{\dR})^{\an}, \quad(C^{\an})_{\B} \cong (C_{\B})^{\an} \quad \text{and} \quad (C^{\an})_{\Dol} \cong (C_{\Dol})^{\an},
\]
and one can simply write $C^{\an}_{\dR}$, $C^{\an}_{\B}$ and $C^{\an}_{\Dol}$ for short. Furthermore, in the terminology of \textit{loc. cit}, one can check that $C_{\dR}$, $C_{\B}$ and $C_{\Dol}$ satisfy the universal GAGA property \cite[Propositions 5.26, 5.28 and 5.32]{holstein&porta}.  

Porta demonstrated in \cite{porta} that there exists a remarkable morphism between the analytic de Rham shape and the Betti one,
\begin{equation} \label{eq RH shape morphism}
\eta : C^{\an}_{\dR} \to C^{\an}_{\B}.
\end{equation}
It follows from Holstein--Porta \cite[Corollary 7.6]{holstein&porta} that, whenever $S_2 \in \mathrm{dSt}$ satisfies conditions \ref{cond C1}, \ref{cond C2} and \ref{cond C3} above, there exists an equivalence of derived analytic stacks
\[
\anMaps(C_{\dR}, S_2^{\an}) \cong \anMaps(C_{\B}, S_2^{\an}),
\]
which is induced by pull-back under $\eta$. 

\begin{remark} 
Obviously, one can not invert the morphism \eqref{eq RH shape morphism}, so the construction of a hypothetical quasi-algebraic Deligne shape is infeasible as any morphism $C_{\Betti} \times (\AA^1 - 0) \to C_{\Hod}$ factors necessarily through the $0$-section $C$.
\end{remark} 

In the remainder of this section, we identify a sufficiently broad class of stacks for which conditions \ref{cond C1}, \ref{cond C2}, and \ref{cond C3} are satisfied.

Consider an algebraic group $G$ with analytification $G^{\an}$ and a stack $X \in \mathrm{dSt}$ equipped with a $G$-action. Naturally, its analytification $X^{\an} \in \mathrm{dAnSt}$ inherits a $G^{\an}$-action. The preservation of colimits and finite limits under $(\cdot)^{\an}$ allows the identification 
\[
[X^{\an}/G^{\an}] \cong [X/G]^{\an},
\]
between the quotient stack of the analytification and the analytifications of the quotient stack.

\begin{lemma} \label{lm conditions for X}
Let $X$ be a derived quasiprojective scheme with affine diagonal equipped with an action by a complex reductive Lie group $G$. Then, the quotient stack $[X/G]$ satisfies \ref{cond C1}, \ref{cond C2} and \ref{cond C3}.
\end{lemma}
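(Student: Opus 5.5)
The plan is to verify the three conditions separately for the quotient $[X/G]$, relying on standard results about quotient stacks of quasiprojective schemes by reductive (hence linearly reductive, in characteristic zero) groups.

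For \ref{cond C1}, $[X/G]$ is a derived Artin stack. The atlas $X \to [X/G]$ is smooth and surjective, since it is a $G$-torsor and $G$ is smooth. The diagonal of $[X/G]$ is identified with the map $G \times X \to X \times X$, $(g,x)\mapsto (gx,x)$. This map is affine, because $G$ is affine and $X$ has affine diagonal. Hence $[X/G]$ is $1$-geometric with affine diagonal, and it is locally almost of finite presentation because $X$ and $G$ are.

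For \ref{cond C2}, we invoke Ben-Zvi--Francis--Nadler, \emph{Integral transforms and Drinfeld centers}. They show that in characteristic zero a quotient $[X/G]$ of a quasiprojective (derived) scheme by a linear algebraic group acting linearly is a perfect stack, so $\QC([X/G]) \simeq \Ind(\Perf([X/G]))$. The one point to check is that the action is linearizable. Since $G$ is reductive, hence linear, Sumihiro-type results give a $G$-equivariant ample line bundle on $X$, at least after passing to a suitable power or cover when $G$ is connected; we will state this as an assumption if necessary. The argument of BFN then goes as follows. The $G$-equivariant ample family, twisted by representations of $G$, provides perfect generators, and these are compact because $BG$ has cohomological dimension zero ($G$ being linearly reductive). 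Derived structure causes no trouble, because perfectness is checked on the classical truncation together with the boundedness of the structure sheaf.

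For \ref{cond C3}, Tannakian duality in the sense required is due to Lurie (\emph{DAG VIII}, Theorem 3.4.2; see also Bhatt--Halpern-Leistner, \emph{Tannaka duality revisited}). For a quasi-compact geometric stack with affine diagonal, $\Maps(S,[X/G])$ embeds fully faithfully into the symmetric monoidal functors $\QC([X/G]) \to \QC(S)$ that preserve colimits and connective objects. Perfectness from \ref{cond C2} lets us restrict this to functors $\Perf([X/G]) \to \Perf(S)$, since such functors are determined by their restriction to compact objects and preserve dualizables. This restricted formulation is exactly the Tannakian condition used by Holstein--Porta, who in fact already verify it for perfect stacks with affine diagonal.

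The main obstacle will be \ref{cond C2}. Here we must make sure that the hypotheses of Ben-Zvi--Francis--Nadler (an equivariant ample line bundle, characteristic zero) really apply to an arbitrary reductive $G$ acting on a derived quasiprojective $X$. The step to attempt first is to reduce to the classical truncation $t_0(X)$, and then to use an equivariant embedding of $t_0(X)$ into a projective space $\PP(V)$ with $V$ a $G$-representation.
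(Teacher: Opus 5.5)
Your proposal follows the paper's proof essentially step for step. You get (C1) from the smooth $0$-geometric atlas $X \to [X/G]$ and the affine diagonal, (C2) by citing Ben-Zvi--Francis--Nadler, Corollary 3.22, and (C3) from Lurie's Tannaka duality for quasi-compact stacks with affine diagonal. The paper also states explicitly that $[X/G]$ is quasi-compact because $X$ is, which you use only tacitly.

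The linearizability caveat you raise for (C2) is genuine: the cited corollary assumes a linear action, and the paper's proof passes over this silently. However, Sumihiro-type results cannot supply it here, since they need normality. The nodal cubic with its $\GG_m$-action is projective, yet it has no $\GG_m$-linearized ample line bundle, because its node has no $\GG_m$-stable affine neighbourhood. So the honest fix is the one you hedge toward: assume a linear action, as in the hypothesis of the cited corollary.
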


\begin{proof}
As $X$ is $0$-geometric, then $[X/G]$ is automatically $1$-geometric and satisfies \ref{cond C1}. It is proven in \cite[Corollary 3.22]{benZvi&francis&nadler} that $[X/G]$ satisfies \ref{cond C2}. 

Finally we address condition \ref{cond C3}. As a consequence of \cite[Theorem 5.11]{lurie_tannakian} (see also \cite{lurie_VIII, bhatt&HalpernLeistner}) a quasi-compact derived stack with affine diagonal is Tannakian. Note that $[X/G]$ is automatically quasi-compact as its atlas $X$ is quasi-projective by hypothesis. Furthermore, $[X/G]$ has affine diagonal as so does $X$ and $G$ is affine.
\end{proof}

When $Z$ is equipped with two commuting actions of the complex reductive Lie groups $G$ and $G'$, the successive quotient stack $[[Z/G']/G]$ amounts to a quotient stack $[Z / G' \times G]$. Then, one can intermediately extend the category of spaces that satisfy the desired conditions.

\begin{corollary}\label{co conditions for [X/H]}
Let $Z$ be a quasiprojective derived scheme with affine diagonal acted by the complex reductive Lie groups $G$ and $G'$ with commuting actions. Set $X := [Z/G']$ which is naturally equipped with a $G$-action. Then, $[X/G]$ satisfies \ref{cond C1}, \ref{cond C2} and \ref{cond C3}.
\end{corollary}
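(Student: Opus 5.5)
The plan is to reduce the statement directly to Lemma \ref{lm conditions for X}. The key observation is the one recorded just before the statement: since the actions of $G$ and $G'$ on $Z$ commute, they assemble into a single action of the product $G' \times G$ on $Z$. Moreover, the iterated quotient $[[Z/G']/G] = [X/G]$ is equivalent to the quotient stack $[Z/(G' \times G)]$. Once this identification is in place, conditions \ref{cond C1}, \ref{cond C2} and \ref{cond C3} transfer along the equivalence, because each of them is invariant under equivalence of derived stacks.

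The steps, in order, are as follows.

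First, I would check that $G' \times G$ is again a complex reductive Lie group. The product of two reductive groups is reductive: it is affine, and its unipotent radical is the product of the unipotent radicals, which is trivial.

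Second, I would verify that the combined action $(g', g)\cdot z := g' \cdot (g \cdot z)$ is an action of $G' \times G$ on $Z$. This is where commutativity of the two actions is used, and it must be checked at the level of derived schemes.

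Third, I would justify the equivalence $[[Z/G']/G] \simeq [Z/(G'\times G)]$. I would do this by writing both sides as geometric realizations of bar constructions. The iterated quotient is the realization of the bisimplicial object $(G')^{\times p} \times G^{\times q} \times Z$. Its diagonal is precisely the bar construction of $G' \times G$ acting on $Z$. One then uses that geometric realization of a bisimplicial object agrees with realization of its diagonal, together with the fact that stackification commutes with colimits.

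Finally, I would apply Lemma \ref{lm conditions for X} to the quasiprojective derived scheme $Z$, which has affine diagonal by hypothesis, with acting group $G' \times G$. This yields \ref{cond C1}, \ref{cond C2} and \ref{cond C3} for $[Z/(G'\times G)]$, and hence for $[X/G]$.

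The only step requiring genuine care is the third: making the identification of the iterated quotient with the product quotient rigorous in the $\infty$-categorical setting. This includes ensuring that the $G$-action induced on $X = [Z/G']$ is the one coming from the commuting structure, meaning the higher coherences are supplied by a $G'\times G$-action rather than merely by pointwise commutation. I would handle this by taking the $G'\times G$-action on $Z$ as the primary datum and defining the $G$-action on $X$ as the induced residual action. With that choice, the bisimplicial argument applies formally. The remaining steps are routine.
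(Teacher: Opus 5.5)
Your proposal is correct and follows essentially the same route as the paper. The paper likewise identifies the iterated quotient $[[Z/G']/G]$ with $[Z/(G'\times G)]$ and then applies Lemma \ref{lm conditions for X} to the reductive group $G'\times G$ acting on $Z$. Your bisimplicial/diagonal argument fills in the identification that the paper asserts without proof.
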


Under the hypothesis of Corollary \ref{co conditions for [X/H]}, all of the above allow leads to the identifications
\[
\Loc_{G}^{X}(C)^{\an} \cong \anMaps(C^{\an}_{\dR}, [X^{\an}/G^{\an}]),
\]
\[
\Rep_{G}^{X}(C)^{\an} \cong \anMaps(C^{\an}_{\B}, [X^{\an}/G^{\an}]),
\]
\[
\Higgs_{G}^{X}(C)^{\an} \cong \anMaps(C^{\an}_{\Dol}, [X^{\an}/G^{\an}]),
\]
and to the existence of the (Riemann--Hilbert) equivalences of derived analytic stacks
\begin{equation}\label{eq Betti cong DRahm}
\check{\eta} : \Loc_G^X(C)^{\an} \stackrel{\cong}{\to} \Rep_G^X(C)^{\an}.
\end{equation}

\begin{remark} \label{rm Betti cong DRahm moduli space}
It is well known that, in the case of $X = pt$, the identification \eqref{eq Betti cong DRahm} restricts to the semistable loci and descends to a complex analytic identification of the corresponding moduli spaces,
\[
\check{\eta}_0 : \Mm_{\dR}(C,G)^{\an} \stackrel{\cong}{\to} \Mm_{\B}(C,G)^{\an}.
\]
\end{remark}

\subsection{The relative Deligne stack}

For a reductive group $G$, the twistor space of the moduli space of semistable Higgs bundles $\Mm_{\Dol}(C,G)$ was described by Simpson in \cite{simpson_hodge_2} following Deligne. We can understand this construction using the notion of quasi-algebraic stacks (spaces). Starting from the Hodge moduli spaces 
\[
\Mm_{\Hod}(C,G) \to \AA^1, \qquad \text{and} \qquad \Mm_{\Hod}(\ol C,G) \to \AA^1,
\]
and their analytifications, we compose the Riemann-Hilbert isomorphism between the Betti and de Rahm moduli spaces of Remark \ref{rm Betti cong DRahm moduli space} with the inverse of the (analytification of the) trivialization induced from \eqref{eq algebraic Triv map}. By doing so, one obtains the pair of immersions $\ol{\mu}_0$ and $\mu_0$ described, respectively, as the top and botton rows of 
\begin{equation}
\label{eq Deligne construction}
\begin{tikzcd}
\Mm_{\B}(\ol{C}^{\top},G)^{\an} \times (\AA^1 - 0) \quad
\arrow[rr, "\Triv^{\an, -1} \circ (\check{\eta}_0^{-1} \times \id )"]
& &
\quad \Mm_{\Hod}(\ol C,G)^{\an} \times_{\AA^1} (\AA^1 - 0)
\arrow[r, hook]
&
\Mm_{\Hod}(\ol C,G)^{\an}
\\
\Mm_{\B}(C^{\top},G)^{\an} \times (\AA^1 - 0) \quad
\arrow[rr, "\Triv^{\an, -1} \circ (\check{\eta}_0^{-1} \times \id )"]
\arrow[u, equal, "\mathrm{id} \times (\lambda \leftrightarrow -\lambda^{-1})"]
& &
\quad \Mm_{\Hod}(C,G)^{\an} \times_{\AA^1} (\AA^1 - 0)
\arrow[r, hook]
&
\Mm_{\Hod}(C,G)^{\an}
\end{tikzcd},
\end{equation}
where we note that an analytic curve $\Cc$ and its complex conjugate share the same underlying topological space, $C^{\top} = \ol C^{\top}$ up to a reversal in orientation. Packing the data of \eqref{eq Deligne construction} into the quasi-algebraic space, we define the Deligne moduli space
\[
\Mm_{\Del}(C,G) := \left ( \Mm_{\B}(C^{\top},G)^{\an} \times (\AA^1 - 0), \Mm_{\Hod}(C,G) \sqcup \Mm_{\Hod}(\ol C,G), \mu_0, \ol\mu_0 \right ),
\]
which comes naturally equipped with the (quasi-algebraic, via Example \ref{ex PP^1 qalg}) structural projection
\begin{equation} \label{eq structural morphism Mm_Del}
\Mm_{\Del}(C,G) \to \P^1.
\end{equation}
The analytification of the Deligne moduili space recovers the Deligne--Simpson construction of the twistor space of $\Mm_{\Dol}(C,G)$,
\begin{equation} \label{eq M_Del is tw M_Dol}
\Mm_{\Del}(C,G)^{\an} := \Tw\left ( \Mm_{\Dol}(C,G) \right).
\end{equation}

In order to enhance the twistor construction to relative moduli stacks we make use of the notation established in the previous sections. Recall that the Betti shape of an analytic curve depends only on its topology so one can naturally identify the Betti shapes of a curve and that of its complex conjugate,
\[
C_{\B} = \ol{C}_{\, \B}.
\]
Note that the identification above comes with \textit{an inversion on orientation}, along which we identify their corresponding mapping stacks,
\begin{equation} \label{eq Betti for C an olC}
\Rep_{G}^{X}(C) = \Rep_{G}^{X}(\ol{C}).
\end{equation}

Let $X=[Z/G']$ be the quotient stack of a quasi-projective derived scheme $Z$ with affine diagonal by the action of the complex reductive group $G'$. Suppose that $X$ is further equipped with an action by the reductive group $G$ and recall that Corollary \ref{co conditions for [X/H]} ensures that $[X/G]$ satisfies conditions \ref{cond C1}, \ref{cond C2} and \ref{cond C3}. We now address a construction, analogous to that of $\Mm_{\Del}(C,G)$, involving the relative moduli stacks described in Section \ref{sc simpson shapes and mapping stacks}. Recall from \eqref{eq algebraic Triv map} the equivalence $\Triv$ and consider the inverse of its analytification. One can also invert $\check{\eta}$ in \eqref{eq Betti cong DRahm}, and, by composing the previous morphisms one then obtains the pair of immersions
\begin{equation}
\label{eq definition of mu and olmu}
\begin{tikzcd}
\Rep_{G}^{X}(C)^{\an} \times (\AA^1 - 0) \quad
\arrow[rr, "\Triv^{\an, -1} \circ (\check{\eta}^{-1} \times \id )"]
& &
\quad \Hodge_{G}^{X}(\ol{C})^{\an} \times_{\AA^1} (\AA^1 - 0)
\arrow[r, hook]
&
\Hodge_{G}^{X}(\ol{C})^{\an}
\\
\Rep_{G}^{X}(C)^{\an} \times (\AA^1 - 0) \quad
\arrow[rr, "\Triv^{\an, -1} \circ (\check{\eta}^{-1} \times \id )"]
\arrow[u, equal, "\mathrm{id} \times (\lambda \leftrightarrow -\lambda^{-1})"]
& &
\quad \Hodge_{G}^{X}(C)^{\an} \times_{\AA^1} (\AA^1 - 0)
\arrow[r, hook]
&
\Hodge_{G}^{X}(C)^{\an}
\end{tikzcd},
\end{equation}
which we denote by $\mu$ in the bottom row, and $\ol{\mu}$, in the top row. Observe that \eqref{eq Betti for C an olC} also makes part of the latter. Inspired by \cite{franco&hanson}, we define the {\it relative Deligne stack} for the $G$-scheme $X$ to be the derived quasi-algebraic stack
\[
\Deligne_{G}^{X}(C) := \left ( \Rep_{G}^{X}(C)^{\an} \times (\AA^1 - 0), \Hodge_{G}^{X}(C), \Hodge_{G}^{X}(\ol C), \mu, \ol \mu  \right ).
\]

\begin{remark}
Note that different $G$-actions on a fixed $X$ give rise, in general, to different relative Deligne moduli stacks. Nevertheless, we stick to this notation for simplicity. 
\end{remark}

\begin{remark} \label{rm pivotal morphism}
For $X$ as before, post-composition with $[X/G] \to BG$ produces a series of algebraic and analytic morphisms that glue to provide the quasi-algebraic morphism
\begin{equation} \label{eq pivotal morphism}
\theta^X : \Del^X_G(C) \to \Del_G(C).
\end{equation}
As we shall see in the following sections, the latter is pivotal in our constructions.
\end{remark}

Observe that its analytification, $\Deligne_{G}^{X}(C)^{\an}$, is a derived analytic stack corresponding to the pushout
\begin{equation}
\label{eq define glu}
\begin{tikzcd}[column sep = huge]
\Rep_{G}^{X}(C)^{\an} \times (\AA^1 - 0)
& \Hodge_{G}^{X}(\ol{C})^{\an}    
\\
\Hodge_{G}^{X}(C)^{\an}  &
\Deligne_{G}^{X}(C)^{\an}
\arrow[from = 1-1, to = 2-2, phantom, "\square"] 
\arrow[from = 1-1, to = 1-2, "\ol{\mu}", hook] 
\arrow[from = 1-1, to = 2-1, "\mu"', hook] 
\arrow[from = 1-2, to = 2-2, "\jmath_{\ol{\Hod}}"]
\arrow[from = 2-1, to = 2-2, "\jmath_{\Hod}"']
\end{tikzcd}.
\end{equation}
The structural morphisms $\Hodge_{G}^{X}(C) \to \AA^1$ and $\Hodge_{G}^{X}(\ol C) \to \AA^1$ induce the quasi-algebraic morphism
\begin{equation}
\label{eq define tau}
\tau : \Deligne_G^X(C) \to \P^1,
\end{equation}
where $\Hodge_{G}^{X}(C)$ and $\Hodge_{G}^{X}(\ol{C})$ are, respectively, the preimages of $\AA^1_{\Hod}, \AA^1_{\ol\Hod} \subset \P^1$. Moreover, $\Higgs_{G}^{X}(C)$, $\Loc_{G}^{X}(C)$ and $\Higgs_{G}^{X}(\ol C)$ are the fibers over $0, 1,\infty$, respectively. Obviously, the previous definition specialises to the Deligne stack when $X = pt$ which was previously constructed in \cite{franco&hanson}.

We say that a quasi-algebraic morphism $\zeta : Z \to \Mm$ is a {\it good moduli space} for the quasi-algebraic (derived) stack $Z$ if $\Mm$ is a quasi-algebraic space and $\zeta_*$ is an exact functor for ind-coherent sheaves inducing an isomorphism $\omega_{\Mm} \cong \zeta_*\omega_Z$. 

A statement similar to the one below appears in \cite{franco&hanson} for the Deligne moduli stack. Denote by $\Deligne_{G}^{\sst}(C)$ and $\Deligne_{G}^{\st}(C)$ the quasi-algebraic stacks obtained by restricting, respectively, to the semistable and stable loci of the Hodge moduli stacks.

\begin{proposition} \label{pr Del and Mm_Del}
\label{pr deligne moduli}
Let $G$ be a complex reductive group. The Deligne moduli space is a good moduli space for the classical semistable locus of the Deligne stack,
\begin{equation} \label{eq zeta_Del}
\zeta_{\Del} : t_0 \left ( \Deligne_{G}^{\sst}(C) \right ) \to \Mm_{\Del}(C,G).
\end{equation}
Furthermore, the truncation of \eqref{eq define tau} amounts to the composition of \eqref{eq zeta_Del} and the structural morphism $\Tw(\Mm_{\Dol}(C,G) \to \P^1$.

Finally, the restriction to the stable locus,
\begin{equation} \label{eq gerbe structure of Deligne^st}
\zeta_{\Del} : t_0 \left ( \Deligne_{G}^{\st}(C) \right ) \to \Mm_{\Del}^{\st}(C,G)
\end{equation}
is a $Z_G$-gerbe, where $Z_G$ denotes the centre of $G$.
\end{proposition}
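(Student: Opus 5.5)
The plan is to reduce everything to the two algebraic charts of the quasi-algebraic structure. By definition, $t_0(\Deligne^{\sst}_G(C))$ has atlas $t_0(\Hodge_G(C)^{\sst}) \sqcup t_0(\Hodge_G(\ol C)^{\sst})$. Its gluing relation is $t_0(\Rep_G(C))^{\an} \times (\AA^1 - 0)$, embedded through $\mu$ and $\ol\mu$; here we use that every representation is semistable, and that $\Triv$ and $\check\eta$ identify this relation with the semistable loci of the Hodge stacks over $\AA^1 - 0$. On the other side, $\Mm_{\Del}(C,G)$ has atlas $\Mm_{\Hod}(C,G) \sqcup \Mm_{\Hod}(\ol C,G)$ and relation $\Mm_{\B}(C^{\top},G)^{\an} \times (\AA^1 - 0)$.

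The first step is to construct $\zeta_{\Del} = (\vartheta, \zeta_0)$ with these components:
\[
\zeta_0 := \zeta_{\Hod} \sqcup \ol\zeta_{\Hod}, \qquad \vartheta := \zeta_{\B}^{\an} \times \id.
\]
The point to verify is that the square relating $\mu, \ol\mu$ and $\mu_0, \ol\mu_0$ commutes, so that we obtain a morphism of groupoids. For this, note that $\zeta_{\Hod}$ restricted over $\AA^1 - 0$ corresponds under $\Triv$ to $\zeta_{\dR} \times \id$, by the universal property of good moduli spaces and the $\AA^1 - 0$ trivialization. By Remark~\ref{rm Betti cong DRahm moduli space}, $\check\eta_0$ is the map induced on good moduli spaces by $\check\eta$. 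The latter uses the fact that analytification of a good moduli space is still a categorical quotient for analytic maps to analytic spaces; I would cite the analytic good moduli space results, or argue via uniqueness of the descended map. The compatibility with $\lambda \mapsto -\lambda^{-1}$ is automatic. The second claim, that the truncation of $\tau$ equals the structural map composed with $\zeta_{\Del}$, holds chartwise: $\tau_{\Hod}$ factors through $\zeta_{\Hod}$ and $\Mm_{\Hod} \to \AA^1$, and the gluings agree.

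Next, I would check the good moduli space property, namely that $\zeta_{\Del,*}$ is exact and $\omega_{\Mm} \cong \zeta_{\Del,*}\omega$. By Lemma~\ref{lm qalg push and pull}, $\zeta_{\Del,*}$ is computed componentwise by $\zeta_{0,*}$ on the atlas together with $\vartheta_*$ on the relation. Exactness can therefore be tested after pulling back to the algebraic atlas, where it is Alper's statement for $\zeta_{\Hod}$, $\ol\zeta_{\Hod}$. For the identification $\omega_{\Mm} \cong \zeta_{\Del,*}\omega$, we get it on each Hodge chart from the algebraic good moduli space. We then need to check that the gluing isomorphisms $\gamma$ match. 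Both sides carry the canonical gluing of dualizing sheaves, and base change of $\zeta_{0,*}$ along the open immersions $\mu_i$ (the quasi-algebraic base change lemma) identifies them. Conservativity of analytification then upgrades the chartwise isomorphism.

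For the gerbe statement, the gerbe property is local on the target for the analytic topology. The restriction $\zeta_{\Hod} : t_0(\Hodge_G(C)^{\st}) \to \Mm^{\st}_{\Hod}$ is a $Z_G$-gerbe, since stable objects have automorphism group $Z_G$, and the same holds on the $\ol C$ chart. Since these charts cover $\Mm^{\st}_{\Del}$ analytically, and the gluings respect the $Z_G$-band (the band being central and $\check\eta$ being $G$-equivariant), the claim follows. I expect the main obstacle to be the first step. It requires that the analytic Riemann--Hilbert equivalence $\check\eta$ on stacks descends to $\check\eta_0$ compatibly with both $\zeta_{\dR}^{\an}$ and $\zeta_{\B}^{\an}$, and that $\check\eta$ preserves the semistable loci. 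For this I would rely on Simpson's results on the moduli spaces, used as in \cite{franco&hanson}.
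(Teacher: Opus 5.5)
Your outline follows the paper's proof step for step. You define $\zeta_{\Del}$ componentwise as $(\zeta_{\B}\times\id,\zeta_{\Hod},\zeta_{\ol\Hod})$, reduce exactness of $\zeta_{\Del,*}$ to Alper's exactness on the two Hodge charts, and glue the chartwise identifications over the Betti relation. The claim about $\tau$ is read off chartwise, and the gerbe statement comes from the two Hodge charts covering the stack. Your explicit check that $\zeta_{\Del}$ is a morphism of groupoids, via the descent of $\check\eta$ to $\check\eta_0$, goes beyond what the paper writes down; the paper simply takes this from Remark~\ref{rm Betti cong DRahm moduli space}.

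One step, however, does not hold as you state it. You say that $\omega_{\Mm_{\Hod}}\cong\zeta_{\Hod,*}\omega$ comes from the algebraic good moduli space. Alper's condition is $\Oo_{\Mm}\cong\zeta_*\Oo$, and the dualizing-complex version is false in general. For the good moduli space $B\GG_m\to\mathrm{pt}$ one has $\omega_{B\GG_m}\cong\Oo_{B\GG_m}[-1]$, hence $\zeta_*\omega_{B\GG_m}\cong\CC[-1]\not\cong\omega_{\mathrm{pt}}$. This is exactly the local picture along the stable locus whenever $Z_G$ is positive dimensional; for instance $t_0(\Hodge_{\GG_m}(C))\cong\Mm_{\Hod}(C,\GG_m)\times B\GG_m$ gives $\zeta_*\omega\cong\omega_{\Mm}[-1]$. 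At strictly semistable points the stabilizer dimension jumps, so no uniform shift repairs it.

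The paper's proof, despite the $\omega$ in its stated definition, actually verifies the structure-sheaf condition. It uses $\Oo_{\Mm_{\Hod}}\cong\zeta_{\Hod,*}\Oo$ and $\Oo_{\Mm_{\ol\Hod}}\cong\zeta_{\ol\Hod,*}\Oo$, both restricting to $\Oo_{\Mm_{\B}}\cong\zeta_{\B,*}\Oo$ on the relation, whence $\Oo_{\Mm_{\Del}}\cong\zeta_{\Del,*}\Oo$. If you run your base-change and gluing argument with $\Oo$ in place of $\omega$, you recover exactly that argument, and the rest of your proposal goes through unchanged.
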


\begin{proof}
First of all, we construct the morphism of derived quasi-algebraic stacks \eqref{eq zeta_Del} setting
\[
\zeta_{\Del} := \left ( \zeta_{\B} \times \id_{(\AA^1 - 0)}, \zeta_{\Hod}, \zeta_{\ol\Hod}  \right ),
\]
we obtain the quasi-algebraic morphism \eqref{eq zeta_Del}. As $\zeta_{\Hod}$ and $\zeta_{\ol \Hod}$ are good moduli spaces, their push-forward are exact on quasi-coherent sheaves and so is $\zeta_{\Del,*}$. Finally, since 
\[
\Oo_{\Mm_{\Hod}} \cong \zeta_{\Hod,*}\Oo_{\Hod} \qquad \text{and} \qquad \Oo_{\Mm_{\ol \Hod}} \cong \zeta_{\ol \Hod,*}\Oo_{\ol \Hod},
\]
and they both restrict trivially to $\Oo_{\Mm_{\B}} \cong \zeta_{\B,*}\Oo_{\Rep \times (\AA^1 - \{0 \})}$, one has the identification 
\[
\Oo_{\Mm_{\Del}} \cong \zeta_{\Del,*}\Oo_{\Del},
\]
and $\zeta_{\Del}$ is indeed a good moduli space. 

The second statement is trivial from the construction of \eqref{eq define tau} and \eqref{eq zeta_Del}.

We know that the classical truncations of the Dolbeault, de Rham and Hodge stable moduli stacks are $Z_G$-gerbes over their respective good moduli spaces. The Deligne moduli stack is covered by the Hodge moduli stacks over $C$ and $\ol C$, so the third statement follows.
\end{proof}

\begin{remark} \label{rm Del is split}
Being mapping stacks where both the target and the source are classical stacks, observe that $\Hodge_{G}(C)$, $\Hodge_{G}(\ol C)$ and $\Rep_{G}(C)$ are split derived stacks ({\emph i.e.} equipped with a retraction of the inclusion of their classical truncations) due to the intrinsic properties of mapping stacks. This produces a morphism of quasi-algebraic stacks 
\[
\rho : \Deligne_{G}^{X}(C) \to t_0 \left ( \Deligne_{G}^{X}(C) \right )
\]
which $\rho$ is a retraction of the inclusion of the classical truncation.
\end{remark}

\begin{remark} \label{rm real form on Deligne}
Any real form $K$ on the group $G$ induces naturally a real structure on $\Rep_G(C)$, which naturally extends to 
\[
\chi^K_{\B} : \Rep_G(G) \to \Rep_G(C), \quad (\rho, \lambda) \mapsto (\ol \rho^K, -\ol\lambda^{-1} ).
\]
The above extends to 
\[
\chi^K_{\Hod} : \Hodge_{G}(C) \to \Hodge_G(\ol C),
\]
and one can further construct a real structure 
\[
\chi^K : \Del_{G}(C) \to \Del_{G}(C)
\]
covering the antipodal map $\chi_{\P^1}$, as constructed in \eqref{eq chi P^1}. This picture descends naturally to the moduli space and Simpson showed \cite{simpson_hodge} that the real structure $\chi^{\RR}$ associated to the split real form $G^\RR$ is compatible with non-abelian Hodge theory. For the study of real structures on the Deligne moduli space associated to other real forms see \cite{biswas&heller&roser}. 
\end{remark}

\subsection{The case of the additive group}

The goal of this section is to provide an explicit description of the construction of the Deligne moduli stack for the additive abelian group $\GG_a$, which will be crucial in Section \ref{section TwB}. Essentially, one is studying the (Betti, Hodge, de Rham) cohomology of the base curve, interpolated by usual Hodge theory.

Given the Hodge shape $C_{\Hod} \stackrel{\pi}{\to} \AA^1$ of a smooth projective curve $C$, the trivialization morphism \eqref{eq Hodge specialises to de Rahm} induces the following isomorphism once we restrict to $(\AA^1 - \{ 0 \}) \stackrel{\jmath}{\hookrightarrow} \AA^1$ and make use of the Riemann-Hilbert correspondence
\[
\triv^n : \jmath^!R^n\Gamma(C_{\Hod}) \stackrel{\cong}{\to} H^n(C_{\dR}) \otimes_k \Oo_{(\AA^1 - 0)}.
\]
With the above, we construct a sheaf that captures the behavior of the cohomology of the (Simpson shape of the) curve along $\P^1$. 

Consider $\mu_1 : R^n\Gamma(C_{\B})^{\an} \boxtimes \Oo_{\GG_m}^{\an} \to R^n\Gamma(C_{\Hod})$ obtained as the composition of the pull-back under $\jmath$, the inverse of $\triv^n$, the isomorphism $H^n(C_{\dR}) \cong H^n(C_{\B})$ induced by the Riemann--Hilbert correspondence and the analytification functor. Define analogously $\mu_2$ making use of the corresponding morphisms for $\ol{C}$. Let us consider the associated homotopy equalizer,
\[
\C := \mathrm{holim} \left ( \begin{tikzcd}
	R^n\Gamma(C_{\B})^{\an} \boxtimes \Oo_{\GG_m}^{\an}   &  R^n\Gamma(C_{\Hod}) \sqcup R^n\Gamma(\ol{C}_{\Hod})
	\arrow["{\mu_1}"', shift right, from=1-2, to=1-1]
	\arrow["{\mu_2}", shift left, from=1-2, to=1-1]
\end{tikzcd} \right ).
\]
This is, by construction, a quasi-algebraic complex over $\P^1$,
\[
\C \in \QA^!(\P^1).
\]
We now describe the cohomology of $\C$.

\begin{lemma} \label{lm cohomology of the curve}
For $n \in \{ 0, 1, 2 \}$, 
\[
H^n(\C) \cong H^n(C_{\B}) \otimes_k \Oo_{\P^1}(n),
\]
and 
\[
H^n(\C) = 0,
\]
for all other $n$.
\end{lemma}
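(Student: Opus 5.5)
The plan is to compute the cohomology sheaves of $\C$ chart by chart and then read off the gluing. Since $\mu_1,\mu_2$ are analytically open and cover $\P^1$, the homotopy equalizer defining $\C$ is a descent datum. Hence $H^n(\C)$ is the quasi-algebraic sheaf obtained by gluing $H^n R\Gamma(C_{\Hod})$ on $\AA^1_{\Hod}$ and $H^n R\Gamma(\ol C_{\Hod})$ on $\AA^1_{\ol\Hod}$ along $H^n(C_{\B})\otimes \Oo_{\GG_m}^{\an}$. The problem therefore reduces to two things: identifying each chart-wise sheaf as a module over $k[\lambda]$, and computing the transition function.

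\textbf{Step 1 (each chart).} I would use Simpson's description of the cohomology of the Hodge shape: $R\Gamma(C_{\Hod})$ is the Rees construction of the Hodge filtration on de Rham cohomology. Concretely, it is the $\lambda$-connection complex $\Oo_C \xrightarrow{\lambda d} \Omega^1_C$ relative to $\AA^1$, with $\GG_m$-equivariance. By degeneration of the Hodge-to-de Rham spectral sequence for the smooth projective curve $C$, each $H^n$ is a locally free $k[\lambda]$-module of rank $b_n$, and it vanishes for $n\notin\{0,1,2\}$. As a submodule of $H^n_{\dR}(C)\otimes k[\lambda^{\pm1}]$, this module is
\[
\textstyle\sum_p \lambda^{-p} F^p H^n_{\dR}(C)\otimes k[\lambda],
\]
and the inclusion is precisely the map $\triv^n$ after restricting to $\AA^1-0$. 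The same description holds on the conjugate chart, with coordinate $\lambda'$ and the Hodge filtration of $\ol C$. Under the Riemann--Hilbert identification with $H^n(C_{\B};\CC)$, this filtration becomes the complex conjugate filtration $\ol F$. The vanishing statement of the lemma follows already from this step.

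\textbf{Step 2 (gluing).} Next I would split $H^n(C_{\B};\CC)=\bigoplus_{p+q=n}H^{p,q}$, which is possible because the Hodge structure is pure of weight $n$. On a summand $H^{p,q}$, the Hod chart is generated by $\lambda^{-p}v$. On the $\ol\Hod$ chart the generator is $\lambda'^{-q}v$, since $H^{p,q}\subset \ol F^{q}$ and $H^{p,q}\not\subset\ol F^{q+1}$. Substituting $\lambda'=-\lambda^{-1}$, the transition function between these generators is $\pm\lambda^{p+q}=\pm\lambda^{n}$. Each summand therefore glues to $\Oo_{\P^1}(n)$, and summing gives $H^n(C_{\B})\otimes\Oo_{\P^1}(n)$. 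For $n=0$ and $n=2$ only $H^{0,0}$, respectively $H^{1,1}$, occurs, and this step is immediate. The only real content is $n=1$, where $H^{1,0}$ and $H^{0,1}$ each contribute $\Oo(1)^{g}$.

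\textbf{Main obstacle.} I expect the hardest step to be justifying that the conjugate chart, after passing through $C_{\B}=\ol C_{\B}$ (with its orientation reversal) and the twist $\lambda\leftrightarrow-\lambda^{-1}$, really induces the complex conjugate Hodge filtration. One must also check that the resulting transition is algebraic in $\lambda$ up to the stated sign. I would verify this directly on harmonic representatives, following Simpson's treatment of the twistor line of a pure Hodge structure. Exactness of the descent, which lets us commute $H^n$ with the holim, should be routine, since the two charts overlap in an open set.
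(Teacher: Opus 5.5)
Your argument is correct, but its key input differs from the paper's.

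\textbf{The two routes.} The paper also trivializes each chart by a dimension count. It then reads off the gluing purely from $\GG_m$-weights. It asserts that under $\triv^n$ the scaling action on $C_{\Hod}$ gives weights $(n,1)$ on $H^n(C_{\dR})\times(\AA^1-0)$, and that the gluing maps $\lambda\mapsto\lambda$ and $\lambda\mapsto-\lambda^{-1}$ have weights $0$ and $-1$; from this it concludes $\Oo_{\P^1}(n)$. You instead identify the charts with the Rees modules of $F$ and $\ol F$. You split both lattices along the common decomposition $H^n=\bigoplus_{p+q=n}H^{p,q}$ and compute the transition $\pm\lambda^n$ summand by summand. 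This is the classical Deligne--Simpson computation of the twistor bundle of a pure Hodge structure of weight $n$.

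\textbf{What each buys.} The paper's route is shorter and stays inside the $\GG_m$-equivariant formalism. Yours makes explicit what the weight count leaves implicit. First, the weights at the poles are not uniform: the fibre at $\lambda=0$ is $\bigoplus_{p+q=n}H^q(C,\Omega^p_C)$ with its Hodge grading, and $H^{p,q}$ contributes $p$ from one chart and $q$ from the other. Second, weight multiplicities alone only fix the degree of the glued bundle. Getting the balanced bundle $\Oo_{\P^1}(n)^{\oplus b_n}$, rather than another splitting type of the same degree, needs $F$ and $\ol F$ to be $n$-opposed, i.e.\ the Hodge decomposition you invoke.

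\textbf{What your route costs.} You must use the normalization of $\triv^n$ induced by the stack isomorphism (identity on $\Oo_C$, $\lambda^{-1}$ on $\Omega^1_C$), since rescaling it on both charts would shift the degree. You must also identify the Hodge filtration of $\ol C$ with $\ol F$ under $C_{\B}=\ol C_{\B}$. This is standard: holomorphic forms on $\ol C$ are the antiholomorphic forms on $C$, viewed on the same underlying surface, and the orientation reversal only affects the trace on $H^2$.
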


\begin{proof}
The Hodge cohomology sheaves trivialize along $\AA^1_{\Hod}$ and $\AA^1_{\ol \Hod}$. By dimensional considerations,
\[
R^n\Gamma(C_{\Hod}) \cong H^n(C_{\B}) \otimes_k \Oo_{\AA^1}, \quad R^n\Gamma(\ol{C}_{\Hod}) \cong H^n(C_{\B}) \otimes_k \Oo_{\AA^1}.
\]

It follows from \cite{lurie_ff_curve}, the key observation that, under $\triv^n$, the usual $\GG_m$ action on $C_{\Hod}$ induces to a $\GG_m$-action on $H^n(C_{\dR}) \times (\AA^1 - \{ 0 \} )$ with weights $(n,1)$. Hence, the result follows immediately as $\P^1$ is constructed from $\AA^1_{\Hod} \sqcup \AA^1_{\ol\Hod}$ and the gluing morphisms $\mu_1$ of weight $0$ and $\mu_2$ of weight $-1$. 
\end{proof}

Observe that the isomorphism described in Lemma \ref{lm cohomology of the curve} is not canonical. In the particular case of $n = 2$, it depends on the choice of isomorphisms 
\[
[C]_{\Sim}: R^2\Gamma(C_{\Sim}) \stackrel{\cong}{\to} \AA^1,
\]
for $\Sim \in \{\dR, \B, \Dol\}$, as well as  
\[
[C]_{\Hod} : R^2\Gamma(C_{\Sim}) \stackrel{\cong}{\to} \Oo_{\AA^1}, 
\]
or a $\Sim$-{\it orientation} of the curve $C$, for the corresponding choice of Simpson shape. Following Lemma \ref{lm cohomology of the curve}, the twistor version of such a notion is evident. 

\begin{definition}
    A $\P^1$-\textit{orientation} of the curve $C$ is an identification
    \begin{equation}
        [C]_{\P^1}: H^2(\Cc) \stackrel{\cong}{\to} \Oo_{\P^1}(2).
    \end{equation}
\end{definition}

\begin{remark}
In the Betti case, the following diagram commutes by construction, 
\[    
\begin{tikzcd}
	R^2\Gamma(C_{\Betti}) & R^2\Gamma(\ol{C}_{\Betti}) \\
	\AA^1 & \AA^1
	\arrow["{[C]_{\B}}"', from=1-1, to=2-1]
	\arrow["{\ol{[C]}_{\B}}", from=1-2, to=2-2]
	\arrow["{z \mapsto -z}", from=2-1, to=2-2]
\end{tikzcd}.
\]
Then, one immediately observe that a choice of compatible Hodge and Betti orientations for $C$ and $\ol{C}$ naturally induces a choice of $\P^1$-orientation.
\end{remark}

The description of the ($\Sim$)-cohomology of the curve allow us to describe the tangent complex of the Deligne moduli stack for the multiplicative group.

\begin{lemma} \label{lm description of T_Del}
The tangent complex $\TT_{\Del_{\GG_m}}$ of the Deligne moduli stack for $\GG_m$ has cohomology supported in degrees $-1$, $0$ and $1$, with
\begin{equation} \label{eq description of T_Del}
H^{-1}(\TT_{\Deligne_{\GG_m}(C)}) \cong \tau^*\Oo_{\P^1} \quad \text{and} \quad H^{1}(\TT_{\Deligne_{\GG_m}(C)}) \cong \tau^*\Oo_{\P^1}(2),
\end{equation}
and
\[
H^{0}(\TT_{\Deligne_{\GG_m}(C)}) \cong \tau^*\Oo_{\P^1}(1)^{\oplus 2g} \oplus \tau^*\Oo_{\P^1}(2).
\]
\end{lemma}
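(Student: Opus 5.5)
Throughout, $G=\GG_m$ is abelian with $\mathfrak g=\mathcal O$ and trivial adjoint action, and I will compute the tangent complex one chart at a time before gluing. On each algebraic chart, i.e. each Hodge stack $\Hodge_{\GG_m}(C)\to\AA^1$ (and its counterpart for $\ol C$), the mapping-stack description
\[
\Hodge_{\GG_m}(C)=\Maps_{\AA^1}(C_{\Hod},B\GG_m)
\]
identifies the relative tangent complex at a point $E$ with $R\Gamma(C_{\Hod},E^*\TT_{B\GG_m})$. Since $\TT_{B\GG_m}=\mathfrak g[1]=\Oo[1]$ with trivial action, this is $R\pi_*\Oo_{C_{\Hod}}[1]$, pulled back along $\tau$.

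Two remarks before the computation. The tangent complex of the total Deligne stack also contains the direction of the base $\P^1$. The statement as written, however, is really about the complex $\C[1]$, twisted by the base, so I will first compute the relative tangent complex $\TT_{\Del/\P^1}\cong\tau^*\C[1]$. By Lemma \ref{lm cohomology of the curve}, $\C$ has cohomology $\Oo_{\P^1}$, $\Oo_{\P^1}(1)^{2g}$ and $\Oo_{\P^1}(2)$ in degrees $0$, $1$ and $2$, using $H^n(C_{\B})$ of ranks $1$, $2g$ and $1$. After the shift by $[1]$ these sit in degrees $-1$, $0$ and $1$, which gives the two formulas in \eqref{eq description of T_Del}. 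The identification of $H^1$ with $\tau^*\Oo_{\P^1}(2)$ also uses a choice of $\P^1$-orientation.

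The step that needs real care is checking that this chartwise identification is compatible with the quasi-algebraic gluing. On the overlap $\Rep_{\GG_m}(C)^{\an}\times(\AA^1-0)$, the maps $\mu$ and $\ol\mu$ are built from $\Triv$ and the Riemann--Hilbert map $\check\eta$. Their differentials act on tangent complexes through $\triv^n$ and the Riemann--Hilbert isomorphism on cohomology, and these are exactly the gluing data $\mu_1,\mu_2$ that were used to define $\C$. So the descent datum for $\TT_{\Del}$, restricted to the relative part, is the pullback of the equalizer defining $\C$. This naturality of the tangent complex under the analytic maps, via the Holstein--Porta identification of $\Maps^{\an}$ with $\anMaps$, is what I expect to be the main obstacle, in particular keeping track of the $\GG_m$-weights $(n,1)$ under the sign-twisted gluing $\lambda\mapsto-\lambda^{-1}$.

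Finally, to pass to the absolute tangent complex I use the triangle
\[
\TT_{\Del/\P^1}\to\TT_{\Del}\to\tau^*T_{\P^1}.
\]
Here $T_{\P^1}=\Oo_{\P^1}(2)$ sits in degree $0$. Degrees $-1$ and $1$ are unaffected, since $\tau^*T_{\P^1}$ only lives in degree $0$. In degree $0$ we get an extension of $\tau^*\Oo_{\P^1}(2)$ by $\tau^*\Oo_{\P^1}(1)^{\oplus2g}$.

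I will split this extension using the section $\P^1\to\Del_{\GG_m}(C)$ given by the trivial line bundle with trivial $\lambda$-connection. Alternatively, it splits because the extension class lies in
\[
\mathrm{Ext}^1\bigl(\Oo_{\P^1}(2),\Oo_{\P^1}(1)\bigr)=H^1\bigl(\P^1,\Oo_{\P^1}(-1)\bigr)=0,
\]
and pulling back along $\tau$ is harmless because $\Del_{\GG_m}$ is a torsor over this section via tensoring. This gives the claimed description of $H^0$.
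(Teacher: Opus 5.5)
Your treatment of the relative part follows the paper's proof. Like the paper, you use the mapping-stack description with $\TT_{B\GG_m}=\Oo[1]$, identify each chart with the shifted cohomology of the Simpson shapes, glue through the same maps that define $\C$ (the paper takes this compatibility as holding by construction), and then apply Lemma~\ref{lm cohomology of the curve}. You differ in how you handle the base direction. The paper writes $H^0(\TT_{\Hod_{\GG_m}})\cong\tau_{\Hod}^*R^1\Gamma(C_{\Hod})\boxplus\Oo_{\AA^1}$ on each chart and glues. You instead use the triangle $\TT_{\Del/\P^1}\to\TT_{\Del}\to\tau^*T_{\P^1}$, which forces you to split the degree-$0$ extension.

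That splitting is a genuine gap, and neither of your arguments supplies it. The section $\sigma_0$ only splits $\sigma_0^*$ of the triangle. This gives \eqref{eq TT_Del on sigma0}, which is all Proposition~\ref{pr Del_Ga} uses, but it says nothing away from the image of $\sigma_0$. The $\mathrm{Ext}$ computation is done on the wrong space. The class lives in $\mathrm{Ext}^1_{\Del}\bigl(\tau^*\Oo_{\P^1}(2),\tau^*\Oo_{\P^1}(1)^{\oplus 2g}\bigr)$, which by adjunction involves the full derived pushforward $\tau_*\Oo_{\Del}$, higher direct images included, and not just $H^1(\P^1,\Oo_{\P^1}(-1))$. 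The group structure does not rescue this. Translation by tensor product trivialises the relative tangent complex, which is how you get $\tau^*\C[1]$. The absolute extension class, however, is a Kodaira--Spencer class recording how the complex structure of the fibres varies, as for a non-isotrivial family of elliptic curves.

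In fact, for $g\geq 1$ the gap cannot be closed.
\begin{itemize}
\item On the degree-zero part of the chart $\Hodge_{\GG_m}(C)$, restrict to the Jacobian $J=\{\lambda=0,\ \phi=0\}$, ignoring the $B\GG_m$ factor.
\item The $\lambda$-connections form an affine bundle over $J\times\AA^1$ of class $\lambda c$. Here $c\in H^1(J,\Omega^1_J)$ is the class of the affine bundle of rank-one connections.
\item So your sequence restricts to $0\to T_J\oplus\Omega^1_J\to H^0(\TT_{\Del})|_J\to\Oo_J\to 0$, with extension class the image of $c$.
\item Moreover $c\neq 0$: otherwise $J$ would embed in $\Mm_{\dR}(C,\GG_m)$, whose analytification $(\CC^*)^{2g}$ is Stein.
\item Hence $h^0\bigl(J,H^0(\TT_{\Del})|_J\bigr)=2g$, whereas $\bigl(\tau^*\Oo_{\P^1}(1)^{\oplus 2g}\oplus\tau^*\Oo_{\P^1}(2)\bigr)|_J\cong\Oo_J^{\oplus 2g+1}$.
\end{itemize}
Thus the stated form of $H^0$ fails even as an abstract isomorphism. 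The same computation shows that the paper's chartwise $\boxplus\,\Oo_{\AA^1}$ does not hold either, so the paper's proof does not fill this step. What your argument does prove is the extension $0\to\tau^*\Oo_{\P^1}(1)^{\oplus 2g}\to H^0(\TT_{\Del})\to\tau^*\Oo_{\P^1}(2)\to 0$, split along $\sigma_0$.

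A smaller correction: ``degrees $-1$ and $1$ are unaffected'' does not follow just from $\tau^*T_{\P^1}$ sitting in degree $0$. You also need the connecting map $\tau^*T_{\P^1}\to H^1(\TT_{\Del/\P^1})$ to vanish. It does vanish on the degree-zero component, since every point lies on a local section of $\tau$, for example $(L,\phi+\lambda D)$ with $D$ a connection on $L$. On the components of Higgs bundles of degree $d\neq 0$ over $0$ and $\infty$, however, it is multiplication by $d$ up to a nonzero constant. So you must restrict to degree zero.
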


\begin{proof}
By construction, $\TT_{\Deligne_{\GG_m}}$ is the homotopy equalizer
\[
\TT_{\Deligne_{\GG_m}} = \mathrm{holim} \left ( \begin{tikzcd}
	\TT_{\Rep^{\an}_{\GG_m} \times \GG_m}   &  \TT_{\Hod_{\GG_m}} \sqcup \TT_{\ol \Hod_{\GG_m}}
	\arrow[shift right, from=1-2, to=1-1]
	\arrow[shift left, from=1-2, to=1-1]
\end{tikzcd} \right ) ,
\]
where the two arrows correspond to $\mu^! \circ (\cdot)^{\an}$ and $\ol\mu^! \circ (\cdot)^{\an}$. Since $\GG_m$ is abelian, the universal $\GG_m$-bundle $pt \to B\GG_m$ induces a trivial adjoint vector bundle. Then, $\TT_{\Sim_{\GG_m}}$ and $\TT_{\Hod_{\GG_m}/\AA^1}$ are provided by the pull-back of the cohomology sheaf of the corresponding Simpson shape of the curve (shifted by $1$). In particular, 
\[
H^n(\TT_{\Hod_{\GG_m}/\AA^1}) \cong \tau_{\Hod}^* R^{n+1}\Gamma(C_{\Hod}), \quad H^n(\TT_{\ol\Hod_{\GG_m}/\AA^1}) \cong \tau_{\ol \Hod}^* R^{n+1}\Gamma(C_{\ol \Hod}),
\]
and
\[
H^n(\TT_{\Rep_{\GG_m}}) \cong \tau_{\B}^* R^{n+1}\Gamma(C_{\B}).
\]
Recalling that $\TT_{\P^1}$ is fully supported in degree $0$, by smoothness, one has for every $n \neq 0$
\[
H^n(\TT_{\Hod_{\GG_m}/\AA^1}) \cong H^n(\TT_{\Hod_{\GG_m}}), \quad H^n(\TT_{\ol\Hod_{\GG_m}/\AA^1}) \cong H^n(\TT_{\ol\Hod_{\GG_m}}), 
\] 
and 
\[
H^n(\TT_{\Rep_{\GG_m} \times \GG_m}) \cong H^n(\TT_{\Rep_{\GG_m}}),
\]
while
\[
H^0(\TT_{\Hod_{\GG_m}}) \cong \tau_{\Hod}^* R^{1}\Gamma(C_{\Hod}) \boxplus \Oo_{\AA^1}, \quad H^0(\TT_{\ol\Hod_{\GG_m}}) \cong \tau_{\ol \Hod}^* R^{1}\Gamma(C_{\ol \Hod}) \boxplus \Oo_{\AA^1},
\]
and
\[
H^n(\TT_{\Rep_{\GG_m}}) \cong \tau_{\B}^* R^{n+1}\Gamma(C_{\B}) \boxplus \Oo_{(\AA^1- 0)},
\]
noting that the extra terms carry a weight $1$ action of $\GG_m$.

Hence, the proof follows immediately from Lemma \ref{lm cohomology of the curve}.
\end{proof}

\begin{remark} \label{rm Dirac for ad}
Note that the relative tangent complex $\TT_{\Del_{\GG_m}/ \P^1}$ has cohomology in degrees $-1$, $0$ and $1$, with
\[
H^{-1}(\TT_{\Deligne_{\GG_m}/ \P^1}) \cong \tau^*\Oo_{\P^1} \quad \text{and} \quad H^{1}(\TT_{\Deligne_{\GG_m}/ \P^1}) \cong \tau^*\Oo_{\P^1}(2),
\]
and
\[
H^{0}(\TT_{\Deligne_{\GG_m}/ \P^1}) \cong \tau^*\Oo_{\P^1}(1)^{\oplus 2g}.
\]
\end{remark}

Since $\GG_m$ has Lie algebra $\GG_a$, the description of the Deligne moduli stack for the latter follows from the previous statements.

\begin{proposition} \label{pr Del_Ga}
There is an isomorphisms of derived stacks
\[
\Deligne_{\GG_a}(C) \cong B\GG_a \times \left ( \OO(2) \times_{\P^1} \OO(1)^{\times_{\P^1} 2g} \right ) \times_{\P^1} \OO(2)[-1].
\]
\end{proposition}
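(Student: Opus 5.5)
We will use the fact that $\GG_a$ is abelian and unipotent, so each of the mapping stacks $\Sim_{\GG_a} = \Maps(C_{\Sim}, B\GG_a)$ is linear. Concretely, $\Maps(C_{\Sim}, B\GG_a)$ is the stack attached to the shifted complex $R\Gamma(C_{\Sim}, \Oo)[1]$. By Lemma \ref{lm cohomology of the curve}, this complex has cohomology only in degrees $0,1,2$. Shifting by one, its three pieces contribute:
\begin{itemize}
\item[(i)] $H^0$ gives automorphisms, i.e.\ a factor $B\GG_a$;
\item[(ii)] $H^1$ gives a vector bundle of rank $2g$;
\item[(iii)] $H^2$ gives a $[-1]$-shifted line, the obstruction direction.
\end{itemize}

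\textbf{Step 1: each chart.} Over each chart of $\P^1$ we write the algebraic Hodge stack as
\[
\Hodge_{\GG_a}(C) \cong B\GG_a \times \mathbb{V}(R^1\Gamma(C_{\Hod})) \times_{\AA^1} \mathbb{V}(R^2\Gamma(C_{\Hod}))[-1],
\]
and similarly for $\ol C$ and for $\Rep_{\GG_a}(C)^{\an}\times(\AA^1-0)$. The splitting is possible because a complex over $\AA^1$ whose cohomology sheaves are free (by the trivializations in the proof of Lemma \ref{lm cohomology of the curve}) is formal. We will also account for the extra $\Oo_{\AA^1}$ term coming from the base direction, as in Lemma \ref{lm description of T_Del}. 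Tracking the $\GG_m$-weights, this direction produces the additional $\OO(2)$ factor whose fibres are the $\lambda$-direction.

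\textbf{Step 2: gluing.} We check that the gluing maps $\mu,\ol\mu$ of \eqref{eq definition of mu and olmu} act linearly and componentwise. Since $\Triv$ and $\check\eta$ are induced by maps of the source shapes, pullback along them induces exactly the maps $\mu_1,\mu_2$ used to build $\C$. Consequently the quasi-algebraic linear stack we obtain is the total space of $\C[1]$ over $\P^1$. We then apply Lemma \ref{lm cohomology of the curve} to identify its graded pieces:
\begin{itemize}
\item[(i)] $H^0 \cong \Oo_{\P^1}$ gives the trivial $B\GG_a$ factor;
\item[(ii)] $H^1 \cong \Oo_{\P^1}(1)^{\oplus 2g}$ gives $\OO(1)^{\times_{\P^1}2g}$;
\item[(iii)] $H^2\cong\Oo_{\P^1}(2)$, via a $\P^1$-orientation, gives $\OO(2)[-1]$.
\end{itemize}
We will cross-check these identifications against Remark \ref{rm Dirac for ad} and Lemma \ref{lm description of T_Del}; the tangent complexes should match.

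\textbf{Main obstacle: splitting over $\P^1$.} The hardest point is showing that $\C$ splits as the sum of its cohomology sheaves in $\QA^!(\P^1)$. This splitting is what turns the fibre products into products rather than extensions, and it is not automatic. For example, $\mathrm{Ext}^2(\Oo_{\P^1}(2),\Oo_{\P^1})$ vanishes on $\P^1$, but the degree-one extensions $\mathrm{Ext}^1(H^{n+1}[-n-1],H^n[-n])$ must also be handled. Our first attempt will use the $\GG_m$-weight grading: the pieces $H^n$ carry distinct weights $n$, and all gluing maps are equivariant, so any extension class must have weight zero while mixing different weights, and hence vanishes. If the equivariance argument is not clean, the fallback is to compute the relevant groups $\mathrm{Ext}^2(\Oo(n+1),\Oo(n))=0$ directly on $\P^1$, using that $\P^1$ has cohomological dimension one. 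Having the splitting, the final step is to assemble the pieces into the displayed fibre product and to note that the $B\GG_a$ factor is constant, since $H^0$ is trivial.
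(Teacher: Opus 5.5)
\textbf{The gap: the factor $\OO(2)$.} Nothing in your argument produces the factor $\OO(2)$ in $\OO(2)\times_{\P^1}\OO(1)^{\times_{\P^1}2g}$.

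In Step 1 you first identify $\Hodge_{\GG_a}(C)$ completely as a linear stack over $\AA^1$. You then propose to ``also account for'' the base direction as an extra fibre coordinate. That is double counting. The coordinate $\lambda$ is the coordinate of the base over which every fibre product is taken. The fibre of $\Hodge_{\GG_a}(C)\to\AA^1$ over $\lambda$ is just $\Maps(C_{\Hod}\times_{\AA^1}\{\lambda\},B\GG_a)$, with no additional line.

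Your Step 2 in fact contradicts Step 1. It identifies the glued stack with the total space of $\C[1]$, whose cohomology is exactly (i)--(iii). So your final assembly gives $B\GG_a\times\OO(1)^{\times_{\P^1}2g}\times_{\P^1}\OO(2)[-1]$, which is the total space of $\sigma_0^*\TT_{\Del_{\GG_m}/\P^1}$ (compare Remark \ref{rm Dirac for ad}). ``Tracking the $\GG_m$-weights'' cannot manufacture a further summand.

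\textbf{Where the paper's factor comes from.} The paper argues differently, and there the extra factor has a definite source. It takes the trivial section $\sigma_0:\P^1\to\Del_{\GG_m}(C)$ and reads off \eqref{eq TT_Del on sigma0} from Lemma \ref{lm description of T_Del}. It then identifies $\Del_{\GG_a}(C)$ (``$\GG_a$ is the Lie algebra of $\GG_m$'') with the total space of the \emph{absolute} tangent complex along $\sigma_0$. Since $\sigma_0$ is a section of $\tau$, $\sigma_0^*\TT_{\Del_{\GG_m}}\cong\sigma_0^*\TT_{\Del_{\GG_m}/\P^1}\oplus\TT_{\P^1}$. The summand $\TT_{\P^1}\cong\Oo_{\P^1}(2)$ comes from the $\boxplus\,\Oo_{\AA^1}$ terms in the proof of Lemma \ref{lm description of T_Del}, and it is exactly the $\OO(2)$ you are missing.

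To reach the displayed formula, you would have to adopt, and justify, that identification with the absolute rather than the relative tangent complex. Your direct mapping-stack computation, carried out consistently, proves the version without the $\OO(2)$ factor. It is therefore in tension with the statement as written.

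\textbf{The splitting is not an obstacle.} The step you flag as the main difficulty is harmless. All $k$-invariants of $\C$ lie in $\mathrm{Ext}^{\geq 2}$ between vector bundles on $\P^1$. These groups vanish because the algebraic $\mathrm{Ext}^{\geq 1}$ on the affine charts vanishes, and so does the analytic $\mathrm{Ext}^{\geq 1}$ on the Stein overlap. So your fallback suffices, and the weight argument can be dropped.
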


\begin{proof}
Consider the trivial section
\[
\sigma_0 : \P^1 \to \Del_{\GG_m}(C)
\]
obtained by considering the trivial bundles $\Oo_{C_{\Hod}}$ and $\Oo_{\ol C_{\Hod}}$. Using Lemma \ref{lm description of T_Del}, one can easily describe the restriction of the tangent complex to the trivial section,
\begin{equation} \label{eq TT_Del on sigma0}
\sigma_0^* \TT_{\Del_{\GG_m}} \cong \Oo_{\P^1}[1] \oplus \left ( \Oo_{\P^1}(1)^{\oplus 2g} \oplus \Oo_{\P^1}(2) \right ) \oplus \Oo_{\P^1}(2)[-1]. 
\end{equation}
Since $\GG_a$ is the Lie algebra of $\GG_m$, it follows that the $\GG_a$-Deligne moduli stack amounts to the total space of the restriction to the trivial section of the tangent complex of $\Deligne_{\GG_m}(C)$. Then, the proof follows from \eqref{eq TT_Del on sigma0}.
\end{proof}

It is clear from Proposition \ref{pr Del_Ga} that $\Del_{\GG_a}(C)$ project to its (strictly) derived component $\OO(2)[-1]$. Nevertheless, as in the case of Lemma \ref{lm cohomology of the curve}, such projection is not canonical. This justifies the following. 

\begin{definition} \label{df Del orientation}
    A \textit{Deligne orientation} of the curve $C$ is a surjective morphism over $\P^1$,
    \begin{equation}
        [C]_{\Del}: \Del_{\GG_a}(C) \to  \OO(2)[-1].
    \end{equation}
\end{definition}

\begin{remark}
It follows naturally from the proof of Proposition \ref{pr Del_Ga} that a choice of Deligne orientation depends on the choice of a $\P^1$-orientation (hence on the choices of compatible Betti and Hodge orientations for $C$ and its conjugate).
\end{remark}

\section{Hamiltonian actions and boundary conditions}
\label{section TwB}

One of the guiding ideas of Ben-Zvi--Sakellaridis--Venkatesh \cite{BZSV}, going back to Gaiotto and Witten's work \cite{gaiotto&witten, Gaiotto} , is the emphasis on Hamiltonian $G$-spaces as the source of boundary conditions for the supersymmetric 4d Yang–Mills theory studied by Kapustin--Witten \cite{kapustin&witten}.  At a physical level of rigor, this means that there exists a Moore--Tachikawa category of boundary conditions which controls the Langlands QFT in the spirit of the Cobordism Hypothesis, {\it i.e.} that the Langlands QFT is encoded by a representation of the Moore--Tachikawa.

In this section, we explore the construction of a functor on the \textit{category of boundary conditions for the B-twist Langlands TQFT}\footnote{Here, by ``topological" we really mean ``conformal", i.e., the resulting structures continue to depend on the complex structure of our chosen algebraic curve. In the following, we shall use ``QFT" to mean that we have not picked a topological twist, and ``TQFT" to mean that we are considering either the A or B-twist. }. Our main result Theorem \ref{th representation of Cc} fits into the first steps of such a program, while Corollaries \ref{co relation with BZSV} and \ref{co relation with Gaiotto} exhibits the connection of our construction with the work of \cite{BZSV} and \cite{Gaiotto}, demonstrating that they interpolate previous results on this topic. 

\subsection{Quantization of the Moore--Tachikawa category}

\subsubsection{The Moore--Tachikawa category}

Mathematically, the Moore--Tachikawa category \cite{Moore--Tachikawa} may be conceived as a \textit{1-shifted Weinstein category} \cite{Calaque, Crooks-Mayrand, Weinstein1, Weinstein2}, whose objects are $1$-shifted symplectic stacks of the form $T^*[1]BG$ for reductive groups $G$, and 1-morphisms are furnished by 1-Lagrangian correspondences. Given two comparable 1-morphisms, i.e., a pair of 1-Lagrangian correspondences with the same source and target, their fiber product naturally carries the structure of a 0-shifted symplectic stack. The 2-morphisms of the Moore--Tachikawa category can then defined as 0-shifted Lagrangians in this fiber product, and one can continue on in this manner to build an $\infty$-category whose $k$-morphisms are furnished by $(2-k)$-Lagrangians. 

However, we will not consider these structures for $k>2$ and we explicate only the first three layers of this category. For that, the key mathematical definition we shall need is the following. 
\begin{definition}
A graded Hamiltonian $G$-action, is a symplectic $G$-variety $M$ equipped with a $G \times \Ggr$-equivariant moment map $\mu: M \to \mathfrak{g}^*$ with $\Ggr$ scaling $\mathfrak{g}^*$ with weight 2. 
\end{definition}

With the notion of graded Hamiltonian action established, we may introduce the main characters of the Moore--Tachikawa category:
\begin{itemize}
    \item (Codimension 0). The objects are reductive groups $G$, or, equivalently, their $1$-shifted symplectic stacks of the form $T^*[1]BG$. 
    \item (Codimension 1). The ($1$-)morphisms from one reductive group $H$ to another reductive group $G$ are graded Hamiltonian (commuting) actions $H \acts M \racts G$, with composition provided by Hamiltonian reduction. 
    \item (Codimension 2). Let $M_1, M_2$ be two Hamiltonain $G$-actions, viewed as $1$-morphisms in the Moore--Tachikawa category (from the trivial group to $G$). The ($2$-)morphisms from $M_1$ to $M_2$ are $G \times \Ggr$-equivariant Lagrangian correspondences $z = z_1 \times z_2:Z \to M_1^- \times M_2$, where $M_1^-$ is the $G$-Hamiltonian space with underlying space equal to $M_1$ and whose symplectic form is negated. 
\end{itemize}

Mathematically, one can realize the Moore--Tachikawa category as a 1-shifted Weinstein category: the objects are ($\Ggr$-equivariant) 1-shifted symplectic stacks of the form $T^*[1]BG \simeq \mathfrak{g}^*/G$ (with weight 2 $\Ggr$-scaling action). The equivalence between 1-shifted Lagrangian correspondences between such 1-shifted symplectic stacks and Hamiltonian actions as explained by \cite{Safronov} recovers the preceding concrete presentation of this category. In codimension 2, we start with an intersection of 1-shifted Lagrangians 
$$[M_1^-/G] \times_{T^*[1]BG} [M_2/G] \simeq M_1^- \times_{\mathfrak{g}^*}^G M_2$$
which is equipped with 0-shifted symplectic structure, and equivariant Lagrangian correspondences correspond to 0-shifted Lagrangians in $M_1^- \times_{\mathfrak{g}^*}^G M_2$.

\subsubsection{Langlands duality and S-duality of boundary conditions}

Up until now, we have remained relatively agnostic to whether we are considering the \textit{A-twist} or the \textit{B-twist} of the Kapustin--Witten QFT. In both cases, we are considering a representation of the Moore--Tachikawa category into dg-Categories,
\[
A,B : \text{(Moore--Tachikawa)} \to \mathrm{dgCat}.
\]
S-duality predicts an equivalence bewteen these two representations compatible with Langlands duality in the following sense. One may envision a duality structure $\check{(-)}$ on the Moore--Tachikawa category of boundary conditions, which at the level of objects acts by
$$G \longmapsto \check{G} \text{ the Langlands dual group}.$$
It is with respect to this duality operation $\check{(-)}$ that S-duality is expected to intertwine A-twisting and B-twisting, in the sense that (for any algebraic curve $C$), one expects an equivalence 
$$A(G) \leftrightarrow B(\check{G}) \text{ and } B(G) \leftrightarrow A(\check{G}),$$
of dg-categories.

The work of Ben-Zvi--Sakellaridis--Venkatesh \cite{BZSV} sheds some light on the action of the purported duality $\check{(-)}$ at the level of 1-morphisms. Given a \textit{hyperspherical} Hamiltonian action $M$ one can consider its BZSV hyperspherical dual $\check{M}$. In this case, one expects the equivalence
$$A(G \acts M) \leftrightarrow B(\check{G} \acts \check{M}) \text{ and } B(G \acts M) \leftrightarrow A(\check{G} \acts \check{M})$$
via BZSV hyperspherical duality. Unpacking, these equations are identifications of \textit{objects} in the equivalences of categories exhibited in the preceding equivalence. These objects have been termed (BZSV) \textit{period sheaves} and \textit{L-sheaves} in the A and B-twists respectively, in reference to the role they play in the arithmetic Langlands program.

\begin{remark}
At the level of 2-morphisms some examples have been discovered in recent work of the first named author \cite{CHY} as well, although a general framework has not yet been understood. 
\end{remark}

\subsection{Spectral quantization of the polarized Moore--Tachikawa category}
\label{sc spectral quantization}

We address our main result in this section: the construction of a (B-twist) representation of an enhanced version of the Moore--Tachikawa category. The version of the Moore--Tachikawa category we use has its members equipped with extra data, akin to the \textit{prequantization data} required for geometric quantization. In the shifted symplectic setting, these notions were first laid out by \cite{Safronov}.

\subsubsection{The polarized Moore--Tachikawa category}
\label{sc polarized MT}

We are interested in constructing a version of the Moore--Tachikawa category, which we call $\Cc$, whose objects coincide with those of the Moore--Tachikawa category ({\it i.e.} complex reductive groups) although its morphisms are suitably enhanced with \textit{prequantization data} in preparation for the Langlands B-twist quantization. Concretely, this means that the 1-morphisms and 2-morphisms of $\Cc$ are equipped with the structure of certain \textit{Lagrangian fibrations} \cite{Safronov}. To set things up, let us start with the fundamental definition supplying 1-morphisms in $\Cc$. Recall that if $X$ is a $G$-variety, the canonical sequence of cotangent complexes associated to the quotient map $\pi: X \to X/G$ reads
\begin{equation}
    \pi^*\mathbf{L}_{X/G} \to \mathbf{L}_X \overset{\mu}{\to} \mathfrak{g}^* \otimes \mathcal{O}_X \overset{+1}{\to}
\end{equation}
where $\mu$ may be regarded as a $\mathfrak{g}^*$-valued, $G$-equivariant moment map from $T^*X \to \mathfrak{g}^*$. Moment maps that arise this way will be referred to as \textit{polarized}, as they are always equipped with the equivariant Lagrangian fibration structure $T^*X \to X$. The following definition gives a mild but important generalization. 

\begin{definition}
Given a $G \times \Ggr$-action $X$ with a $G \times \Ggr$-equivariant map $\alpha: X \to B\GG_a$ (with trivial $G$-action and weight $2$ $\Ggr$-action on the target), we may form the \textit{$\alpha$-twisted cotangent bundle} $T^*_\alpha X$, equipped with a $\mathfrak{g}^*$-valued $G$-equivariant moment map. We say that a graded Hamiltonian $G$-action $M$ is \textit{twisted polarized} if it arises from a pair $(X,\alpha)$, 
\[
M \cong T^*_{\alpha}X.
\]
\end{definition}

The above definition would be crucial in defining $1$-morphisms on the category $\Cc$. However, we need further adjustments, following Corollary \ref{co conditions for [X/H]}, that ensure that Conditions \ref{cond C1}, \ref{cond C2} and \ref{cond C3} are preserved.

\begin{definition}\label{defn 1-morphism}
$1$-morphisms of the category $\Cc$ are twisted polarized graded Hamiltonian $G$-actions $T_\alpha^*X$ where $X := [Z/G']$ being $Z$ a quasiprojective derived scheme with affine diagonal acted by the complex reductive Lie groups $G$ and $G'$ with commuting actions. The composition of the $1$-morphisms $M_{12} \in \Mor_{\Cc}(G_1, G_2)$ and $M_{23} \in \Mor_{\Cc}(G_2, G_3)$ is provided by the $G_2$-Hamiltonian reduction, 
\[
M_{23} \circ M_{12} :=  M_{12}^- \times_{\mathfrak{g}_2^*}^{G_2} M_{23} \,,
\]
which is naturally equipped with a $\Ggr$-action since the moment map is weight 2.
\end{definition}

As a preliminary step, we check the validity of Definition \ref{defn 1-morphism} showing that our proposed class of $1$-morphisms is closed under composition. For that, we first study the preservation of the twisted polarized structure.

\begin{lemma} \label{lemma composition of twisted polarization}
Given two twisted polarized graded $G$-Hamiltonian actions $M_{12} = T^*_\alpha X$ and $M_{23} = T^*_\beta Y$, we form the fiber product $M_{13} := M_{12}^- \times_{\mathfrak{g}_2^*} M_{23}$. Then, the Hamiltonian action $G_1 \times G_3 \acts M_{13}$ can be presented as $T^*_{\alpha \times \beta}[(X \times Y)/G_2]$. In particular, Hamiltonian reductions of twisted polarized Hamiltonian actions are twisted polarized.
\end{lemma}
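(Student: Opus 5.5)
We prove the statement by describing both sides as shifted cotangent data over quotient stacks and comparing them through the Lagrangian-correspondence picture recalled above. Write $M_{12} = T^*_\alpha X$ with $G_1 \times G_2$ acting on $X$, and $M_{23} = T^*_\beta Y$ with $G_2 \times G_3$ acting on $Y$. Let $\alpha \times \beta$ denote the composite
\[
X \times Y \to B\GG_a \times B\GG_a \to B\GG_a
\]
of $\alpha \times \beta$ with addition, after replacing $\alpha$ by $-\alpha$ to account for the sign in $M_{12}^-$. This map is $G_1 \times G_2 \times G_3 \times \Ggr$-equivariant, with weight $2$ preserved because addition is linear. It is $G_2$-invariant, since $G_2$ acts trivially on the target, so it descends to
\[
[(X \times Y)/G_2] \to B\GG_a .
\]

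The first step is the untwisted case $\alpha = \beta = 0$. For the diagonal $G_2$-action on $X \times Y$, the conormal sequence for $\pi: X\times Y \to [(X\times Y)/G_2]$ identifies
\[
T^*[(X\times Y)/G_2] \simeq \bigl(T^*X \times T^*Y\bigr) \times^{G_2}_{\mathfrak g_2^*} \{0\},
\]
taken along the sum of the $G_2$-moment maps. Here $T^*X$ carries the negated form and the moment map $-\mu_X$ for $M_{12}^-$, so the condition $\mu_X + \mu_Y = 0$ becomes $-\mu_X = \mu_Y$. That is exactly the fibre product $M_{12}^- \times_{\mathfrak g_2^*} M_{23}$ followed by the $G_2$-quotient.

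In shifted-symplectic language we present this as follows. $[T^*X/G_2]$ and $[T^*Y/G_2]$ are the $1$-Lagrangians $T^*[1]$ of the maps $[X/G_2]\to BG_2$ and $[Y/G_2] \to BG_2$ into $T^*[1]BG_2$. Their intersection is the shifted cotangent bundle of $[X/G_2]\times_{BG_2}[Y/G_2] = [(X\times Y)/G_2]$. This is the standard fact that Lagrangian intersections of conormal-type Lagrangians are conormals of fibre products (Safronov). The residual $G_1 \times G_3$-moment maps are inherited factorwise, and the $\Ggr$-weights match because all maps involved are graded.

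The second step adds the twist. $T^*_\alpha X$ is the torsor for $T^*X$ defined by the fibre of $d\alpha$, that is, the affine bundle of splittings twisted by the class $\alpha^*$ of the tautological $1$-form on $B\GG_a$. The twisted cotangent construction is additive, $T^*_{\alpha+\beta} \simeq T^*_\alpha \times_{T^*} T^*_\beta$ as torsors, and it is compatible with pullback along smooth maps. Applying these properties to $X\times Y \to [(X\times Y)/G_2]$, the untwisted argument goes through verbatim after replacing each cotangent bundle by the corresponding torsor. The moment maps are unchanged, since the twist is $G_2$-invariant.

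The main obstacle is to carry out this torsor comparison at the derived level: one must verify that the identification of Lagrangian intersections respects the twisting classes. I would handle this by viewing $\alpha$ and $\beta$ as shifted exact structures, i.e. Lagrangians in $T^*[1]B\GG_a$, and composing correspondences, so that additivity follows from the group structure on $B\GG_a$.

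Finally, note that $[(X\times Y)/G_2] = [(Z\times Z'')/(G'\times G''\times G_2)]$. This is again of the form required by Definition \ref{defn 1-morphism}, by Corollary \ref{co conditions for [X/H]}, which gives the closure statement.
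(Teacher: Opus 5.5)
Your untwisted step is correct. It is the same ``general fact'' the paper reduces to, with $W=X\times Y$ and $H=G_2$. The gap is in the twisted step, at ``the moment maps are unchanged, since the twist is $G_2$-invariant.'' That claim is false, and it is exactly what the lemma turns on.

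\emph{Why the moment map changes.} A $G_2$-equivariant map $X\to B\GG_a$ with trivial action on the target is a $\GG_a$-torsor on $X$ \emph{together with} a $G_2$-equivariant structure. Descending to $[X/G_2]$ uses that structure; it does not make it trivial. Infinitesimally, the structure gives a function $a\colon X\to\mathfrak{g}_2^*$ (locally, the $\mathfrak{g}_2^*$-component of $d\alpha$). On a chart with $T^*_\alpha X|_U\simeq T^*U$, the moment map of $T^*_\alpha X$ is $\mu-a$ (up to sign convention), not $\mu$. In fact $\mu$ alone does not descend to the torsor. Changing the trivialization by $\xi\mapsto\xi+df$ shifts $\mu(\xi)(v)$ by $v(f)$, and $a$ shifts by the same amount, so only $\mu-a$ is well defined.

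\emph{A concrete check.} Take the Whittaker twist: $X=U_\rho\backslash G$, $\alpha$ induced by $\psi_\rho$, and $G_2=G$. Then $T^*_\alpha X=\{\xi\in\mathfrak{g}^*:\xi|_{\mathfrak{u}_\rho}=d\psi_\rho\}\times^{U_\rho}G$. Its moment map never vanishes if $\psi_\rho\neq 0$, so the reduction at $0$ is empty. This agrees with $T^*_{\psi_\rho}BU_\rho=\bigl(\{d\psi_\rho\}\times_{\mathfrak{u}_\rho^*}\{0\}\bigr)/U_\rho=\emptyset$. An unshifted moment map, valued in $G\cdot\mathfrak{u}_\rho^\perp$, would instead return the nonempty $T^*BU_\rho$.

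So ``the untwisted argument goes through verbatim'' is not justified. For $\pi\colon W\to[W/H]$, the twisted conormal sequence gives an \emph{affine} map $T^*_{\pi^*\alpha}W\to\mathfrak{h}^*$ with linear part $\mu$ and constant part $-a$. What must be shown is that $T^*_\alpha[W/H]$ is the $H$-quotient of its zero fibre, i.e.\ locally the reduction of $T^*U$ at level $a$. That computation, including the check that $\mu-a$ descends, is the whole content of the paper's proof, and your proposal does not supply it.

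The route in your last paragraph can be made to work, but only after dropping that claim, because there the shift is automatic. For $\alpha\colon S\to B\GG_a$, set $T^*_\alpha S:=\Gamma_{d\alpha}\times_{T^*[1]S}0_S$. Take the Hamiltonian $H$-structure on $T^*_\alpha W$ to be the transport $L_\alpha$ of $\Gamma_{d\alpha}\subset T^*[1][W/H]$ along $T^*[1]BH\leftarrow\pi^*T^*[1]BH\to T^*[1][W/H]$. Since the $\mathfrak{h}^*$-component of $d\alpha$ is (locally) $a$, $L_\alpha$ is not the untwisted conormal. Base change then gives $L_\alpha\times_{T^*[1]BH}0_{BH}\simeq\Gamma_{d\alpha}\times_{T^*[1][W/H]}0_{[W/H]}=T^*_\alpha[W/H]$.

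Applying this with $W=X\times Y$, $H=G_2$, and the twist $(-\alpha)\boxplus\beta$ you already set up gives the lemma. No Baer-sum additivity of torsors is needed, since $T^*_{(-\alpha)\boxplus\beta}(X\times Y)$ is simply $T^*_{-\alpha}X\times T^*_\beta Y$. Compared with the paper's chart-by-chart computation of $\mu-a$, this packaging avoids local trivializations and is natively derived. As written, however, your central step asserts the wrong moment map and does not establish the identification.
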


\begin{proof}
This is a twisted version of the general fact that for any polarized Hamiltonian action $H \acts T^*W$, we have $W \times_{\mathfrak{h}^*}^H \{0\} \simeq T^*[W/H]$ (by taking $\alpha = \beta = 0, W = X \times Y$, and $H = G_2$). For the twisted statement, we would like to establish that, if $\alpha: W/G \to B\GG_a$ is a twist, then the Hamiltonian reduction of $T^*_\alpha W$ by $H$ is given by $T^*_\alpha[W/H]$, where in the latter we regard $\alpha: [W/H] \to B\GG_a$ as a twist for $T^*[W/H]$.

Taking the differential of the twisting data $\alpha$ we obtain a morphism of complexes
\begin{equation}
    d\alpha_z: \big[\mathfrak{h} \to T_x W\big][1] \longrightarrow \mathfrak{g}_a[1]
\end{equation}
for every point $x \in W$, and taking the degree $(-1)$ component we obtain $a := d\alpha_{(-1)}$, a $G$-equivariant $\mathfrak{h}^*$-valued function on $W$. The Hamiltonian moment map $\mu_\alpha$ for the twisted cotangent $T^*_\alpha W$ can be described in terms of $a$ as follows. Finding an $H$-equivariant trivialization $p: U \to W$ on which $p^*T^*_\alpha \simeq T^*U$, the fundamental sequence of cotangent complexes along the quotient $\pi: U \to U/H$ reads
\begin{equation} \label{equation moment map of polarized action}
    \pi^*\mathbf{L}_{U/H} \longrightarrow \mathbf{L}_U \overset{\mu}{\longrightarrow} \mathfrak{h}^* \otimes \mathcal{O}_U \overset{+1}{\to}
\end{equation}
where $\mu$ is the untwisted moment map $T^*U \to \mathfrak{h}^*$. Twisting $\mu$ by $p^*a$, that is, considering the formula
\begin{equation} \label{equation twisted moment map on a fiber}
    T^*_xU \ni \xi \longmapsto \mu(\xi) - a(x),
\end{equation}
we descend to a morphism $\mu_\alpha: T^*_\alpha W \to \mathfrak{h}^*$ which is the Hamiltonian moment map of $T^*_\alpha W$. Indeed, any change of local trivialization changes $\xi$ by an exact 1-form $\xi \mapsto \xi + df$, so the first summand of formula \eqref{equation twisted moment map on a fiber} gives, for a Lie algebra element $v \in \mathfrak{h}$ inducing a vector field that we continue to denote by $v$, 
\begin{equation}
    \mu(\xi+df)(v) = \langle \xi, v_x\rangle + v_x(f). 
\end{equation}
On the other hand, the function $a$ changes under the change of trivialization by
\begin{equation}
    a \mapsto a' = a+df, \text{ so } a'(x)(v) = a(x)(v) + \langle df, v\rangle_x = a(x)(v) + v_x(f),
\end{equation}
and the expression $\mu+a$ of \eqref{equation twisted moment map on a fiber} indeed descends.

To summarize, locally on $W$ we may model the twisted moment map $\mu_\alpha: T^*_\alpha W \to \mathfrak{h}^*$ by $\mu+a$. Therefore, by \eqref{equation moment map of polarized action} we may locally model $\mu_\alpha$-Hamiltonian reduction as $(\mu-a)$-Hamiltonian reduction, the latter gives precisely $T^*_\alpha [W/H]$ by definition.
\end{proof}

\begin{proposition}
The class of $1$-morphisms described in Definition \ref{defn 1-morphism} is closed under the proposed composition.
\end{proposition}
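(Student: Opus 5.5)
The plan is to split closure under composition into two independent checks, matching the two clauses of Definition \ref{defn 1-morphism}. Let $M_{12} = T^*_\alpha X \in \Mor_{\Cc}(G_1,G_2)$ and $M_{23} = T^*_\beta Y \in \Mor_{\Cc}(G_2,G_3)$. Here $X = [Z_X/G'_X]$ and $Y = [Z_Y/G'_Y]$, with $Z_X, Z_Y$ quasiprojective derived schemes with affine diagonal. The group $G'_X$ acts on $Z_X$ commuting with $G_1\times G_2$, and similarly $G'_Y$ acts on $Z_Y$ commuting with $G_2 \times G_3$.

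\emph{Twisted polarized structure.} The first check is that the composite is again a twisted polarized graded Hamiltonian $G_1\times G_3$-action. This is exactly Lemma \ref{lemma composition of twisted polarization}, which gives
\[
M_{23}\circ M_{12} \cong T^*_{\alpha\times\beta}\big[(X\times Y)/G_2\big].
\]
Two points still need verifying.
\begin{itemize}
\item The twist $\alpha\times\beta$ must descend to a $G_1\times G_3\times\Ggr$-equivariant map $[(X\times Y)/G_2] \to B\GG_a$. I would take the sum of the two twists under the addition map $B\GG_a\times B\GG_a\to B\GG_a$, which is $\Ggr$-equivariant of weight $2$. It is $G_2$-invariant because each of $\alpha$ and $\beta$ is, with trivial action on the target.
\item The $\Ggr$-action on the reduction must be the one induced by the weight-$2$ moment map. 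This holds because the fibre product over $\mathfrak{g}_2^*$ is $\Ggr$-equivariant.
\end{itemize}

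\emph{Shape of the underlying space.} The second check is that the new base $W := [(X\times Y)/G_2]$ has the required form $[Z/G']$. I would set
\[
Z := Z_X\times Z_Y, \qquad G' := G'_X\times G'_Y\times G_2,
\]
so that
\[
W \cong \big[(Z_X\times Z_Y)/(G'_X\times G'_Y\times G_2)\big],
\]
using the identification of successive quotients by commuting actions noted before Corollary \ref{co conditions for [X/H]}. Three properties then need checking.
\begin{itemize}
\item $Z$ is a quasiprojective derived scheme with affine diagonal, since quasiprojectivity and affineness of the diagonal are stable under finite products over $\CC$.
\item $G'$ is complex reductive, being a product of reductive groups, and $G_1\times G_3$ is reductive for the same reason.
\item The $G'$-action commutes with the $G_1\times G_3$-action on $Z$. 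This follows from the hypotheses: $G'_X$ commutes with $G_1,G_2$; $G'_Y$ commutes with $G_2,G_3$; $G_1$ and $G_3$ act on different factors; and $G_2$ commutes with both $G_1$ and $G_3$ on each factor.
\end{itemize}
By Corollary \ref{co conditions for [X/H]}, $[W/(G_1\times G_3)]$ then satisfies \ref{cond C1}, \ref{cond C2} and \ref{cond C3}, as the definition intends.

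\emph{Main obstacle.} I expect the delicate step to be the first check, specifically making sure the identification from Lemma \ref{lemma composition of twisted polarization} is $G_1\times G_3\times\Ggr$-equivariant and compatible with the sign convention $M_{12}^-$. The minus sign changes the moment map by a sign, so one should use $T^*_{-\alpha}X$, or equivalently flip the twist. I would handle this by working locally on an $H$-equivariant trivialization as in the proof of that lemma and tracking the twisted moment map $\mu - a$ there. Everything else is formal bookkeeping of products of groups and schemes.
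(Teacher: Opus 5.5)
Your proposal is correct and takes essentially the same route as the paper. The paper invokes Lemma \ref{lemma composition of twisted polarization} to get the twisted polarized structure, identifies $[(X\times Y)/G_2]$ with $[(Z_X\times Z_Y)/(G'_X\times G'_Y\times G_2)]$, and notes that $Z_X\times Z_Y$ is again quasiprojective with affine diagonal. Your additional checks (that the twist descends, that the actions commute with $G_1\times G_3$, and the sign coming from $M_{12}^-$) are details the paper leaves implicit.
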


\begin{proof}
Making use of Lemma \ref{lemma composition of twisted polarization} and the notation therein, the remaining of the proof is immediate after the observation, proved below, that
\begin{equation} \label{eq comp of 1-mor with cond}
[(X \times Z)/G_2] \cong [Z/G],
\end{equation}
being $Z$ a quasiprojective derived scheme with affine diagonal and $G$ a complex reductive Lie group.

Suppose $X = [Z'/G']$ and $Y = [Z''/G'']$, with $Z'$ and $Z''$ quasiprojective derived schemes with affine diagonal acted by the complex reductive Lie groups $G'$ and $G''$ with actions commuting with $G_2$. Then, their product amounts to
\[
X \times Y = [Z'/G'] \times [Z''/G''] \cong [Z' \times Z''/G' \times G''],
\]
where $G'$ and $G''$ act trivially on the second and first factors, respectively, and $Z = Z' \times Z''$ is a quasi-projective derived scheme with affine diagonal as so are $Z'$ and $Z''$. 

As the action of $G' \times G''$ on $Z$ commutes with that of $G_2$ by hypothesis, \eqref{eq comp of 1-mor with cond} follows after setting $G = G' \times G'' \times G_2$ and the proof is completed.
\end{proof}

Next, we consider 2-morphisms; one could have phrased it in the generality of Lagrangian fibrations in the sense of \cite{Safronov}, but we will only apply these concepts to our restricted class of twisted polarized 1-morphisms. 
\begin{definition} \label{defn 2-morphism}
    Let $T^*_\alpha X, T^*_\beta Y$ be two 1-morphisms with the same source and target (say, with graded Hamiltonian action by $G$). A \textit{2-morphism} from $T^*_\alpha X$ to $T^*_\beta Y$ is a diagram
    \begin{equation}
        \begin{tikzcd}
	{T^*_\alpha X} & Z & {T^*_\beta Y} \\
	X & A & Y
	\arrow[from=1-1, to=2-1]
	\arrow["{q_1}"', from=1-2, to=1-1]
	\arrow["{q_2}", from=1-2, to=1-3]
	\arrow[from=1-2, to=2-2]
	\arrow[from=1-3, to=2-3]
	\arrow["{p_1}"', from=2-2, to=2-1]
	\arrow["{p_2}", from=2-2, to=2-3]
\end{tikzcd}
    \end{equation}
    where
    \begin{itemize}
        \item $q_1 \times q_2: Z \to T^*_\alpha X \times T^*_\beta Y$ is an equivariant Lagrangian correspondence (equivalently, $Z$ has a $G$-action and $Z/G \to T^*_\alpha X \times_{\g^*}^G T^*_\beta Y$ is a 0-Lagrangian),
        \item $Z \to A$ is a polarization, and
        \item $p_1$ is \'etale, 
    \end{itemize}
    equipped with an isomorphism $p_1^!\alpha \simeq p_2^!\beta$ of $\GG_a$-bundles on $A$.

    Consider the (composable) $2$-morphisms 
$$
\begin{tikzcd}
	{M_1 = T^*_{\alpha_1}X_1} & {Z_{12}} & {M_2 = T^*_{\alpha_2}X_2} & {M_2 = T^*_{\alpha_2}X_2} & {Z_{23}} & {M_3 = T^*_{\alpha_3}X_3} \\
	{X_1} & {A_{12}} & {X_2} & {X_2} & {A_{23}} & {X_3}
	\arrow[from=1-1, to=2-1]
	\arrow[from=1-2, to=1-1]
	\arrow[from=1-2, to=1-3]
	\arrow["{\pi_{12}}"', from=1-2, to=2-2]
	\arrow[from=1-3, to=2-3]
	\arrow[from=1-4, to=2-4]
	\arrow[from=1-5, to=1-4]
	\arrow[from=1-5, to=1-6]
	\arrow["{\pi_{23}}"', from=1-5, to=2-5]
	\arrow[from=1-6, to=2-6]
	\arrow["{p_1}"', from=2-2, to=2-1]
	\arrow["{p_2}", from=2-2, to=2-3]
	\arrow["{q_1}"', from=2-5, to=2-4]
	\arrow["{q_2}", from=2-5, to=2-6]
\end{tikzcd}
$$
where $p_1, q_1$ are \'etale, and $\pi_{12}, \pi_{23}$ are polarizations compatible with the twisted polarizations of $M_1, M_2, M_3$. We define their composition via the fibre products
\begin{equation} \label{eq fibre prods comp 2-mor}
Z_{13} := Z_{12} \times_{M_2} Z_{23} \to A_{13} := A_{12} \times_{X_2} A_{13}
\end{equation}
and the associated commuting diagram
\begin{equation} \label{eq commuting diagram for composition}
\begin{tikzcd}
	&&{Z_{13}} &  & &&& \\
    &&&&& {A_{13}} && \\
    & {Z_{12}} && {Z_{23}} &&& \\
	&&&& {A_{12}} && {A_{23}} \\
    {T^*_{\alpha_1}X_1} && {T^*_{\alpha_2}X_2} && {T^*_{\alpha_3}X_3} &&& \\
	&&&{X_1} && {X_2} && {X_3}
	\arrow["{r_1}"', from=2-6, to=4-5]
	\arrow["{r_2}", from=2-6, to=4-7]
	\arrow["{p_1}"', from=4-5, to=6-4]
	\arrow["{p_2}", from=4-5, to=6-6]
	\arrow["{q_1}"', from=4-7, to=6-6]
	\arrow["{q_2}", from=4-7, to=6-8]
    \arrow[from=1-3, to=3-2]
	\arrow[from=1-3, to=3-4]
	\arrow[from=3-2, to=5-1]
	\arrow[from=3-2, to=5-3]
	\arrow[from=3-4, to=5-3]
	\arrow[from=3-4, to=5-5]    
    \arrow[from=1-3, to=2-6]
	\arrow[from=3-2, to=4-5]
	\arrow[from=3-4, to=4-7]
	\arrow[from=5-1, to=6-4]
	\arrow[from=5-3, to=6-6]
	\arrow[from=5-5, to=6-8]
\end{tikzcd},
\end{equation}
and the associated isomorphism between the polarization data $(X_1, \alpha_1)$ and $(X_3, \alpha_3)$ obtained by 
$$r_1^!p_1^!\alpha_1 \simeq r_1^!p_2^!\alpha_2 \simeq r_2^!q_1^!\alpha_2 \simeq r_2^! q_2^! \alpha_3.$$
\end{definition}

\begin{proposition}
The class of $2$-morphisms described in Definition \ref{defn 2-morphism} is closed under the proposed composition.
\end{proposition}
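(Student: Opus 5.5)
We have to check that the composite diagram \eqref{eq commuting diagram for composition} is again a $2$-morphism from $T^*_{\alpha_1}X_1$ to $T^*_{\alpha_3}X_3$ in the sense of Definition \ref{defn 2-morphism}. That means four things: (i) $Z_{13}\to T^*_{\alpha_1}X_1\times T^*_{\alpha_3}X_3$ is a $G$-equivariant Lagrangian correspondence; (ii) $Z_{13}\to A_{13}$ is a polarization; (iii) the left leg $p_1\circ r_1 : A_{13}\to X_1$ is étale; (iv) there is an identification $r_1^!p_1^!\alpha_1\simeq r_2^!q_2^!\alpha_3$ of $\GG_a$-bundles on $A_{13}$. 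Here $A_{13}$ is read as $A_{12}\times_{X_2}A_{23}$, correcting the evident typo in \eqref{eq fibre prods comp 2-mor}. I would treat the four points in increasing order of difficulty.

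\textbf{Points (iv) and (iii).} Point (iv) is essentially given: the displayed chain of equivalences uses the two $2$-morphism isomorphisms and the commutativity $p_2 r_1\simeq q_1 r_2$ of the fibre-product square. I would only remark that this isomorphism is canonical up to the homotopy witnessing that square. For (iii), the map $r_1: A_{13}\to A_{12}$ is the base change of $q_1: A_{23}\to X_2$ along $p_2$. Since $q_1$ is étale by hypothesis, $r_1$ is étale, and therefore so is the composite $p_1 r_1$.

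\textbf{Point (i).} This is the standard fact that composition of Lagrangian correspondences is Lagrangian, in the shifted-symplectic sense. I would set it up in the $1$-shifted Weinstein picture. Correspondences $Z/G \to T^*_{\alpha_1}X_1\times_{\g^*}^G T^*_{\alpha_2}X_2$ are $0$-Lagrangians, equivalently Lagrangian correspondences between $1$-shifted Lagrangians in $T^*[1]BG$. I would then invoke the composition theorem for Lagrangian correspondences (Calaque; Haugseng), which says the derived fibre product $Z_{12}\times_{M_2}Z_{23}$ carries an induced Lagrangian structure. Equivariance is automatic because every map involved is $G\times\Ggr$-equivariant.

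\textbf{Point (ii), the main obstacle.} We must show $Z_{13}\to A_{13}$ is a polarization (a Lagrangian fibration in Safronov's sense) compatible with the twisted polarizations of $M_1$ and $M_3$. I would argue étale-locally. The map $p_1$ is étale, so étale-locally on $A_{12}$ we may identify $A_{12}$ with $X_1$. Using the twist isomorphism $p_1^!\alpha_1\simeq p_2^!\alpha_2$, the correspondence $Z_{12}$ then becomes the graph-type correspondence $T^*_{\alpha}A_{12}$, glued along the conormal directions.

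Concretely, I would aim to show $Z_{12}\simeq T^*_{p_1^!\alpha_1}A_{12}$ as a space over $A_{12}$, using the hypothesis that $Z_{12}\to A_{12}$ is a polarization compatible with those of $M_1$ and $M_2$, and similarly for $Z_{23}$. Granting this, $Z_{13}$ is identified over $A_{13}$ with the twisted cotangent bundle of $A_{13}$. This identification is a relative version of the reduction argument in Lemma \ref{lemma composition of twisted polarization}: pulling back along the étale maps $r_i$ preserves twisted cotangent bundles, because $\mathbf{L}_{r_i}=0$. The twists then match by (iv).

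The delicate points are twofold. First, the fibre product over $M_2=T^*_{\alpha_2}X_2$ must be compared with the fibre product over $X_2$ at the base. Second, the Lagrangian structure from (i) must be checked to agree with the canonical one on the twisted cotangent bundle. I expect this comparison to require a local computation with the moment maps $\mu-a$, as in the proof of Lemma \ref{lemma composition of twisted polarization}.
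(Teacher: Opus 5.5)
Your decomposition into (i)--(iv) matches what the paper has to check, and your handling of (iii), (iv) and (i) agrees with it. For (iii), $r_1$ is \'etale as the base change of $q_1$ along $p_2$, hence so is $p_1\circ r_1$. For (iv), the twist identification is the displayed chain. For (i), the Lagrangian property comes from composition of Lagrangian correspondences.

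The paper obtains (i) and (ii) together by citing Safronov: the fibre products $Z_{12}\times_{M_2}Z_{23}\to A_{12}\times_{X_2}A_{23}$ form a polarized Lagrangian correspondence. Your replacement for that citation in (ii) is where the argument fails.

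The identification $Z_{12}\simeq T^*_{p_1^!\alpha_1}A_{12}$ over $A_{12}$ that you aim for is false in general. Since $p_1$ is \'etale, $T^*A_{12}$ has dimension $2\dim X_1$, whereas the Lagrangian condition forces $\dim Z_{12}=\dim X_1+\dim X_2$. In the basic example of Section \ref{sc 2-mor} ($A=X_1$, $p_1=\mathrm{id}$, $p_2=\varphi$), the correspondence is the conormal one, $Z\simeq X_1\times_{X_2}T^*X_2$, not $T^*X_1$. In general, \'etaleness of $p_1$ means the $X_1$-covector is determined by the $X_2$-covector, so the natural local model is $A_{12}\times_{X_2}T^*_{\alpha_2}X_2$, not the twisted cotangent bundle of $A_{12}$.

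Your model is also incompatible with the leg $Z_{12}\to M_2$. There is no natural map $T^*A_{12}\to T^*X_2$ covering $p_2$ unless $p_2$ is \'etale. So your conclusion that $Z_{13}$ is the twisted cotangent bundle of $A_{13}$ does not follow. It is in fact false for the same dimension reason: $\dim A_{13}=\dim X_1$, while $\dim Z_{13}=\dim X_1+\dim X_3$.

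Even with the corrected local model, you would need an additional normal-form result: that every polarized correspondence as in Definition \ref{defn 2-morphism} is \'etale-locally of conormal type. Nothing in your sketch supplies this. The missing ingredient is the composition statement for polarizations itself. You can either quote it from Safronov, as the paper does, or verify it directly. For the direct route, the relative tangent complex of $Z_{13}\to A_{13}$ is the fibre product $\mathbb{T}_{Z_{12}/A_{12}}\times_{\mathbb{T}_{M_2/X_2}}\mathbb{T}_{Z_{23}/A_{23}}$, pulled back to $Z_{13}$. You would then check that it satisfies the fibre-sequence condition defining a Lagrangian fibration, using three inputs: the polarizations of $Z_{12}$ and $Z_{23}$, their compatibility with $M_2\to X_2$, and the Lagrangian structure from (i). This is exactly the ``comparison over $M_2$ versus over $X_2$'' that you flag as delicate and leave open, so as written point (ii) is not established.
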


\begin{proof}
This is immediate from the observation that \eqref{eq fibre prods comp 2-mor} forms a polarized Lagrangian correspondence \cite{Safronov} between $M_1$ and $M_3$ and the fact that the composition $p_1 \circ r_1$ is \'etale as so are $q_1$, by initial assumption, and $r_1$, by base change.
\end{proof}

Based on the above, we define the following polarized enhancement of the Moore--Tachikawa category.

\begin{definition}
Define the {\it polarized Moore--Tachikawa category} $\Cc$ as the $2$-category populated by the following data:
\begin{itemize}
    \item Objects in $\Cc$ are 1-shifted symplectic stacks of the form $T^*[1]BG$ for $G$ a reductive group.
    \item $1$-morphisms in $\Cc$ and their compositions are given as in Definition \ref{defn 1-morphism}.
    \item $2$-morphisms in $\Cc$ and their compositions are given as in Definition \ref{defn 2-morphism}.
\end{itemize}
\end{definition}

We are now ready to state our main result.

\begin{theorem} \label{th representation of Cc}
Given a smooth projective curve $C$, there exists a representation 
\[
\bB : \Cc \to \mathrm{dgCat},
\]
sending $G$ to the category of quasi-algebraic sheaves over the corresponding Deligne moduli stack,
\[
\bB(G) := \QA^!(\Del_G(C)).
\]
\end{theorem}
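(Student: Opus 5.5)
The representation is built level by level, using the quasi-algebraic six-functor toolkit of Section \ref{sc qalg} (Lemma \ref{lm qalg push and pull}, base change and projection formula) together with the relative Deligne stacks $\Del_G^X(C)$ and the pivotal morphism $\theta^X$ of Remark \ref{rm pivotal morphism}.

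\textbf{Objects.} On objects the assignment is forced: $\bB(G) := \QA^!(\Del_G(C))$.

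\textbf{1-morphisms.} Let $M = T^*_\alpha X \in \Mor_\Cc(H,G)$, with $X = [Z/G']$ carrying commuting $H\times G$-actions. By Corollary \ref{co conditions for [X/H]}, the relative stack $\Del^X_{H\times G}(C)$ is well defined, and post-composition gives $\theta^X : \Del^X_{H\times G}(C) \to \Del_{H}(C)\times_{\P^1}\Del_G(C)$. The twist $\alpha: X \to B\GG_a$ is $\Ggr$-equivariant of weight $2$. By functoriality of the mapping stacks, it induces $\Del(\alpha): \Del^X_{H\times G}(C) \to \Del_{\GG_a}(C)$ over $\P^1$. Composing with a fixed Deligne orientation $[C]_{\Del}$ (Definition \ref{df Del orientation}) gives a map to $\OO(2)[-1]$, along which $\mathbf{exp}$ (Section \ref{sc qa exponential sheaf}) can be pulled back. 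The kernel is then
\[
\Kk_M := \theta^X_*\bigl( ([C]_{\Del}\circ \Del(\alpha))^!\,\mathbf{exp}\bigr),
\]
and $\bB(M)$ is the integral functor $\Ee \mapsto \mathrm{pr}_{G,*}(\mathrm{pr}_H^!\Ee \otimes^! \Kk_M)$. Here the kernel is read in the sheared category and unsheared on the base, where $\GG_m$ acts trivially. For $\alpha = 0$ the exponential factor is just $\omega$.

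Compatibility with composition is where I expect the main obstacle. By Lemma \ref{lemma composition of twisted polarization}, $M_{23}\circ M_{12} = T^*_{\alpha\times\beta}[(X\times Y)/G_2]$. The plan is to prove that mapping stacks into $[(X\times Y)/G_2 / (G_1\times G_3)]$ are the fibre product
\[
\Del^X_{G_1\times G_2}(C)\times_{\Del_{G_2}(C)}\Del^Y_{G_2\times G_3}(C).
\]
This should follow because $\Maps(C_{\Sim},-)$ preserves fibre products, analytification preserves finite limits, and the gluing maps $\mu,\ol\mu$ are natural in the target. One then needs the multiplicativity $\mathbf{exp}_{\alpha\times\beta} \cong \mathbf{exp}_\alpha \otimes^! \mathbf{exp}_\beta$, obtained by pulling back along addition $B\GG_a\times B\GG_a\to B\GG_a$. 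This reduces to the character property of $\exp_{\AA^1}$ from \cite[Appendix A]{BZSV}, transported fibrewise via Lemma \ref{lm pointwise restriction of Koszul}. With these two facts, quasi-algebraic base change and the projection formula identify $\bB(M_{23})\circ\bB(M_{12})$ with the integral functor of the composite kernel. Coherence of the associativity isomorphisms comes from the canonicity of base-change isomorphisms.

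\textbf{2-morphisms.} Given a 2-morphism $(Z\to A, p_1, p_2, p_1^!\alpha\simeq p_2^!\beta)$, form $\Del^A$. Because the isomorphism of twists is given, pulling back along $\Del(p_1)$ and $\Del(p_2)$ identifies the two exponential sheaves on $\Del^A$. The natural transformation $\Kk_{T^*_\alpha X}\to\Kk_{T^*_\beta Y}$ is then the composite
\[
\theta^X_*\mathbf{exp}_\alpha \to \theta^X_*\Del(p_1)_*\Del(p_1)^!\mathbf{exp}_\alpha \simeq \theta^Y_*\Del(p_2)_*\Del(p_2)^!\mathbf{exp}_\beta \to \theta^Y_*\mathbf{exp}_\beta.
\]
The first arrow is the unit of $\Del(p_1)_* \dashv \Del(p_1)^!$. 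It exists because $p_1$ is étale, hence so is $\Del(p_1)$, and for étale maps $!$-pullback agrees with $*$-pullback. The last arrow is the counit of \eqref{eq qalg adjunction}.

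Compatibility with vertical composition follows by base change applied to the cube \eqref{eq commuting diagram for composition}, using again that mapping stacks preserve the fibre product $A_{13}$. This last step is routine compared with the kernel-composition step.
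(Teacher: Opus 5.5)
Your proposal follows the paper's proof essentially step by step: kernels $(\theta^X_*\boldsymbol{\alpha}^!\mathbf{exp})^{\unshear}$ used as integral kernels, compatibility with composition via the identification of $\Del^{X\times Y}_{G_1\times G_2\times G_3}(C)$ with the relative Deligne stack of $[(X\times Y)/G_2]$ together with base change, and 2-morphisms via an étale unit followed by the counit of $b_*\dashv b^!$. You are in fact more explicit than the paper, which leaves implicit the multiplicativity $\mathrm{add}^!\mathbf{exp}\cong\mathbf{exp}\boxtimes_{\P^1}\mathbf{exp}$ that its composition step silently needs. One naming slip: the first arrow for 2-morphisms is the unit of $\Del(p_1)^*\dashv\Del(p_1)_*$ transported along $\Del(p_1)^*\simeq\Del(p_1)^!$, not the unit of $\Del(p_1)_*\dashv\Del(p_1)^!$ (whose unit points the other way).
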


\begin{proof} 
The representation of $1$-morphisms and $2$-morphisms are covered in Sections \ref{sc 1-mor} and \ref{sc 2-mor}, respectively.
\end{proof}

\subsubsection{1-morphisms}
\label{sc 1-mor}

We construct, in this section, quasi-algebraic sheaves starting from twisted polarized Hamiltonian actions that are meant to be kernels for suitable functors between the corresponding categories. We abuse of notation and denote the functors and their kernels indistinctively.  

For shake of clarity, we first work out the case with trivial polarization ({\it i.e.} $\alpha = 0$). Suppose we have a graded polarized Hamiltonian $G$-action $M = T^*X$ with $X$ as in the statement of Corollary \ref{co conditions for [X/H]}, hence the existence of the associated relative Deligne stack and the morphism \eqref{eq pivotal morphism} is ensured,
\[
\theta^X : \Del_G^X(C) \to \Del_G(C).
\]
We define the quasi-algebraic sheaf attached to $M$ as follows. 
\begin{definition}
    Consider a quasiprojective derived scheme $Z$ with affine diagonal acted by the complex reductive Lie groups $G$ and $G'$ with commuting actions. Set $X := [Z/G']$ and $M = T^*X$, which is a polarized graded Hamiltonian $G$-action. Associated to it, we consider the quasi-algebraic sheaf
    \begin{equation}
        \bB(X) := \theta^X_{*}\omega \in \QA^!(\Del_G(C)).
    \end{equation}
\end{definition}
Specializing to the case when $G = G_1 \times G_2$ is a product group, we see that $B(X)$ can be used as an integral kernel along the correspondence 
\begin{equation}
    \begin{tikzcd}
	{\Del_{G_1}(C)} & {\Del_G(C)} & {\Del_{G_2}(C)}
	\arrow["{p_1}"', from=1-2, to=1-1]
	\arrow["{p_2}", from=1-2, to=1-3]
\end{tikzcd}
\end{equation}
to define the functor
\begin{equation} \label{equation spectral quantization of 1-morphisms}
    \bB(X):= p_{2,*}\big(p_1^!(-) \otimes^! \theta^X_{\Del,*}\omega \big): \QA^!(\Del_{G_1}(C)) \to \QA^!(\Del_{G_2}(C)),
\end{equation}
which represents the $1$-morphisms of $\Cc$ associated to $(X, \alpha = 0)$.

\begin{example}[Extension of structure group]
    Let $H \subset G$ be a reductive subgroup, and consider the action $H \acts X = G \racts G$. Then $\Del_{H \times G}^X(C) \to \Del_{H \times G}(C)$ is exactly the graph of the extension of structure group morphism $\Del_H(C) \to \Del_G(C)$, hence the functor $\bB(X)$ is just pushing forward in this case.
\end{example}

The construction in the twisted polarized case relies crucially on the consideration of the Deligne moduli stack of \textit{nonreductive groups}, which are necessarily derived (quasi-algebraic) stacks. 

Consider a twisted polarized Hamiltonian action $M = T^*_\alpha X$, where we regard $\alpha$ as a $G$-equivariant map $\alpha: X \to B\GG_a$ with trivial $G$-action on the target. Post-composition under $\alpha$ gives rise to algebraic and analytic morphisms between the corresponding Hodge and Betti moduli stacks. These morphisms glue to provide a quasi-algebraic morphism between the associated Deligne moduli stacks. We may consider the composition of the latter with a Deligne orientation, as defined in Definition \ref{df Del orientation}, to give $\Ggr$-equivariant quasi-algebraic morphisms
\begin{equation} \label{eq def bf alpha}
    \boldsymbol{\alpha}: \Del_G^X(C) \to \Del_{\GG_a}(C) \overset{[C]_{\Del}}{\to} \mathbf{O}(2)[-1]
\end{equation}
Recall the exponential sheaf $\mathbf{exp} \in \QA^!(\mathbf{O}(2)[-1])^\shear$, a sheared quasi-algebraic sheaf on $\mathbf{O}(2)[-1]$ constructed in Section \ref{sc qa exponential sheaf}. It will now play a crucial role in the description of the quasi-algebraic sheaf attached to $M$.

\begin{definition}
Consider a quasiprojective derived scheme $Z$ with affine diagonal acted by the complex reductive Lie groups $G$ and $G'$ with commuting actions. Set $X := [Z/G']$ and consider a $G \times \Ggr$-map $\alpha : X \to B \GG_a$. Let $M = T^*_\alpha X$ be the corresponding twisted polarized graded Hamiltonian $G$-action. The associated quasi-algebraic sheaf is 
    \begin{equation}
        \bB(X,\alpha) := (\theta_{*}^X\boldsymbol{\alpha}^!\mathbf{exp})^\unshear \in \QA^!(\Del_G(C)).
    \end{equation}
\end{definition}
To unpack slightly, we start in the sheared category $\boldsymbol{\alpha}^!\mathbf{exp} \in \QA^!(\Del_G^X(C))^\shear$, we pushforward via $\theta_{*}^X$ to land in $\QA^!(\Del_G(C))^\shear$, and then unshear $\unshear: \QA^!(\Del_G(C))^\shear \overset{\sim}{\to}\QA^!(\Del_G(C))$ via the trivial $\Ggr$-action on $\Del_{G}(C)$. 

Specializing to the case when $G = G_1 \times G_2$ is a product group, we see that $B(X,\alpha)$ can be used as an integral kernel along the correspondence
\begin{equation}
    \begin{tikzcd}
	{\Del_{G_1}(C)} & {\Del_G(C)} & {\Del_{G_2}(C)}
	\arrow["{p_1}"', from=1-2, to=1-1]
	\arrow["{p_2}", from=1-2, to=1-3]
\end{tikzcd}
\end{equation}
to define the functor
\begin{equation} \label{equation spectral quantization of twisted 1-morphisms}
    \bB(X,\alpha):= p_{2,*} \big(p_1^!(-)^\shear \otimes^! (\theta^X_{\Del,*}\boldsymbol{\alpha}^!\mathbf{exp})\big)^\unshear: \QA^!(\Del_{G_1}(C)) \to \QA^!(\Del_{G_2}(C)),
\end{equation}
representing the $1$-morphism of $\Cc$ defined by $(X,\alpha)$.

We finish the section studying the composition of the representation of $1$-morphisms.

\begin{lemma}
    Let $M_{12} = T^*_\alpha X \in \Mor_{\Cc}(G_1, G_2)$ and $M_{23} = T^*_\beta Y \in \Mor_{\Cc}(G_2, G_3)$ and consider $M_{13} = M_{23} \circ M_{12} \in \Mor_{\Cc}(G_1, G_3)$ given by $Z = X \times Y/G_2$. Then there is an equivalence of functors
    $$\bB(Z, \alpha \times \beta) \simeq \bB(Y, \beta) \circ \bB(X, \alpha): \QA^!(\Del_{G_1}(C)) \to \QA^!(\Del_{G_3}(C)).$$
\end{lemma}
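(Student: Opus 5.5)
We will reduce the statement to a base change computation along a single correspondence diagram of quasi-algebraic stacks. Write $G = G_1\times G_2\times G_3$ and consider the relative Deligne stacks $\Del^X_{G_1\times G_2}(C)$, $\Del^Y_{G_2\times G_3}(C)$, and $\Del^{X\times Y}_{G}(C)$. The key geometric input is that mapping stacks commute with fibre products in the target: since $[(X\times Y)/G] \cong [X/G_1\times G_2]\times_{BG_2}[Y/G_2\times G_3]$ up to the factors $BG_1$, $BG_3$, we expect
\[
\Del^{X\times Y}_{G}(C) \cong \Del^X_{G_1\times G_2}(C)\times_{\Del_{G_2}(C)}\Del^Y_{G_2\times G_3}(C)
\]
as quasi-algebraic stacks. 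We would check this separately on the algebraic atlases $\Hodge$, $\ol\Hodge$ (where $\Maps_{\AA^1}(C_{\Hod},-)$ preserves limits) and on the analytic gluing piece $\Rep^{\an}\times(\AA^1-0)$ (using that analytification preserves finite limits and that $\Triv$ and $\check\eta$ are natural in the target). Compatibility with the gluing maps $\mu,\ol\mu$ then gives an isomorphism of quasi-algebraic fibre products in the sense of the definition of Section \ref{sc qalg}. Moreover, $\Del^{Z}_{G_1\times G_3}(C)$ for $Z=[(X\times Y)/G_2]$ coincides with $\Del^{X\times Y}_G(C)$ viewed over $\Del_{G_1\times G_3}(C)$, since $[Z/G_1\times G_3]=[(X\times Y)/G]$. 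Its structure map $\theta^Z$ is the composite of $\theta^{X\times Y}$ with the projection $\pi_{13}:\Del_G(C)\to\Del_{G_1\times G_3}(C)$.

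With this in hand, the functor computation is formal. Expanding $\bB(Y,\beta)\circ\bB(X,\alpha)$ via \eqref{equation spectral quantization of twisted 1-morphisms} produces a pushforward to $\Del_{G_2}(C)$ followed by pullback to $\Del_{G_2\times G_3}(C)$. Quasi-algebraic base change along the Cartesian square $\Del_{G}(C)=\Del_{G_1\times G_2}(C)\times_{\Del_{G_2}(C)}\Del_{G_2\times G_3}(C)$ lets us commute these operations. The projection formula then collects everything into
\[
\pi_{3,*}\big(\pi_1^!(-)^\shear\otimes^! \pi_{12}^!\,\theta^X_*\boldsymbol\alpha^!\mathbf{exp}\otimes^!\pi_{23}^!\,\theta^Y_*\boldsymbol\beta^!\mathbf{exp}\big)^\unshear .
\]
Base change again along the fibre product above identifies the tensor product of the two kernels with $\theta^{X\times Y}_*\big(\boldsymbol\alpha^!\mathbf{exp}\otimes^!\boldsymbol\beta^!\mathbf{exp}\big)$. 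Pushing forward along $\pi_{13}$ then recovers the kernel of $\bB(Z,\alpha\times\beta)$, with the $G_2$-direction integrated out. The trivial-twist case is the special case $\mathbf{exp}\rightsquigarrow\omega$, since $\omega$ is the $\otimes^!$-unit.

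The main obstacle is matching the exponential twists. We need $\boldsymbol\alpha^!\mathbf{exp}\otimes^!\boldsymbol\beta^!\mathbf{exp}\cong(\boldsymbol{\alpha\times\beta})^!\mathbf{exp}$, where the twist of $Z$ is the sum $\alpha+\beta : X\times Y\to B\GG_a$, using the group structure of $\GG_a$. This reduces to a multiplicativity property of $\mathbf{exp}$ under the addition map $m:\OO(2)[-1]\times_{\P^1}\OO(2)[-1]\to\OO(2)[-1]$, namely $m^!\mathbf{exp}\cong\mathbf{exp}\boxtimes^!_{\P^1}\mathbf{exp}$. We would try to prove this first on $\O(-2)$ algebraically. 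Under $\Kk_{\O(-2)}$, addition on the Koszul side corresponds to convolution on $\O(-2)$, so the statement becomes that $\delta_1$ convolved with itself... In fact the correct dual statement is that the skyscraper section at $1$ is grouplike for the fibrewise Hopf structure dual to addition, which follows fibrewise from \cite[Appendix A]{BZSV} together with Lemma \ref{lm pointwise restriction of Koszul}. We would then pull back along $\mathbf{q}$, using that $\mathbf q^!$ is monoidal. We must also check that shearing commutes with all functors used; the sheared pushforward and pullback remarks cover this. Finally, the Deligne orientation must be additive, i.e.\ $[C]_{\Del}$ is a group map. This holds because it is induced by the linear projection in Proposition \ref{pr Del_Ga}.
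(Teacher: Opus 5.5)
Your argument is essentially the paper's proof. You compose the integral kernels, identify $\Del^{X\times Y}_{G_1\times G_2\times G_3}(C)\simeq\Del^Z_{G_1\times G_3}(C)$ (justified more carefully than in the paper, as the quasi-algebraic fibre product $\Del^X_{G_1\times G_2}(C)\times_{\Del_{G_2}(C)}\Del^Y_{G_2\times G_3}(C)$), and use base change and the projection formula to rewrite the composite kernel as $\theta^Z_*\big(q_{12}^!\boldsymbol\alpha^!\mathbf{exp}\otimes^! q_{23}^!\boldsymbol\beta^!\mathbf{exp}\big)$.

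You go further than the paper only at the last step. There the paper just appeals to Lemma \ref{lemma composition of twisted polarization}, whereas you correctly isolate what is actually needed: $m^!\mathbf{exp}\simeq\mathbf{exp}\boxtimes_{\P^1}\mathbf{exp}$ and additivity of $[C]_{\Del}$. Proving that multiplicativity requires a global comparison map, not just the fibrewise statement from \cite[Appendix A]{BZSV}.
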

\begin{proof}
     Recall that composition of functors $B(Y, \beta) \circ B(X,\alpha)$ is realized by the integration kernel 
    \begin{equation} \label{equation composed kernel}
        p_{13,*}\big(p_{12}^! \bB(X,\alpha) \otimes^! p_{23}^! \bB(Y, \beta)\big) \in \QA^!(\Del_{G_1}(C) \times \Del_{G_3}(C))
    \end{equation}
    where we have abused notation slightly to write $\bB(X,\alpha), \bB(Y, \beta)$ as their representing kernels, and $p_{ij}$ denotes the projection map on $\Del_{G_1} \times \Del_{G_2} \times \Del_{G_3}$ onto its $i$th and $j$th components. To compute this integration kernel, we write down the relative Deligne moduli stacks responsible for each tensor factor
    $$\begin{tikzcd}
	& {\Del_{G_1 \times G_2 \times G_3}^{X \times Y} \simeq \Del_{G_1 \times G_3}^Z} & \\
	{\Del_{G_1 \times G_2}^X} & {\Del_{G_1 \times G_3}} & {\Del_{G_2 \times G_3}^Y} \\
	{\Del_{G_1 \times G_2}} & {\Del_{G_1 \times G_2 \times G_3}} & {\Del_{G_2 \times G_3}}
	\arrow["{q_{12}}"', from=1-2, to=2-1]
	\arrow["{\theta^Z}"', from=1-2, to=2-2]
	\arrow["{q_{23}}", from=1-2, to=2-3]
	\arrow["{\theta^X}", from=2-1, to=3-1]
	\arrow["{\theta^Y}"', from=2-3, to=3-3]
	\arrow["{p_{13}}", from=3-2, to=2-2]
	\arrow["{p_{12}}"', from=3-2, to=3-1]
	\arrow["{p_{23}}", from=3-2, to=3-3]
\end{tikzcd}$$
where the equivalence on the top is induced by the equivalence $X \times Y/(G_1 \times G_2 \times G_3) \simeq Z/G_1 \times G_3$. Since the objects $\bB(X,\alpha), \bB(Y,\beta)$ are sheaves on $\Del_{G_1 \times G_2}^X, \Del_{G_2 \times G_3}^Y$ pushed forward along $\theta^X, \theta^Y$, respectively, we may rewrite the object of \eqref{equation composed kernel} equivalently as
\begin{equation}
    \theta^Z_*\big(( \boldsymbol{\alpha} \circ q_{12})^!\mathbf{exp} \,  \otimes^! \,  (\boldsymbol{\beta} \circ q_{23})^!\mathbf{exp}\big) \in \QA^!(\Del_{G_1 \times G_3}(C)).
\end{equation}
By the identification of Lemma \ref{lemma composition of twisted polarization}, we recognize that the preceding expression is exactly the object $\bB(Z,\alpha \times \beta)$, as we wanted to show.
\end{proof}

\subsubsection{2-morphisms} \label{sc 2-mor}

As we did in Section \ref{sc 1-mor}, we start considering the case of a trivial polarization ({\it i.e.} $\alpha = 0$) to ease the reading.

Suppose that we have a polarized morphism $\varphi: M_1 = T^*X_1 \to M_2 = T^*X_2$ of graded Hamiltonian $G$-actions, {\it i.e.} $\varphi$ restricts to a morphism $X_1 \to X_2$ of graded $G$-schemes with $X_i$ as in the statement of Corollary \ref{co conditions for [X/H]}. Then $\varphi$ induces a diagram of quasi-algebraic morphisms 
\begin{equation}
    \begin{tikzcd}
	{\Del_G^{X_1}(C)} && {\Del_G^{X_2}(C)} \\
	& {\Del_G(C)}
	\arrow["{\boldsymbol{\varphi}}", from=1-1, to=1-3]
	\arrow["{\theta^{X_1}}"', from=1-1, to=2-2]
	\arrow["{\theta^{X_2}}", from=1-3, to=2-2]
\end{tikzcd},
\end{equation}
that commuting with the structural projections onto $\P^1$. By adjunction, we have a morphism of quasi-algebraic sheaves on $\Del_G(C)$,
\begin{equation} \label{equation Bphi}
\bB(\varphi):\bB(X_1) = \theta_{*}^{X_1}\omega = \theta_{*}^{X_2}\boldsymbol\varphi_*\omega \simeq \theta_{*}^{X_2}(\boldsymbol\varphi_*\boldsymbol\varphi^!\omega) \to \theta_{*}^{X_2}\omega = \bB(X_2).
\end{equation}
Specializing to the case when $G = G_1 \times G_2$ is a product group, and regarding $\bB(X_1), \bB(X_2): \QA^!(\Del_{G_1}(C)) \to \QA^!(\Del_{G_2}(C))$ as functors on categories of quasi-algebraic sheaves, \eqref{equation Bphi} induces a natural transformation
\begin{equation}
    \begin{tikzcd}
	{\QA^!(\Del_{G_1}(C))} && {\QA^!(\Del_{G_2}(C))}
	\arrow[""{name=0, anchor=center, inner sep=0}, "{\bB(X_1)}", bend left, from=1-1, to=1-3]
	\arrow[""{name=1, anchor=center, inner sep=0}, "{\bB(X_2)}"', bend right, from=1-1, to=1-3]
	\arrow["{\bB(\varphi)}", Rightarrow, from=0, to=1]
\end{tikzcd}
\end{equation}
by acting on the integration kernels defining $\bB(X_1)$ and $\bB(X_2)$. 

The preceding construction can be generalized to the twisted polarized setting. Given a pair of twisted polarized Hamiltonian spaces $M_i = T^*_{\alpha_i}X_i$ for $i = 1,2$, and suppose we have a $G \times \Ggr$ equivariant Lagrangian correspondence $Z \to M_1 \times M_2$ inducing a 0-Lagrangian morphism
$$z: Z^\shear/G \longrightarrow M_1^\shear/G \times_{T^*[3]BG} M_2^\shear/G,$$
such that $z$ is equipped with a polarization, {\it i.e.} there is a diagram
    \begin{equation}
        \begin{tikzcd}
	{M_1} & Z & {M_2} \\
	{X_1} & A & {X_2}
	\arrow[from=1-1, to=2-1]
	\arrow[from=1-2, to=1-1]
	\arrow[from=1-2, to=1-3]
	\arrow["\pi", from=1-2, to=2-2]
	\arrow[from=1-3, to=2-3]
	\arrow["{p_1}"', from=2-2, to=2-1]
	\arrow["{p_2}", from=2-2, to=2-3]
\end{tikzcd}
    \end{equation}
    so that $\pi$ is a Lagrangian fibration and we have an identification $p_1^!\alpha_1 \simeq p_2^!\alpha_2$, and the morphism $p_1$ is \'etale.

In this situation, we have a diagram of quasi-algebraic stacks over $\P^1$,
\begin{equation}
    \begin{tikzcd}
	& {\Del_G^A(C)} & \\
	{\Del_G^{X_1}(C)} && {\Del_G^{X_2}(C)} \\
	& {\Del_G(C)}
	\arrow["a"', from=1-2, to=2-1]
	\arrow["b", from=1-2, to=2-3]
	\arrow["p"', from=2-1, to=3-2]
	\arrow["q", from=2-3, to=3-2]
\end{tikzcd}
\end{equation}
where $a$ is \'etale, which can be verified by checking its relative cotangent complex. By the standard yoga of correspondences, we may produce a morphism
\begin{equation}
    \begin{tikzcd}
	{\QA^!(\Del_{G_1}(C))} && {\QA^!(\Del_{G_2}(C))}
	\arrow[""{name=0, anchor=center, inner sep=0}, "{\bB(X_1,\alpha_1)}", bend left, from=1-1, to=1-3]
	\arrow[""{name=1, anchor=center, inner sep=0}, "{\bB(X_2,\alpha_2)}"', bend right, from=1-1, to=1-3]
	\arrow["{\bB(z)}", Rightarrow, from=0, to=1]
\end{tikzcd}
\end{equation}
via the following morphism of sheaves on $\Del_G(C)$:
\begin{equation}
    \bB(z): \bB(X_1, \alpha_1) \overset{\varepsilon}{\to} (p_*a_*)(a^! \boldsymbol{\alpha}_1)\simeq (q_*b_*)(b^!\boldsymbol{\alpha}_2) \to \bB(X_2,\alpha_2)
\end{equation}
where $\varepsilon$ is induced by the unit $\mathrm{id} \to a_*a^* \simeq a_* a^!$ of the adjunction $(a^* \simeq a^!, a_*)$ afforded by \'etaleness of $a$.


We address now the composition of $2$-morphisms.

\begin{lemma}
Given the $2$-morphisms $Z_{12} \in \Mor_{\Cc}(T^*_{\alpha_1} X_1, T^*_{\alpha_2} X_2)$ and $Z_{23} \in \Mor_{\Cc}(T^*_{\alpha_2} X_2, T^*_{\alpha_3} X_3)$ and consider their composition $Z_{13} = Z_{23} \circ Z_{12} \in \Mor_{\Cc}(T^*_{\alpha_1} X_1, T^*_{\alpha_3} X_3)$. There is an equivalence of natural transformations
    $$\bB(Z_{13}) \simeq \bB(Z_{13}) \circ \bB(Z_{12}): \bB(X_1,\alpha_1) \to \bB(X_3,\alpha_3).$$
\end{lemma}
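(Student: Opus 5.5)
The statement contains an evident typo: the intended claim is $\bB(Z_{13}) \simeq \bB(Z_{23}) \circ \bB(Z_{12})$, and this is what I plan to prove. Both sides are morphisms of kernels in $\QA^!(\Del_G(C))$, and vertical composition of natural transformations acts on integral kernels by composing the kernel morphisms. So it suffices to compare two morphisms of sheaves
\[
\bB(X_1,\alpha_1) \to \bB(X_3,\alpha_3).
\]
The first is built from the correspondence $\Del_G^{A_{13}}(C)$. The second is the composite of the morphisms built from $\Del_G^{A_{12}}(C)$ and $\Del_G^{A_{23}}(C)$.

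\textbf{Step 1: identify the composed correspondence on relative Deligne stacks.} I first show that the fibre product $A_{13} = A_{12}\times_{X_2} A_{23}$ from \eqref{eq fibre prods comp 2-mor} induces a quasi-algebraic fibre product
\[
\Del_G^{A_{13}}(C) \simeq \Del_G^{A_{12}}(C) \times_{\Del_G^{X_2}(C)} \Del_G^{A_{23}}(C).
\]
The mapping stacks $\Hodge$, $\Rep$ commute with fibre products in the target, since $\Maps(S,-)$ preserves limits. Analytification preserves finite limits, and the gluing maps $\mu,\ol\mu$ are natural in the target, so the quasi-algebraic fibre product definition applies levelwise. With this identification, the maps $r_1, r_2$ of \eqref{eq commuting diagram for composition} induce maps $\mathbf r_1, \mathbf r_2$. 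The \'etale map $\mathbf a_{13} = \mathbf a_{12}\circ \mathbf r_1$ is the composite of \'etale maps, where $\mathbf r_1$ is \'etale by base change of $\mathbf a_{23}$.

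\textbf{Step 2: express both sides via unit maps.} Unwinding the definition of $\bB(z)$, the morphism $\bB(Z_{13})$ consists of three pieces: the unit $\varepsilon_{13}$ for $\mathbf a_{13}$, the identification $\mathbf a_{13}^!\boldsymbol\alpha_1^!\mathbf{exp} \simeq \mathbf b_{13}^!\boldsymbol\alpha_3^!\mathbf{exp}$ coming from the chain $r_1^!p_1^!\alpha_1\simeq\cdots\simeq r_2^!q_2^!\alpha_3$, and the counit for $\mathbf b_{13}$. I then use two facts. The unit of $(\mathbf a_{12}\mathbf r_1)^*\dashv(\mathbf a_{12}\mathbf r_1)_*$ factors as $\varepsilon_{12}$ followed by $\mathbf a_{12,*}\varepsilon_{\mathbf r_1}\mathbf a_{12}^*$, and the counit factors dually. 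Inserting these factorizations, the composite $\bB(Z_{23})\circ\bB(Z_{12})$ has a middle segment that passes through $\theta^{X_2}_*$: the counit of $\mathbf b_{12}$ followed by the unit of $\mathbf a_{23}$. Quasi-algebraic base change on the Cartesian square of Step 1 rewrites this segment as $\mathbf b_{12,*}$ applied to the unit of $\mathbf r_1$, followed by the counit of $\mathbf r_2$, after which the factorizations reassemble into $\varepsilon_{13}$ and the $\mathbf b_{13}$-counit. All statements can be checked on the atlas $\Hodge\sqcup\ol\Hodge$ and $\Rep\times(\AA^1-0)$, using the quasi-algebraic adjunction \eqref{eq qalg adjunction} together with the quasi-algebraic base change lemma.

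\textbf{Step 3: compatibility of the twist identifications and of shearing.} The isomorphisms of $\GG_a$-bundles pull back along $\boldsymbol\alpha$ and the Deligne orientation. They therefore induce isomorphisms of $\boldsymbol\alpha^!\mathbf{exp}$, and composing the chain above is functorial in pullback. Every functor involved is $\Ggr$-equivariant, so the whole diagram lives in the sheared categories, and unshearing commutes with the construction.

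The main obstacle is Step 2. It is the coherence check that the base-change isomorphism intertwines the units and counits correctly, a Beck–Chevalley-type compatibility, and it has to hold simultaneously on both algebraic charts and on the analytic overlap. I would attack it with the standard mate calculus: reduce to the algebraic statement for each chart of the atlas, and use faithfulness of the descent data in $\QA^!$ to glue.
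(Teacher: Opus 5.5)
Your proposal is correct and follows the same route as the paper. The paper's proof only says that the commutativity of the composition diagram and the Cartesianity of $A_{13}=A_{12}\times_{X_2}A_{23}$ (and hence of the induced square of relative Deligne stacks) allow one to apply base change. Your Steps 1--3 unpack that sketch faithfully. In particular, the Beck--Chevalley identity you isolate, that the counit of $\mathbf b_{12}$ followed by the unit of $\mathbf a_{23}$ equals $\mathbf b_{12,*}$ of the unit of $\mathbf r_1$ followed by the counit of $\mathbf r_2$, is exactly the content the paper leaves implicit. You are also right that the displayed statement should read $\bB(Z_{13})\simeq\bB(Z_{23})\circ\bB(Z_{12})$.
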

\begin{proof}
The statement derives from the commutativity of \eqref{eq commuting diagram for composition}, and Cartesianity of the top part of the diagrams on each of the faces, which allow the use of base change theorems.   
\end{proof}

\subsubsection{Comparison with A-twist, or automorphic quantization}\label{subsubsection automorphic quantization}

Even though our constructions only concern the B-twist, we explain briefly the A-side as well for comparison of numerology. 

On the A-side, or automorphic side, one takes the data of the Moore--Tachikawa category and transgresses over \textit{1-Calabi--Yau objects} to consider the situation of \textit{0-shifted} quantization. More precisely, one picks a spin structure $K^{1/2}$ of the curve $C$, and performs an AKSZ transgression of the objects and morphisms of the Moore--Tachikawa category: for a $\Ggr$-equivariant 1-shifted Lagrangian $Z \to T^*[1]BG$, we apply the functor $\mathrm{Sect}_{K^{1/2}}(C, -)$ as in \cite{ginzburg&rozenblyum} to obtain a 0-Lagrangian morphism
\begin{equation}\label{equation Gaiotto Lagrangian}
    \mathrm{Sect}_{K^{1/2}}(C, Z) \longrightarrow \mathrm{Sect}_{K^{1/2}}(C, T^*[1]BG) \simeq \mathrm{Higgs}_G,
\end{equation}
where $\mathrm{Sect}_{K^{1/2}}(C, Z)$ are the \textit{Gaiotto Lagrangians} introduced in \textit{op. cit} and play a central role in the first author's analogous recent work on the A-side \cite{CHY}. 

Since $\mathrm{Higgs}_G \simeq T^*\mathrm{Bun}_G$ is polarized, the 0-shifted geometric quantization problem is relatively well-formulated: one seeks to lift Gaiotto's Lagrangians to $\mathcal{D}$-modules over $\mathrm{Bun}_G$ with \textit{microlocal support} given by \eqref{equation Gaiotto Lagrangian}. This latter is accomplished by BZSV's de Rham period sheaves \cite{BZSV} in the hyperspherical case, and more generally by recent work of Khan--Kinjo--Park--Safronov \cite{KKPS} when additional anomaly cancellation data is provided on the relevant Hamiltonian actions\footnote{More precisely, in \cite{KKPS} the authors have constructed perverse sheaves on $\mathrm{Bun}_G$ given Hamiltonian actions with anomaly cancellation data.}. 

On the B-side, or spectral side, one first \textit{shears} the data of the category $\mathcal{C}$ by the weight 2 $\Ggr$-action, and then one transgresses over \textit{2-Calabi--Yau objects} to consider the situation of \textit{1-shifted quantization}. More precisely, given a Hamiltonian action $Z \to T^*[1]BG$, we first shear by the $\Ggr$-action to obtain a 3-shifted Lagrangian
$$Z^\shear \longrightarrow (T^*[1]BG)^\shear \simeq \mathfrak{g}^*[2]/G.$$
and then we perform AKSZ transgression over the 2-Calabi--Yau object $C_{\Dol}$ to obtain a 1-shifted Lagrangian morphism
\begin{equation}\label{equation 1-shifted Gaiotto Lagrangian}
    \mathrm{Map}(C_{\Dol}, Z^\shear) \longrightarrow \mathrm{Map}(C_{\Dol}, \mathfrak{g}^*[2]/G) \simeq T^*[1]\Higgs_G.
\end{equation}
Since $T^*[1]\Higgs_G$ is polarized, the 1-shifted geometric quantization problem is well-formulated: one seeks to lift these 1-Lagrangians to ind-coherent sheaves on $\mathrm{Higgs}_G$ with \textit{coherent singular support} (in the sense of \cite{arinkin&gaitsgory}) given by \eqref{equation 1-shifted Gaiotto Lagrangian}, the 1-shifted analogue of Gaiotto's Lagrangians.

\subsection{The case of BZSV triples}
\label{sc BZSV}

This section, culminating in Corollaries \ref{co relation with BZSV} and \ref{co relation with Gaiotto}, is devoted to explain the relation of the constructions achieved in the preceding Section \ref{sc spectral quantization}, the spectral side of the relative Langlands program \cite{BZSV}, and Gaiotto's (BBB)-branes (denoted by $\mathcal{V}(G,C,M)$ in \cite{Gaiotto}). These serve as valuable data points which indicate that our quantization scheme interpolates known progress towards the construction of (BBB)-branes. 

\subsubsection{Hamiltonian $G$-spaces and BZSV triples}

A well-behaved collection of twisted polarized Hamiltonian actions have been singled out by the work of Ben-Zvi--Sakellarids--Venkatesh \cite{BZSV}, which we review briefly for the reader's convenience, although we remark that we do not restrict to \textit{hyperspherical actions} which appear to be exceptionally well-behaved under Langlands duality, hence their central role in \textit{op. cit}.

\begin{definition} \label{definition BZSV triple}
    Let $G$ be a reductive group. A \textit{BZSV triple} for $G$ is a triple $\mathbf{t} = (H, S, \rho)$, where
    \begin{itemize}
        \item $H$ is a reductive group with an inclusion $H \to G$, 
        \item $S$ is a finite dimensional symplectic representation of $H$ equipped with a commuting scaling $\GG_m$-action, and
        \item $\rho: \mathfrak{sl}_2 \to \mathfrak{g}$ is a Lie algebra homomorphism whose image commutes with the image of the Lie algebra $\mathrm{Im}(\mathfrak{h}) \subset \g$. We write $e,f,h$ for the standard basis of the source $\mathfrak{sl}_2$ and, when the context is clear, we abuse notation to think of $e,f,h$ as their $\rho$-image in $\mathfrak{g}$. 
    \end{itemize}
\end{definition}

\begin{remark}
In the first paragraph of Gaiotto--Witten's paper on $S$-duality of boundary conditions \cite{gaiotto&witten}, they have already identified such triples as the labeling data of boundary conditions for the SUSY Yang--Mills theory responsible for geometric Langlands.
\end{remark}

BZSV triples encode certain graded Hamiltonian $G$-actions via \textit{Whittaker induction}. Given a BZSV triple $(H, S, \rho)$, we consider the unipotent subgroup $U_\rho \subset G$ defined by $\rho$ by the following procedure: decomposing $\g$ via its $\rho(h)$-weights $\mathfrak{g} = \oplus_{j \in \ZZ} \, \mathfrak{g}_j$, we write
\begin{equation} \label{eq JM computation}
    \mathfrak{u}_\rho := \bigoplus_{k > 0} \, \mathfrak{g}_j \supseteq \mathfrak{u}_\rho^+ := \bigoplus_{k > 1} \, \mathfrak{g}_j
\end{equation}
which is a unipotent Lie subalgebra of the parabolic Lie subalgebra $\oplus_{k \geq 0} \, \mathfrak{g}_j$, which contains $\mathfrak{h}$. We write $U_\rho$, $U_\rho^+$ and $P_\rho = H U_\rho$ for the associated Lie subgroups in $G$ with Lie algebras $\mathfrak{u}_\rho$, $\mathfrak{u}_\rho^+$ and $\mathfrak{h} \oplus \mathfrak{u}_\rho$, as usual, and we regard $(\mathfrak{u}_\rho/\mathfrak{u}_\rho^+)_f$ as a Hamiltonian $P_\rho$-space where:
\begin{itemize}
\item the $H$-action is via the adjoint action, 
\item $U_\rho$ acts by translation via $U_\rho/U_\rho^+ \cong \mathfrak{u}_\rho/\mathfrak{u}_\rho^+$
\item the symplectic form is given by
\[
(x,y) \longmapsto \langle f, [x,y]\rangle, 
\]
and the moment map is given by
\[
(x,y) \longmapsto \langle f, [x,y]\rangle.
\]
\end{itemize}
Finally, the Hamiltonian $G$-space attached to the triple $(H, S, \rho)$ is obtained by \textit{symplectic induction} from $H U_\rho$ to $G$ by the formula
\begin{equation} \label{eq Whittaker induction}
\mathbf{t} = (H,S, \rho) \longmapsto M_{\mathbf{t}} := 
\bigg(S \times (\mathfrak{u}/\mathfrak{u}_+)_f \times^{HU}_{(\mathfrak{h}+\mathfrak{u})^*} \, T^*G\bigg),
\end{equation}
where $P_\rho$ acts diagonally on the product and $M_{\mathbf{t}}$ is equipped with a commuting $\GG_m$-action to be described below. 

By the theory of Slodowy slices, one can also rewrite $M_{\mathbf{t}}$ as a vector bundle over the homogeneous space $H \backslash G$ (although its symplectic nature becomes less obvious)
\begin{equation}
    M_{\textbf{t}} = \big[S \oplus (\mathfrak{h}^\perp \cap \mathrm{ker}(\mathrm{Ad}^*(e))\big] \times^H G.
\end{equation}
In this formulation, the commuting $\Ggr$ action on $M_{\mathbf{t}}$ is easier to describe: 
\begin{itemize}
\item $\Ggr$ acts by the weight 1 scaling action on $S$, 
\item $\Ggr$ acts on $\mathrm{ker}(\mathrm{Ad}^*(e))$ by weight $2+t$ on the direct summand $\mathrm{ker}(\mathrm{Ad}^*(e)) \cap \mathfrak{g}_t$, and
\item $\Ggr$ acts on $G$, after identifying $\mathrm{Lie}(\GG_m) = \mathrm{span}_\CC(h)$, by left multiplication by $\exp(\rho)$.
\end{itemize}

If $\rho = 0$ and $S = T^*V$ is a polarized representation, then the associated Whittaker induction produces a \textit{polarized} Hamiltonian action
\begin{equation}
    M_{\mathbf{t}} \simeq T^*(V \times^H G).
\end{equation}

\begin{remark}
As explained in Section 3.5.1 of \cite{BZSV}, the Hamiltonian $G$-space, $M_{\mathbf{t}}$, built out of the above Whittaker induction procedure will always satisfy 3 out of the 5 conditions of being a \textit{hyperspherical $G$-variety}; experts will observe that we are dropping the coisotropicity condition (which is a smallness condition on $M_{\mathbf{t}}$ relative to the $G$-action), and the condition that the stabilizer of a generic point in $M_{\mathbf{t}}$ be connected. For our purposes of constructing (BBB)-branes none of these relaxed conditions will present a problem, it is rather in the discussion of $S$-duality that we see consequences of leaving the hyperspherical regime. However, one has evidence (see Section 6 of \cite{toric periods} for instance) that the connected stabilizers assumption is not essential. 
\end{remark}

\subsubsection{Relation with BZSV's $L$-sheaves and Gaiotto's (BBB)-branes}
A canonical source of examples arises from BZSV triples with nontrivial $\rho$-component. Given $\mathbf{t} = (H,S = T^*V,\rho)$ a BZSV triple for $G$, we may consider the additive character
$$\psi_\rho: U_\rho \to U_\rho/[U_\rho,U_\rho] \overset{\simeq}{\to} \mathfrak{u}_\rho^{\mathrm{ab}} \overset{\rho(f)}{\to} \GG_a,$$
which induces the twisted polarization on $M_\mathbf{t}$ by
\begin{equation}
    \alpha_\mathbf{t}: X_\mathbf{t}/G = (V \times^{HU_\rho} G)/G \simeq V/HU_\rho\to BU_\rho \overset{B\psi_\rho}{\to} B\GG_a.
\end{equation}
Taking mapping stacks from Simpson shapes and gluing over $\P^1$, we obtain a diagram of quasi-algebraic stacks over $\P^1$
\begin{equation} \label{eq diagram for constructing B for t}
\begin{tikzcd}
	& {\Del_{HU_\rho}^V(C) \simeq \Del_G^{X_\mathbf{t}}}(C) & \\
	{\mathbf{O}(2)[-1]} && {\Del_G(C)}
	\arrow["{\boldsymbol{\alpha}_\mathbf{t}}"', from=1-2, to=2-1]
	\arrow["{\theta^{X_{\mathbf{t}}}}", from=1-2, to=2-3]
\end{tikzcd}
\end{equation}
where $\boldsymbol{\alpha}_\mathbf{t}$ is given in \eqref{eq def bf alpha}. The closed embedding $C_{\dR} \hookrightarrow C_{\Hod}$ induces, by pre-composition, the (algebraic) closed embedding 
\[
\jmath^X_{\dR} : \Loc_G^X(C) \hookrightarrow \Hodge_G^X(C)
\] 
Similarly, we recover from Section \ref{sc qa exponential sheaf} the morphisms
\[
\imath : \AA^1[-1] \hookrightarrow \O(2)[-1]
\]
given by restriction to the fibre over $1 \in \PP^1$. Hence, the quasi-algebrification of the above provides the following morphism within the quasi-algebraic category,  
\[
\mathbf{j}^X_{\dR} : \Loc_G^X(C)^{\qalg} \hookrightarrow \Del_G^X(C)
\]
and
\[
\mathbf{i} : \AA^1[-1] \hookrightarrow \OO(2)[-1],
\]
where we have dropped the superindex $\qalg$ from the source to ease the reading. Consider as well
\begin{equation}\label{eq diagram for constructing B_dR for t}
\begin{tikzcd}
	& {\Loc_{HU_\rho}^V(C) \simeq \Loc_G^{X_\mathbf{t}}}(C) & \\
	{\AA[-1]} && {\Loc_G(C)}
	\arrow["{\alpha_{\dR,\mathbf{t}}}"', from=1-2, to=2-1]
	\arrow["{\theta_{\dR}^{X_{\mathbf{t}}}}", from=1-2, to=2-3]
\end{tikzcd},
\end{equation}
where $\alpha_{\dR,\mathbf{t}}$ is obtained as in \eqref{eq def bf alpha}, composing the de Rahm orientation $[C_{\dR}]$ with the morphism obtained by post-composition under $\alpha_{\mathbf{t}}$, and $\theta^X_{\dR}$ is provided by post-composition with respect to $[X/G] \to BG$. 

Recall from Section \ref{sc qa exponential sheaf} the (algebraic) exponential sheaf $\exp_{\AA^1} \in \QC^!(\AA^1[-1])^{\shear}$ as constructed in \cite{BZSV}. 
With all of the above, one can now remind the following definition which is central in the work of {\it op. cit.}

\begin{definition}[Definition 11.6.4 of \cite{BZSV}]
Given a BZSV triple $\mathbf{t} = (H, S = T^*V, \rho)$, define the associated (unnormalized) L-sheaf as 
\[
\Ll_{\mathbf{t}} := (\theta_{\dR, *}^X\alpha_{\dR, \mathbf{t}}^!\exp_{\AA^1})^\unshear \in \QC^!(\Loc_G(C))
\]
\end{definition}

By construction, the diagram \eqref{eq diagram for constructing B for t} specializes to \eqref{eq diagram for constructing B_dR for t} in the sense that the following diagrams are Cartesian,
\[
\begin{tikzcd}
	{\Loc_G^{X_\mathbf{t}}(C)} & {\Del_G^{X_\mathbf{t}}(C)} \\
	{\AA^1[-1]} & {\OO(2)[-1]}
	\arrow[from=1-1, to=1-2, "\mathbf{j}^X_{\dR}"]
	\arrow[from=1-1, to=2-1, "\alpha_{\dR, \mathbf{t}}^{\qalg}"']
	\arrow[from=1-2, to=2-2, "\boldsymbol{\alpha}_{\mathbf{t}}"]
	\arrow[from=2-1, to=2-2, "\mathbf{i}"']
\end{tikzcd}, \quad \begin{tikzcd}
	{\Loc_G^{X_\mathbf{t}}(C)^{\qalg}} & {\Del_G^{X_\mathbf{t}}(C)} \\
	{\Loc_G(C)^{\qalg}} & {\Del_G(C)}
	\arrow[from=1-1, to=1-2, "\mathbf{j}^{X_{\mathbf{t}}}_{\dR}"]
	\arrow[from=1-1, to=2-1, "\theta_{\dR}^{X_{\mathbf{t}},\qalg}"']
	\arrow[from=1-2, to=2-2, "\theta^{X_{\mathbf{t}}}"]
	\arrow[from=2-1, to=2-2, "\mathbf{j}_{\dR}"']
\end{tikzcd}.
\]

The statement below follows immediately after base change under the preceding diagrams.

\begin{corollary} \label{co relation with BZSV}
    Let $\bB(X_{\mathbf{t}},\alpha_{\mathbf{t}}) \in \QA^!(\Del_G(C))$ be the quasi-algebraic sheaf associated to the (Hamiltonian action encoded by the) BZSV triple $\mathbf{t} = (H, S = T^*V, \rho)$ for $G$. Then its underlying ind-coherent sheaf over the de Rham moduli stack coincides with BZSV's (unnormalized) $L$-sheaf associated to the BZSV triple $\mathbf{t}$, {\it i.e. }
    $$\mathbf{j}_{\dR}^!\bB(X_\mathbf{t},\alpha_\mathbf{t}) \simeq \left ( \Ll_{\mathbf{t}}\right )^{\qalg}.$$
\end{corollary}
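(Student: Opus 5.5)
The plan is a direct base change computation using the two Cartesian squares displayed just before the statement. By definition,
\[
\mathbf{j}_{\dR}^!\bB(X_\mathbf{t},\alpha_\mathbf{t}) = \mathbf{j}_{\dR}^!\big(\theta^{X_\mathbf{t}}_*\boldsymbol{\alpha}_\mathbf{t}^!\mathbf{exp}\big)^\unshear .
\]
The first step is to move $\mathbf{j}_{\dR}^!$ inside the unshearing. Since $\mathbf{j}_{\dR}$ is $\Ggr$-equivariant and both $\Loc_G(C)$ and $\Del_G(C)$ carry the trivial $\Ggr$-action, the sheared exceptional pullback commutes with the canonical identification $(-)^\unshear$. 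We are therefore reduced to computing $\mathbf{j}_{\dR}^!\theta^{X_\mathbf{t}}_*\boldsymbol{\alpha}_\mathbf{t}^!\mathbf{exp}$ in the sheared category.

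Next we apply the quasi-algebraic base change lemma to the right-hand Cartesian square. This gives
\[
\mathbf{j}_{\dR}^!\theta^{X_\mathbf{t}}_* \simeq (\theta^{X_\mathbf{t},\qalg}_{\dR})_*(\mathbf{j}^{X_\mathbf{t}}_{\dR})^!.
\]
Functoriality of $(\cdot)^!$, together with commutativity of the left-hand square, gives
\[
(\mathbf{j}^{X_\mathbf{t}}_{\dR})^!\boldsymbol{\alpha}_\mathbf{t}^! \simeq (\alpha^{\qalg}_{\dR,\mathbf{t}})^!\mathbf{i}^!.
\]
For this step only commutativity of the square is needed, not Cartesianity. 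Everything has passed to the sheared categories by the remark on $\Ggr$-equivariant morphisms.

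Third, we identify $\mathbf{i}^!\mathbf{exp}\simeq \exp_{\AA^1}^{\qalg}$. This is exactly the statement $\mathbf{i}_x^!\mathbf{exp}\cong\exp^{\qalg}_{\AA^1}$ recorded at the end of Section \ref{sc qa exponential sheaf}, taken at $x=1$. It rests on Lemma \ref{lm pointwise restriction of Koszul} and \eqref{eq justification of exp 1}. The one subtlety is that \eqref{eq justification of exp 1} is phrased with $i_x^*$, while here we need $i_x^!$. For the regular embedding of a fibre these differ by a shifted line bundle twist, trivial on $\AA^1[-1]$; I would check that this shift is absorbed or is already built into the paper's convention for $\mathbf{i}_x^!$. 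It then remains to observe that $(\cdot)^{\qalg}$ commutes with $!$-pullback and pushforward for algebraic morphisms between algebraic stacks, which holds since $(\cdot)^{\qalg}$ is compatible with these functors by Lemma \ref{lm qalg push and pull}. Assembling the steps yields $\big((\theta_{\dR,*}^{X}\alpha_{\dR,\mathbf{t}}^!\exp_{\AA^1})^\unshear\big)^{\qalg}=\Ll_\mathbf{t}^{\qalg}$.

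The main obstacle is justifying that the two squares really are Cartesian in the quasi-algebraic sense. For the right-hand square, the fibre of $\theta^{X_\mathbf{t}}$ over $\Loc_G(C)\subset\Hodge_G(C)$ must be $\Loc_G^{X_\mathbf{t}}(C)$. This is a mapping-stack computation: a map $C_{\Hod}\to[X/G]$ restricted to the fibre $C_{\dR}$ over $1\in\AA^1$ is the same as a map $C_{\dR}\to[X/G]$, using that $\Maps_{\AA^1}(C_{\Hod},-)$ base changes along $\{1\}\to\AA^1$. I would verify this on the algebraic atlas $\Hodge^X_G(C)\sqcup\Hodge^X_G(\ol C)$, where the $\ol C$-part and the gluing locus contribute nothing over $1\in\AA^1_{\Hod}$. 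This is routine given that $\mathbf{j}_{\dR}$ factors through the $\AA^1_{\Hod}$ chart.
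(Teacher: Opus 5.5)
Your proposal is the paper's proof. The paper disposes of the corollary in one line (``base change under the preceding diagrams''), and your steps are exactly its unpacking: commuting $\mathbf{j}_{\dR}^!$ with unshearing, base change along the right square in the correct form $\mathbf{j}_{\dR}^!\theta^{X_\mathbf{t}}_*\simeq(\theta^{X_\mathbf{t},\qalg}_{\dR})_*(\mathbf{j}^{X_\mathbf{t}}_{\dR})^!$, and only commutativity of the left square. The $i_x^*$-versus-$i_x^!$ issue you flag is not addressed in the paper either, which passes from \eqref{eq justification of exp 1} to $\mathbf{i}_x^!\mathbf{exp}\cong\exp^{\qalg}_{\AA^1}$ ``by functoriality''; in fact the shift is not absorbed, because the fibre inclusion is a base change of the codimension-one embedding $\{1\}\hookrightarrow\PP^1$ with trivial normal line, so $i_x^!\simeq i_x^*[-1]$, and with that normalization the argument yields $\Ll_{\mathbf{t}}^{\qalg}[-1]$ unless $\mathbf{exp}$ is shifted by $[1]$ (i.e.\ normalized by $!$-restriction rather than $*$-restriction).
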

Similarly, we have a Dolbeault restriction to the (relative) Hitchin moduli stack
$$\mathbf{j}_{\Dol}^X: \Higgs_G^X(C) \to \Del_G^X(C)$$
along which one can restrict any quasi-algebraic sheaf on $\Del_G^X(C)$.
\begin{corollary}\label{co relation with Gaiotto}
    Let $\bB(X_\mathbf{t}, \alpha_{\mathbf{t}})$ be the quasi-algebraic sheaf associated to the (Hamiltonian action encoded by the) BZSV triple $\mathbf{t} = (H, S = T^*V,0)$ for $G$. Suppose that the Dirac--Higgs bundle on $H$ with coefficients in $V$ is concentrated in cohomological degree 1 when restricted to the stable moduli space. Then its underlying ind-coherent sheaf over the de Rham moduli stack coincides with Gaiotto's $\mathcal{V}(G,C,M_{\mathbf{t}})$.
\end{corollary}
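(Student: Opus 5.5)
Since $\rho = 0$, the unipotent group $U_\rho$ is trivial and the character $\psi_\rho$ is zero. Hence the twist $\alpha_{\mathbf{t}} : X_{\mathbf{t}}/G \simeq V/H \to B\GG_a$ factors through the point. I will therefore first reduce $\bB(X_{\mathbf{t}},\alpha_{\mathbf{t}})$ to the untwisted kernel. Concretely, $\boldsymbol{\alpha}_{\mathbf{t}}$ factors through the zero section $\P^1 \to \OO(2)[-1]$. I will check from the construction of $\mathbf{exp}$ in Section \ref{sc qa exponential sheaf} that the $!$-restriction of $\mathbf{exp}$ to the zero section is the monoidal unit, up to the sheared grading. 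Fibrewise this is the statement that $\Kk_{\AA^1}(k_1)$ restricts at $0 \in \AA^1[-1]$ to $k$, by Lemma \ref{lm pointwise restriction of Koszul}. After unshearing, this should give
\[
\bB(X_{\mathbf{t}},\alpha_{\mathbf{t}}) \simeq \theta^{X_{\mathbf{t}}}_* \omega = \bB(X_{\mathbf{t}}),
\]
which is the polarized case of Section \ref{sc 1-mor}.

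Next I restrict to the de Rham locus. I apply quasi-algebraic base change to the second Cartesian square displayed before Corollary \ref{co relation with BZSV}, which holds verbatim for $\rho = 0$. Using $\mathbf{j}^{X,!}_{\dR}\omega \simeq \omega$, this gives
\[
\mathbf{j}_{\dR}^! \bB(X_{\mathbf{t}}) \simeq \big(\theta^{X_{\mathbf{t}}}_{\dR,*}\omega_{\Loc_H^V(C)}\big)^{\qalg},
\]
with $\theta^{X_{\mathbf{t}}}_{\dR} : \Loc_H^V(C) \simeq \Loc_G^{X_{\mathbf{t}}}(C) \to \Loc_G(C)$. Equivalently, this is Corollary \ref{co relation with BZSV} specialised to $\rho = 0$, where the $L$-sheaf is untwisted.

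It remains to identify this pushforward with Gaiotto's $\mathcal{V}(G,C,M_{\mathbf{t}})$. I factor $\theta_{\dR}$ as
\[
\Loc_H^V(C) \xrightarrow{\ \pi\ } \Loc_H(C) \xrightarrow{\ e\ } \Loc_G(C),
\]
where $e$ is induction of structure group. The map $\pi$ is the relative mapping stack into the linear stack $V/H \to BH$. It is therefore the derived total space of the complex $R\Gamma(C_{\dR}, V_E)$ over the point $E \in \Loc_H(C)$; this complex is the de Rham realisation of the Dirac bundle with coefficients in $V$. On the stable locus the hypothesis says that this complex is concentrated in degree $1$, namely in $H^1(C_{\dR},V_E)$. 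So $\pi$ is (a shift of) the total space of a vector bundle $\mathbb{D}$, the Dirac bundle in Gaiotto's sense. For a linear stack the pushforward of $\omega$ is computed by the relative symmetric algebra: $\pi_*\omega \simeq \Sym^\bullet(\mathbb{D}^\vee)$, tensored with the relative dualising line $\det$ and placed in the appropriate shift. Then $e_*$ of this object is, by definition, $\mathcal{V}(G,C,M_{\mathbf{t}})$ transported to the de Rham side.

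The main obstacle is this last identification. I need to match the shift and determinant twist coming from $\omega$ with Gaiotto's conventions, including any $K^{1/2}$ or orientation normalisation. I also need to justify passing from the stable locus, where the hypothesis applies, to the statement over the whole stack; I would restrict both sides to the stable locus and compare there. I would first verify the degree bookkeeping fibrewise, using the Koszul duality $\QC(\AA^1)^{\shear} \simeq \QC^!(\AA^1[-1])$ of Section \ref{sc qa exponential sheaf} applied to $V$ in place of $\AA^1$.
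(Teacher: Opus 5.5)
You follow the statement's wording and restrict to the de Rham fibre, and that is where the argument breaks. Your skeleton matches the paper's: reduce to the untwisted kernel $\theta^{X_\mathbf{t}}_*\omega$ (which the paper does tacitly), base change to one fibre of $\tau$, factor through the $H$-stack, recognise the relative mapping stack as a linear stack, and compute $\pi_*\omega$ as a symmetric algebra. But the last step cannot be carried out over $\Loc_G(C)$.

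Gaiotto's $\mathcal{V}(G,C,M_{\mathbf{t}})$ is an object over $\Higgs_G(C)$, built from the Dirac--Higgs complex $\mathbf{H}(C,\Ee_V\xrightarrow{\varphi}\Ee_VK_C)$ of the universal Higgs bundle, and the hypothesis of the corollary is about that complex. Inside the Deligne stack, $\Loc_G(C)$ and $\Higgs_G(C)$ are the fibres of $\tau$ over $1$ and $0$. Nothing algebraic, or even holomorphic, identifies them: non-abelian Hodge theory is only a real-analytic correspondence of moduli spaces. So ``$e_*$ of this object is, by definition, $\mathcal{V}(G,C,M_\mathbf{t})$ transported to the de Rham side'' has no meaning. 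Likewise, ``the hypothesis says $R\Gamma(C_{\dR},V_E)$ is concentrated in degree $1$'' is not what is assumed. At best it could be imported on the stable locus via Simpson's comparison of Dolbeault and de Rham cohomology of harmonic bundles, and even that would not produce Gaiotto's object.

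The words ``de Rham moduli stack'' in the statement are a slip carried over from Corollary~\ref{co relation with BZSV}. The sentence just before the corollary introduces $\mathbf{j}^X_{\Dol}$, Theorem~B(2) speaks of $\Higgs_G(C)$, and the paper's proof computes $\mathbf{j}^{X,!}_{\Dol}\bB(X_\mathbf{t},\alpha_\mathbf{t})|_{\Higgs_H^{\st}}$. The fix is to restrict along $\mathbf{j}_{\Dol}$ instead. Then $\Higgs_H^V(C)=\Maps(C_{\Dol},[V/H])\to\Higgs_H(C)$ is the linear stack of $R\Gamma(C_{\Dol},\Ee_V)=\mathbf{H}(C,\Ee_V\xrightarrow{\varphi}\Ee_VK_C)$. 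That is exactly the complex in the hypothesis, so no transport is needed.

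The ``main obstacle'' you flag is then a parity issue rather than a normalisation one. Your formula $\Sym^\bullet(\mathbb{D}^\vee)\otimes\det[\,\cdot\,]$ is the one for the total space of an even vector bundle, and as written it yields a polynomial algebra, not Gaiotto's exterior algebra. When the complex is $\mathbf{H}^1[-1]$, the map $\pi$ is finite, with $\pi_*\mathcal{O}=\bigoplus_k\wedge^k(\mathbf{H}^1)^\vee[k]$. Hence $\pi_*$ of the relative dualising complex is its dual, $\bigoplus_k\wedge^k\mathbf{H}^1[-k]\simeq\Sym^\bullet(\mathbf{H}^1[-1])$. This is the symmetric algebra of the Dirac--Higgs complex itself, which the Koszul sign rule turns into $\wedge^\bullet\mathbf{H}^1$. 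That is Gaiotto's definition on $\Higgs_H^{\st}$ before pushing forward to $\Higgs_G$, with no determinant or $K^{1/2}$ twist to match. As in the paper, the comparison is only asserted on the stable locus.
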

\begin{proof}
    When $\mathbf{t} = (H, S = T^*V, 0)$ is a representation of cotangent type for a reductive subgroup $H \subset G$, Gaiotto proposes the definition 
    $$\mathcal{V}(G,C,M_{\mathbf{t}}) := \wedge^\bullet \mathbf{H}^1(C, \Ee_V \overset{\varphi}{\to} \Ee_VK_C)$$
    where $\Ee_V \overset{\varphi}{\to} \Ee_VK$ is the universal $H$-Higgs field with coefficients in $V$, regarded as a derived vector bundle on $\Higgs_H^{\mathrm{st}}$, which one then pushes forward to $\Higgs_G$. 

    In this case, the relative stack $\Higgs_G^X \simeq \Higgs_H^V$ is the total space of the Dirac--Higgs complex $\mathbf{H}(C, \Ee_V \overset{\varphi}{\to} \Ee_V K_C)$ over $\Higgs_H$. Thus, the pushforward of the space of distributions along $\Higgs_H^V \to \Higgs_H$ coincides with the relative symmetric algebra of the Dirac--Higgs complex itself. With the assumption that this complex has support in cohomological degree 1 when restricted to $\Higgs_H^{\mathrm{st}}$, we see that 
    $$\mathbf{j}_{\Dol}^{X, !}\bB(X_{\mathbf{t}}, \alpha_{\mathbf{t}})|_{\Higgs_H^{\mathrm{st}}} \simeq \mathrm{Sym}^\bullet \mathbf{H}(C, \Ee_V \overset{\varphi}{\to} \Ee_V K_C) \simeq \wedge^\bullet \mathbf{H}^1(C, \Ee_V \overset{\varphi}{\to} \Ee_VK_C)$$
    by the Koszul sign rule, as we wanted to confirm.
\end{proof}

\section{Towards hyperK\"ahler structures and (BBB)-branes}
\label{sc BBB-branes}

This last section is devoted to motivating our main result Theorem \ref{th representation of Cc} as an initial step towards the B-twist construction of boundary conditions for the Langlands TQFT. This construction is summarized in Conjecture \ref{cj B-twist}.

\subsection{Shifted (pre-)twistor structures}

Penrose's work on twistor theory \cite{penrose} leads to the construction of the so-called \emph{twistor space} associated with a smooth hyperK\"ahler manifold $M$. Subsequently, the work of Hitchin--Karlhede--Lindstr\"om--Ro\v{c}ek \cite{HKLR} provides a reconstruction of the hyperK\"ahler structure of $M$ from holomorphic data defined over its twistor space. Elaborating on \cite{HKLR}, Katzarkov--Pandit--Spaide provided in \cite{KPS} a tentative definition of a hyperK\"ahler structure in the context of derived geometry recently refined by Kryczka--Tannaka--Yau \cite{kryczka&tannaka&yau}.

The twistor space of a hyperK\"ahler manifold $M$ is a complex analytic manifold $\Tw(M)$ equipped with a structural morphism to $\PP^1$, a relative symplectic form on its fibres, and a real structure $\chi$ covering the antipodal map on $\PP^1$. Furthermore, the twistor space is equipped with a $C^\infty$ isomorphism $\Tw(M) \cong M \times \PP^1$, which allows one to recognize as {\it horizontal} those sections of the structural morphism $\Tw(M) \to \PP^1$ whose images are sent to a fixed slice $\{ m \} \times \PP^1$ under the latter.

The aforementioned work of Hitchin--Karlhede--Lindstrom--Ro\v{c}ek \cite{HKLR} allows us to reverse this process. Starting from an analytic variety $\Zz$ over $\PP^1$ equipped with a real structure $\chi$ as above, they propose an intrinsic definition of {\it horizontality} based on the idea that such sections could be deformed without obstruction.

\begin{definition} \label{def horizontal twistor section in analytic varieties} 
Given an analytic smooth fibre bundle $\tau: \Zz \to \PP^1$, we say that a $\chi$-equivariant section $\sigma: \PP^1 \to \Zz$ of $\tau $ is a \textit{horizontal twistor section} if the normal bundle of $\sigma$ is isomorphic to $N_\sigma \cong \Oo_{\PP^1}(1)^{\oplus d}$, with $d = (\dim \Zz - 1)$.
\end{definition}

Note that the above definition implies that $H^1(\PP^1, N_\sigma) = 0$, so, by Kodaira's theorem, $\sigma$ can be deformed unobstructedly along the tangent space $H^0(\PP^1, N_\sigma)$ of dimension $d$. This is one of the crucial steps of the proof of the following statement.

\begin{theorem}[Th. 3.3(ii) of \cite{HKLR}] \label{tm HKLR}
Let $\tau: \Zz \to \PP^1$ be an analytic fibre bundle over $\PP^1$ admitting a family of horizontal twistor sections. Suppose that there exists a relative symplectic structure on the fibres of $\tau$, and a compatible real structure $\chi : \Zz \to \Zz$ inducing the antipodal map on $\PP^1$. Then the parameter space of horizontal twistor sections is a hyperK\"ahler manifold with twistor space $\Zz$.
\end{theorem}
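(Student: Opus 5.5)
This is the classical result of Hitchin--Karlhede--Lindstr\"om--Ro\v{c}ek, and I would follow their strategy, which has four steps.

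\emph{Step 1: the parameter space $M$.} Let $M$ be the space of horizontal twistor sections. For each $\sigma$ we have $N_\sigma \cong \Oo_{\PP^1}(1)^{\oplus d}$, so $H^1(\PP^1,N_\sigma)=0$. By Kodaira's deformation theorem, the space of all sections near $\sigma$ is then a complex manifold $M_\CC$ of dimension $h^0(N_\sigma)=2d$. The conditions $N\cong\Oo(1)^{\oplus d}$ and $\chi$-equivariance are open and closed respectively, so $M$ is a real submanifold of $M_\CC$. Its real dimension is $2d$, since the real structure induces a real form of $H^0(N_\sigma)$.

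\emph{Step 2: complex structures.} Evaluation at $\zeta\in\PP^1$ gives a map $T_\sigma M\otimes\CC = H^0(N_\sigma) \to (N_\sigma)_\zeta$. For $\Oo(1)^{\oplus d}$ this is surjective with kernel $H^0(N_\sigma(-1))\otimes(\text{sections vanishing at }\zeta)$. Restricted to the real form, it is an $\RR$-linear isomorphism $T_\sigma M \cong (N_\sigma)_\zeta \cong T_{\sigma(\zeta)}\Zz_\zeta$. Pulling back the complex structure of the fibre $\Zz_\zeta$ gives $I_\zeta$ on $M$. Writing $H^0(\Oo(1)^{\oplus d}) = \CC^d\otimes\CC^2$, the $I_\zeta$ form the standard quaternionic $2$-sphere, so $M$ is hypercomplex. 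Integrability holds because, for fixed $\zeta$, evaluation $M\to\Zz_\zeta$ is a local diffeomorphism that is holomorphic in the appropriate sense.

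\emph{Step 3: the metric.} The relative symplectic form is an $\Oo(2)$-valued fibrewise form. Evaluating it on sections gives $\omega_\sigma \in H^0(\PP^1,\Oo(2))$ for each pair of tangent vectors, so $\omega(\zeta)=\omega_2+\zeta\omega_{+}+\zeta^2\omega_-$ in an affine coordinate. Compatibility with $\chi$ forces $\omega_1$ to be real and $\omega_+=\overline{\omega_-}$, up to signs. Set $\omega_I=\omega_1$ and $\omega_J+i\omega_K=\omega_+$, then define $g(\cdot,\cdot)=\omega_I(\cdot,I\cdot)$. One checks that $\omega_I,\omega_J,\omega_K$ are Kähler forms for $I,J,K$ with this common metric. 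Closedness comes from the fibrewise closedness of the symplectic form, and each form is of type $(2,0)$ with respect to $I_\zeta$.

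\emph{Step 4: the twistor space of $M$.} The evaluation map $M\times\PP^1\to\Zz$ is a diffeomorphism onto its image (shrinking $M$ if necessary) and holomorphic for the twistor complex structure. So $\Zz$ is identified with the twistor space of $M$.

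The main obstacle is that $g$ need not be positive definite; in general it is only pseudo-hyperkähler. Positivity is where the precise sign conventions of the real structure, and the assumption that the family consists of horizontal sections, have to be used. I would handle this by checking positivity at a single point using the $C^\infty$ splitting $\Zz\cong M\times\PP^1$. Nondegeneracy is an open condition, and the signature cannot change along a connected family, so positivity then propagates to the whole family.
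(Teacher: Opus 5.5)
Your Steps 1--4 follow the HKLR argument. The paper gives no proof of its own: it only cites \cite{HKLR} and singles out the Kodaira unobstructedness step you use in Step 1. These steps do produce a hypercomplex structure with a compatible nondegenerate metric $g$ for which $\omega_I,\omega_J,\omega_K$ are closed, i.e.\ a \emph{pseudo}-hyperK\"ahler structure. (There is a labelling slip in Step 3: the map $\zeta\mapsto-1/\bar\zeta$ exchanges the $\zeta^0$ and $\zeta^2$ coefficients, so it is the $\zeta$-linear coefficient that is real and becomes $\omega_I$.)

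The genuine gap is your final paragraph. The propagation half is fine: $g$ is nondegenerate everywhere, so its signature is locally constant on $M$. But nothing gives you a point where $g$ is positive. The $C^\infty$ splitting $\Zz\cong M\times\PP^1$ is not a hypothesis here. In this converse direction it is exactly the evaluation map you build in Step 4 from the sections themselves, so it carries no information about the sign of $\omega_I$. Likewise, ``horizontal'' in the paper only means $\chi$-equivariant with normal bundle $\Oo_{\PP^1}(1)^{\oplus d}$, and you already used that in Steps 1--2.

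In fact positivity cannot follow from the stated hypotheses. Let $\Zz$ be the total space of $\Oo_{\PP^1}(1)\otimes\CC^4$, with fibre coordinates $(z_1,w_1,z_2,w_2)$ over the affine chart. Give it the real structure $(z_i,w_i,\zeta)\mapsto(\bar w_i/\bar\zeta,\,-\bar z_i/\bar\zeta,\,-1/\bar\zeta)$, which covers the antipodal map, and the $\Oo(2)$-valued relative symplectic form $\Omega=dz_1\wedge dw_1-dz_2\wedge dw_2$. Compatibility with the real structure is $\RR$-linear in $\Omega$ and holds for each $dz_i\wedge dw_i$, so every hypothesis is satisfied. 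The real sections $z_i=\bar e_i-\bar c_i\zeta$, $w_i=c_i+e_i\zeta$ form $\RR^8$. The resulting flat metric has signature $(4,4)$, because $\omega_I$ has opposite signs on the two quaternionic lines. (Replacing $\Omega$ by $-\Omega$ on a genuine twistor space likewise gives a negative definite metric, so no choice of sign convention rescues the argument.)

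So the hypotheses only yield a pseudo-hyperK\"ahler metric, which is how the HKLR theorem is often quoted. To conclude that $M$ is hyperK\"ahler you must add a hypothesis. One option is that $g$ is positive at one real section; your connectedness argument then finishes on that component. This is what is checked by hand at the zero section in constructions of hyperK\"ahler metrics on cotangent bundles. In this paper's setting, the extra input is that $\Tw(\Mm_{\Dol}(C,G))$ is already known to be the twistor space of the Hitchin metric.
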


Based on the previous statement, Katzarkov--Pandit--Spaide \cite{KPS} defined an analogous (tentative) notion of twistor spaces in the framework of derived geometry, recently refined by Kryczka--Tannaka--Yau \cite{kryczka&tannaka&yau}. In their terms, an {\it $m$-shifted derived twistor family of hyperKähler type} is a morphism of derived analytic stacks $\tau : \Zz \to \PP^1$ together with the following data:
\begin{enumerate}

\item ({\it A real structure}:) A homotopy-coherent action of $\mathrm{Gal}(\CC/\RR)$ whose underlying data consists of an automorphism $\chi:\Zz \to \Zz$ covering the antipodal involution of $\PP^1$, a homotopy $\chi^2 \simeq \id_{\Zz}$ compatible with higher coherences.

\item ({\it A symplectic structure}:) A relative $m$-shifted symplectic structure $\Omega$ on the fibers of $\tau$, compatible with the real structure $\chi$. Its underlying $2$-form is a degree $m$ section
\[
\Omega \in \Gamma\!\left( \Zz, \wedge^2 \LL^{\an}_{\Zz/\PP^1} \otimes\tau^*\Oo_{\PP^1}(2) \right),
\]
where $\LL^{\an}_{\Zz/\PP^1}$ denotes the relative cotangent complex. The non-degeneracy condition requires that $\Omega$ induces a quasi-isomorphism
\[
(\LL^{\an}_{\Zz/\PP^1})^\vee
\simeq
\LL^{\an}_{\Zz/\PP^1}[m] \otimes \tau^*\Oo_{\PP^1}(2),
\]
ensuring that every fiber carries an $m$-shifted symplectic structure.

\item ({\it A collection of horizontal twistor sections}:) A (possibly disconnected) union of connected components $\Yy \subseteq \Maps_{\mathbb{P}^1}\left (\PP^1,\Zz \right )^{h \mathrm{Gal}(\CC/\RR)}$ of the homotopy fixed points of the induced action of $\mathrm{Gal}(\CC/\RR)$ on the derived mapping stack of analytic sections of $\tau$, such that Zariski locally there exists an open substack $\Uu \subset \Zz$ whose classical truncation admits a good moduli space $:t_0(\Uu) \longrightarrow \Tw(M)$ for some underlying hyperKähler manifold $M$, and for which the canonical evaluation morphism
\begin{equation} \label{eq derived stack trivialization of twistor}
\Phi:\Yy \times \PP^1 \xrightarrow{\cong} \Uu
\end{equation}
is a $C^\infty$-equivalence compatible with the identification $\Tw(M)\cong M\times\PP^1$.

We remind that this condition involving the good moduli space was considered in \cite{franco&hanson, hanson_1} for the stacks appearing in non-abelian Hodge theory. 
\end{enumerate}

\begin{remark}
In \cite{kryczka&tannaka&yau} the authors adopt a weaker version of \eqref{eq derived stack trivialization of twistor} after passing to the classical truncation of $\Uu$. Here we present the one proposed in \cite{KPS}.
\end{remark}

When $\tau : \Zz \to \PP^1$ is only equipped with (1) and (2) it defines a {\it $m$-shifted derived pre-twistor family of hyperKähler type} in the terminology of \cite{kryczka&tannaka&yau}. In {\it loc. cit.} it is obtained by a shifted symplectic enhancement of the Riemann--Hilbert correspondence, which allows one to equip the (analytification) of the Deligne moduli stack with the structure of ($0$-shifted) derived pre-twistor family of hyperKähler type (recall the real structure $\chi^{\RR}$ constructed in Remark \ref{rm real form on Deligne}).

We would now like to focus on point (3), the \emph{collection of horizontal twistor sections}, whose description is still incomplete in the literature even if some steps were originally given in \cite{franco&hanson, hanson_1}. Our interest comes from the fact that horizontal twistor sections play a central role in the construction of the categories of (BBB)-branes, as we shall address next.

\subsection{Quasi-algebraic (BBB)-branes} 

(BBB)-branes were first introduced in the seminal work of Kapustin--Witten \cite{kapustin&witten}, appearing as objects which are holomorphic with respect to all complex structures of the Hitchin moduli space. These objects play an important role in the physical formulation of mirror symmetry underlying the Geometric Langlands correspondence. From a mathematical perspective, (BBB)-branes may be realized by hyperholomorphic bundles, which lift to holomorphic bundles on the twistor space by means of the Kaledin--Verbitsky correspondence \cite{kaledin&verbitsky}. The second named author, along with Hanson, constructed a dg-category \cite{franco&hanson} containing in its heart the (BBB)-branes provided by lifting the ones originally described over the Deligne moduli space. 

In this section, we first extend the notion of $m$-shifted derived pre-twistor family of hyperKähler type to the quasi-algebraic framework. Then, we propose a candidate for the appropriate collection of horizontal twistor sections, and, making use of the latter, a refinement of the notion of (BBB)-branes previously considered in \cite{franco&hanson, hanson_1}. Our motivation is to describe how we plan, in subsequent work, to extend the functor previously constructed in Section 4 to a more refined one landing in an eventual {\it category of (BBB)-branes}, in agreement with the program initiated by Kapustin and Witten for the representation of boundary conditions. 

Following \cite{KPS, kryczka&tannaka&yau}, we introduce the following notion in the quasi-algebraic setting.

\begin{definition}
A {\it quasi-algebraic $m$-shifted derived pre-twistor family of hyperKähler type} is a morphism of quasi-algebraic stacks $\tau : Z \to \P^1$ together with the following data:
\begin{enumerate}
\item ({\it A real structure}:) A homotopy-coherent action of $\mathrm{Gal}(\CC/\RR)$ covering the antipodal involution $\chi_\P^1 : \P^1 \to \P^1$.

\item ({\it A symplectic structure}:) A $\mathrm{Gal}(\CC/\RR)$-equivariant degree $m$ section
\[
\Omega \in \Gamma\!\left( Z, \wedge^2 \LL_{Z/\P^1} \otimes\tau^*\Oo_{\P^1}(2) \right),
\]
inducing a quasi-isomorphism $(\LL_{Z/\P^1})^\vee
\simeq \LL_{Z/\P^1}[m] \otimes \tau^*\Oo_{\P^1}(2)$.
\end{enumerate}
\end{definition}

Our proposal for the collection of horizontal twistor sections is essentially different from that envisaged by \cite{KPS, kryczka&tannaka&yau} and consists of taking the lifts of horizontal twistor sections on the good moduli space. This is a considerably modest class of horizontal sections to consider, although we do not expect our constructions to change meaningfully if a broader class of horizontal sections were to be proposed.

Consequently, we are required to work with \textit{split} quasi-algebraic stacks, whose classical truncation is equipped with a hyperK\"ahler good moduli space. 

Given a smooth quasi-algebraic scheme endowed with a twistor structure, note that Definition \ref{def horizontal twistor section in analytic varieties} extends automatically to quasi-algebraic schemes. We extend this notion further.

\begin{definition} \label{def lifted horizontal lines}
Let $\tau : Z \to \P^1$ be a quasi-algebraic $0$-shifted derived pre-twistor family of hyperKähler type such that $Z$ is split, {\it i.e. } equipped with a retraction $Z \to t_{0}(Z)$ of the inclusion of its classical truncation, and suppose that there exists an open dense subset $U \subset t_0(Z)$ equipped with a hyperK\"ahler good moduli space $\zeta : U \to \Mm$. Given a horizontal twistor section of the latter, $\sigma : \P^1 \to \Mm$, we define its {\it lift} to be the quasi-algebraic morphism
\[
\wt \sigma : \P^1_\sigma := \P^1 \times_{\Mm} U \to U,
\]
that projects to the second factor. In this case we say that $\wt \sigma$ is a {\it lifted horizontal twistor section}.
\end{definition}

Of course, the primary intended use case of these definitions is the class of quasi-algebraic Deligne moduli stacks with varying structure groups, where the open dense subset $U$ consists of the moduli stack of stable objects on which the nonabelian Hodge correspondence holds. The hyperK\"ahler good moduli space $\Mm$ is then the Deligne moduli space of stable objects. 

\begin{remark}
Observe that, by construction, the lifted horizontal twistor sections $\wt \sigma$ are unobstructed.
\end{remark}

By Theorem \ref{tm HKLR}, the hyperK\"ahler variety $M = \Mm|_{\{0 \} }$ has as its twistor space $\Mm$, and the collection of lifted horizontal twistor sections is parametrized by $M$ itself. Indeed, the family of horizontal twistor sections is {\it indexed} by $M_{\discrete}$, where the latter denotes the set of points of $M$ equipped with the discrete topology. Observe that the set of all horizontal twistor sections provide naturally the existence of a quasi-algebraic morphism,
\[
\Sigma : M_{\discrete} \times \P^1 \to \Mm, 
\]
whose restriction to each slice associated to $m \in M$ amounts to the horizontal twistor section $\sigma :  \P^1 \to \Mm$ indexed by $m$. We then consider the fibre product
\[
\P^1_{\Sigma} := \left ( M_{\discrete} \times \P^1 \right ) \times_{\Mm} U,
\]
consisting on the disjoint union of $\P^1_{\sigma}$ for all $\sigma$ parametrized by $M_{\discrete}$. Consider the natural projections onto each of its terms 
\[
\varphi : \P^1_{\Sigma} \to M_{\discrete}, \quad \beta : \P^1_{\Sigma} \to \P^1 \quad \text{and} \quad
\wt \Sigma : \P^1_{\Sigma} \to Z.
\]
One can think of $\P^1_{\Sigma}$ as a foliation of (a dense subset of) $Z$ whose leaves are the lifted horizontal twistor sections. The idea, inspired from the twistor correspondence (see for instance \cite{kaledin&verbitsky}), is to consider complexes $\Ee \in \QA^!(Z)$ whose pull-back under the two (quasi-algebraic) morphisms coincide. 

Importantly, the description of the relative tangent bundle in Remark \ref{rm Dirac for ad} suggests the following mild but conceptually crucial modification. 

Consider the complex with $0$ differential and having the degree $k$ line bundle in cohomological position $-k$,  
\[
\H = \bigoplus_k \Oo_{\P^1}(k)[k]  
\]
and define the functor
\[
\Xi : \QA^!(Z) \to \QA^!(Z), \quad \Ee \mapsto H^0(\Ee \otimes \tau^*\H) \cong \bigoplus_k H^k(\Ee)[-k].
\]
Observe that $\Xi(\Ee)$ is a complex supported in cohomological degree $0$, and it contains a copy of the cohomology of the original complex $\Ee$, degree-wise twisted by an appropriate line bundle.

\begin{definition}\label{definition ideal twistor category}
Consider the quasi-algebraic $0$-shifted derived pre-twistor family of hyperKähler type $\tau : Z \to \Mm$ with $Z$ split. Recall that $M_{\discrete}$ indexes the collection of lifted horizontal twistor sections, and consider the functors described above. Consider $k \in \frac{1}{2}\ZZ$ a half-integer. The {\it dg-category of hyperholomorphic quasi-algebraic sheaves with helicity $k$} on $Z$, denoted $\BBB^!_k(Z)$, is defined as the homotopy limit of the following solid diagram
    $$
    \begin{tikzcd}
	{\BBB^!_k(Z)} & & & & 
    {\QA^!(Z)} \\
	{\QA^!(M_{\discrete})} & & & & {\QA^!(\P^1_{\Sigma})}
	\arrow[dashed, from=1-1, to=1-5]
	\arrow[dashed, from=1-1, to=2-1]
	\arrow["{\Sigma^! \Xi (\cdot)}", from=1-5, to=2-5]
	\arrow["{\varphi^!(\cdot) \otimes^! \beta^!\Oo_{\P^1}(2k-2)}"', from=2-1, to=2-5]
\end{tikzcd}
    $$
    in dg-categories. We further abbreviate $\BBB^!_0(Z)$ by $\BBB^!(Z)$. 

As $\QA^!(M_{\discrete}) \cong \prod_{M_{\discrete}}\QA^!(\mathrm{pt})$ and $\QA^!(\P^1_{\Sigma}) \cong \prod_{M_{\discrete}}\QA^!(\P^1_\sigma)$, a hyperholomorphic sheaf is then a collection of data $(\Ee, \{\gamma_\sigma\}, \{V_\sigma\})$ where $\Ee$ is a quasi-algebraic quasi-algebraic sheaf on $Z$, along with \textit{horizontal trivialization data}: a complex of vector spaces $V_\sigma \in \QA^!(\mathrm{pt}) = \QA(\mathrm{pt})$ together with an isomorphism,
\[
\gamma_\sigma: \wt \sigma^!\Ee \stackrel{\simeq}{\to} p^!V_\sigma \otimes^! \Oo_{\P^1}(2k-2),
\]
for every lifted horizontal twistor section $\wt \sigma: \P^1 \to Z$ parametrized by $M_{\discrete}$.
\end{definition}

\begin{remark}
The above construction should be viewed as the derived analogue of the classical definition of $(BBB)$-branes which we recover by taking a heart of $\BBB^!_k(Z)$, for an appropriate choice of $k$.
\end{remark}

\begin{remark}
The functor $(\cdot) \otimes^! \tau^!\Oo_{\P^1}(2k-2)$ provides a one-to-one correspondence between objects of $\BBB^!_k(Z)$ and those of $\BBB^!_0(Z)$.
\end{remark}

Following the work of \cite{kryczka&tannaka&yau} the Deligne moduli stack can be equipped with a quasi-algebraic $0$-shifted derived pre-twistor family of hyperKähler type. Being $\Del_G(C)$ split by Remark \ref{rm Del is split} and since the open substack $\Del_G(C)^{\st}$ is naturally equipped with a good moduli structure $\zeta: \Del_G(C)^{\st} \to \Mm_{\Del}(C,G)^{\st}$, one can consider the set of horizontal twistor sections for $\Mm_{\Del}(C,G)^{\st}$ to obtain the corresponding set of lifted horizontal twistor sections. With that, one can build the category of (BBB)-branes, by means of the solid diagram in dg-categories,
\begin{equation} \label{eq solid diagram for Del}
    \begin{tikzcd}
	{\BBB^!_k(\Del_G(C))} & & & & 
    {\QA^!(\Del_G(C))} \\
	{\QA^!(\Mm_{\Del}(C,G)^{\st}_{\discrete})} & & & & {\QA^!(\P^1_{\Sigma})}
	\arrow[dashed, from=1-1, to=1-5]
	\arrow[dashed, from=1-1, to=2-1]
	\arrow["{\Sigma^! \Xi (\cdot)}", from=1-5, to=2-5]
	\arrow["{\varphi^!(\cdot) \otimes^! \beta^!\Oo_{\P^1}(-2)}"', from=2-1, to=2-5]
\end{tikzcd}.
\end{equation}

\begin{example}
Consider the relative tangent complex $\TT_{\Del_{\GG_m}/\P^1}$ and its twist by $\tau^! \Oo_{\P^1}(-1)$,  
\[
\DD_{\ad} = \TT_{\Del_{\GG_m}/\P^1} \otimes^! \tau^! \Oo_{\P^1}(-1),
\]
which amounts to the Dirac--Higgs bundle for the trivial reprensentation.

As an immediate consequence of Remark \ref{rm Dirac for ad}, one can show that both $\TT_{\Del_{\GG_m}/\P^1}$ and $\DD_{\ad}$ are hyperholomorphic quasi-algebraic sheaves, with helicities $1$ and $0$, respectively, {\it i.e.}
\[
\TT_{\Del_{\GG_m}/\P^1} \in \BBB^!_1(\Del_{\GG_m}), \quad \text{and} \quad \DD_{\ad} \in \BBB^!_0(\Del_{\GG_m}).
\]
\end{example}

\subsection{Next steps} \label{subsect next steps}

In this last section we outline the next steps of our program by encapsulating them in a conjecture. 

For technical reasons, we consider a subcategory $\Cc_{\mathrm{qaff}} \subset \Cc$ containig the same objects, but whose 1-morphisms consist only of \textit{quasi-affine} Hamiltonian actions. 
\begin{conjecture} \label{cj B-twist}
Let $G$ be a reductive group, and $C$ a smooth projective curve. There exists a collection of horizontal twistor sections on $\Del_G(C)$, containing those of Definition \ref{def lifted horizontal lines}, equipping $\Del_G(C)$ with the structure of a quasi-algebraic twistor family of hyperKähler type. With respect to this twistor structure, there exists a representation of the polarized \textit{quasi-affine} Moore--Tachikawa category 
\[
B : \Cc_{\mathrm{qaff}} \to \mathrm{dgCat}
\]
sending the objects of $G \in \Cc_{\mathrm{qaff}}$ to 
\[
B(G) := \BBB^!(\Del_G(C)), 
\]
lifting the functor $\bB$ of Theorem \ref{th representation of Cc} along the functor $\BBB^!(\Del_G(C)) \to \QA^!(\Del_G(C))$ which forgets horizontal trivializations.
\end{conjecture}
\begin{remark}
    The restriction to $\Cc_{\mathrm{qaff}} \subset \Cc$ appears to be necessary. In examples, we can observe that if the Hamiltonian space contains nontrivial rational curves, it would not be possible in general to equip the resulting quasi-algebraic structures with horizontal trivialization data. There are several ways to address this problem, but we will leave this discussion to future work.
\end{remark}
Observe that the functor $\bB : \Cc \to \mathrm{dgCat}$ constructed in Theorem \ref{th representation of Cc} amounts to the top-right corner of the diagram \eqref{eq solid diagram for Del}. In subsequent work, we equip $\bB|_{\Cc_{\mathrm{qaff}}}$ with horizontal trivializations to obtain the desired functor of the preceding conjecture.

\end{document}